\documentclass[12pt,reqno,oneside]{article}

\usepackage[letterpaper,hmargin=1in,vmargin=1in]{geometry}
\usepackage{amsmath}
\usepackage{amsthm}
\usepackage{amssymb}
\usepackage{mathtools}
\usepackage{fourier}
\usepackage{mathrsfs}
\usepackage{tikz}
\usetikzlibrary{
  matrix,
  arrows,
  arrows.meta,
  angles,
  shapes,
  backgrounds,
  fit,
  cd,
  calc,
  positioning,
  intersections,
  decorations.markings,
  decorations.pathmorphing,
  backgrounds,patterns,
  decorations.pathreplacing}
\usetikzlibrary{external}
\usepackage{pgfplots}
\usepackage{tikz-3dplot}
\usetikzlibrary{
  decorations.fractals,
  spy,
  pgfplots.colorbrewer,
  pgfplots.fillbetween
}

\pgfplotsset{
  compat=1.18,
  title style={align = center},
  y tick label style={/pgf/number format/.cd,fixed,precision=5,/tikz/.cd},
  scaled y ticks = false,
  cycle list/Paired,
  axis lines=left
}

\usepackage[hidelinks,breaklinks]{hyperref}
\usepackage{breakurl}
\usepackage{xcolor}
\definecolor{darkgreen}{rgb}{0,0.45,0}
\hypersetup{colorlinks,urlcolor=blue,citecolor=darkgreen,linkcolor=darkgreen,linktocpage}
\usepackage[capitalize]{cleveref}
\usepackage[style=alphabetic,maxnames=10,backend=bibtex,hyperref=true]{biblatex}
\addbibresource{references.bib}
\usepackage{authblk}
\usepackage{comment}
\usepackage{ifthen}
\usepackage{multicol}
\usepackage[justification=centering]{caption}

\mathtoolsset{mathic=true}

\makeatletter
\renewcommand\subsubsection{\@startsection{subsubsection}{3}{\z@}%
                                     {-3.25ex\@plus -1ex \@minus -.2ex}%
                                     {-1.5ex \@plus -.2ex}
                                     {\normalfont\normalsize\bfseries}}
\makeatother

\numberwithin{equation}{section}
\newtheorem{definition}{Definition}[section]

\newtheorem{lemma}[definition]{Lemma}
\newtheorem{proposition}[definition]{Proposition}
\newtheorem{corollary}[definition]{Corollary}
\newtheorem{theorem}[definition]{Theorem}
\newtheorem{conjecture}[definition]{Conjecture}

\theoremstyle{remark}
\newtheorem{remark}[definition]{Remark}
\newtheorem{example}[definition]{Example}

\newcommand{\define}[1]{\textbf{#1}}

\newcommand{\norm}[1]{\left\lVert#1\right\rVert}
\DeclareMathOperator{\Mid}{mid}


\newcommand{\R}{\mathbb{R}}

\newcommand{\C}{\mathcal{C}}
\newcommand{\D}{\mathcal{D}}



\newcommand{\vect}{\mathbf{Vec}}

\DeclareMathOperator{\End}{End}




\newcommand{\id}{\mathsf{id}}





\DeclareMathOperator{\cost}{cost}


\newcommand{\I}{\mathsf{I}}

\newcommand{\di}{d_{\I}}












\newcommand{\Int}{\mathbf{Int}}

\newcommand{\cL}{\mathcal{L}}
\newcommand{\subR}{\mathcal{L}(\mathbb{R})}
\newcommand{\sub}{\mathbf{Sub}}
\newcommand{\mon}{\mathbf{Mon}}
\newcommand{\powR}{\mathcal{P}(\mathbb{R})}
\newcommand{\cG}{\mathcal{G}}
\newcommand{\cH}{\mathcal{H}}

\newcommand{\snap}{\mathcal{S}}

\DeclareMathOperator{\diam}{diam}

\DeclareMathOperator{\n}{\mathbf{n}}
\DeclareMathOperator{\e}{\mathbf{e}}

\DeclareMathOperator{\Mat}{\mathsf{Mat}}
\DeclareMathOperator{\Bool}{\mathsf{Bool}}
\DeclareMathOperator{\Fun}{\mathsf{Fun}}
\DeclareMathOperator{\Path}{\mathsf{Path}}



\usepackage{comment}


\def\noteson{
    \gdef\note##1{\noindent{\color{blue}[##1]}}
    \gdef\todo##1{\noindent{$\dagger$ {\color{red}##1} $\dagger$}}
    \gdef\robert##1{\noindent{Robert : \color{green!40!black}[##1]}}
    \gdef\jacob##1{\noindent{Jacob : \color{red!80!black}[##1]}}
    \gdef\justin##1{\noindent{Justin : \color{green!40!black}[##1]}}
    \gdef\robby##1{\noindent{Robby : \color{green!40!black}[##1]}}
    \gdef\alan##1{\noindent{Alan : \color{green!40!black}[##1]}}
    \gdef\tung##1{\noindent{Tung : \color{green!40!black}[##1]}}
    \gdef\brendan##1{\noindent{Brendan : \color{green!40!black}[##1]}}
    \gdef\bob##1{\noindent{Bob : \color{green!40!black}[##1]}}
    \gdef\prov##1{{\color{gray} ##1}} 
    \gdef\precise##1{\noindent{\color{gray} ##1}}
}

\noteson

\everymath{\displaystyle}

\title{Algebraic and Geometric Models for Space Networking}

\author[1]{William Bernardoni}
\author[2]{Robert Cardona}
\author[3, 5]{Jacob Cleveland}
\author[2]{Justin Curry}
\author[2]{Robert Green}
\author[2]{Brian Heller}
\author[4]{Alan Hylton}
\author[2]{Tung Lam}
\author[5]{Robert Kassouf-Short}

\affil[1]{Case Western Reserve University}
\affil[2]{University at Albany, State University of New York (SUNY)}
\affil[3]{Colorado State University}
\affil[4]{NASA Goddard Space Flight Center}
\affil[5]{NASA Glenn Research Center}

\begin{document}

\maketitle

\begin{abstract}
In this paper we introduce some new algebraic and geometric perspectives on networked space communications. Our main contribution is a novel definition of a time-varying graph (TVG), defined in terms of a matrix with values in subsets of the real line $\powR$. We leverage semi-ring properties of $\powR$ to model multi-hop communication in a TVG using matrix multiplication and a truncated Kleene star. This leads to novel statistics on the communication capacity of TVGs called lifetime curves, which we generate for large samples of randomly chosen STARLINK satellites, whose connectivity is modeled over day-long simulations.
Determining when a large subsample of STARLINK is temporally strongly connected is further analyzed using novel metrics introduced here that are inspired by topological data analysis (TDA).
To better model networking scenarios between the Earth and Mars, we introduce various semi-rings capable of modeling propagation delay as well as protocols common to Delay Tolerant Networking (DTN), such as store-and-forward.
Finally, we illustrate the applicability of zigzag persistence for featurizing different space networks and demonstrate the efficacy of K-Nearest Neighbors (KNN) classification for distinguishing Earth-Mars and Earth-Moon satellite systems using time-varying topology alone.
\end{abstract}
\newpage
\tableofcontents




\subsubsection*{Funding Acknowledgement:}
This work is a product of NASA Contract 80GRC020C0016.

\section{Introduction and Outline}

As humanity embarks on its next steps in space exploration, with international and commercial actors providing a huge influx of new space assets, the need to automate and scale space communications has become increasingly pressing.
Current communication in space---between, say, a rover on Mars and a particular building on Earth---is handled by teams of engineers manually scheduling which point-to-point links are to be used at what times to provide end-to-end transmission of data. 
Once a message reaches a ground station on Earth traditional terrestrial networking theory, such as TCP/IP, takes over, which relies heavily on low latency and a largely static network architecture.
However, neither of these requirements hold in space. 
Special relativity dictates that all communication is constrained by the speed of light, which means the fastest possible one-way communication between Earth and Mars is constrained to 3 and 22 light-minutes, depending on their relative positions in orbit.
Networking topology can also evolve rapidly, as illustrated by satellites in low-Earth orbit (LEO), whose orbits are typically 90 minutes and a random pair of satellites may have a line-of-sight connection for only a few minutes.
In all these scenarios occlusions typically force a break in point-to-point communication, which is handled by either local storage or the use of an alternate route that circumvents the occlusion.

Motivated by these problems, we introduce here a novel set of tools for modeling time-varying networks that appear in actual space networking situations.
Our main contribution is a clear definition of a time-varying graph (TVG), which we view as a matrix whose entries are populated by subsets of time.
We exploit the fact that subsets of time, viewed as elements of $\powR$, have a natural notion of addition and multiplication given by union and intersection, thereby making $\powR$ into a semi-ring; see \Cref{defn:semi-ring}.
With this observation in hand, we model end-to-end communication capacity more accurately by considering the Kleene star (\Cref{def:kleene_star}) of our TVG adjacency matrix:
\[
A^*:= I + A + A^2 + \cdots + A^k + \cdots
\]
Since addition in the above equation represents entry-wise union of times of connectivity, the partial series sum $C_k(A):=I+\cdots + A^k$ provides an interesting filtration parameter that accumulates windows of opportunity along walks of length $k$ or less.
By considering the average measure of each entry $C_k(A)_{i,j}$, measured over some fixed window of time $W\subset \R$, we obtain a novel statistic that we call the \emph{lifetime curve} in \Cref{lem:lifetime-curves}, which we use as a proxy to measure how close a TVG is to a strongly connected one\footnote{An ordinary directed graph is strongly connected if you can go from any node to any other node. A TVG is then strongly connected if you can go from any node to any other node at any time.}.

We implement these ideas in code (available at \url{github.com/TheaMAS/sat-parser}) and simulate various networking scenarios with the help of Satellite Orbital Analysis Program (SOAP), which is a tool for accurately simulating orbital mechanics and calculating windows of opportunity for line-of-sight communication\footnote{For our analysis, all possible line-of-sight opportunities are considered simultaneously valid. In practice, a given node might be able to only establish one link at time, meaning choosing one contact necessarily precludes others. This, among other considerations, should lead to interesting and well-defined future research topics.}.
By using the large database of STARLINK satellites available on \url{celestrak.org}, we simulate random LEO networks by taking different size samples from the STARLINK network.
As illustrated in \Cref{fig:lifetime-curves-sample} these lifetime curves can have radically different shapes, but they seem to undergo a clear phase transition when the number of nodes exceeds $n=40$, suggesting that more than 40 nodes are needed to ensure strong connectivity in a LEO network.
The shapes of these curves are currently not well understood and motivate further mathematical research into their structure, cf. \Cref{conjecture:r=3}. 

\subsection{Outline for the Applications-First Reader}

For the reader who is primarily interested in the immediate application of our methods to space networking, and in particular how to measure the difference between near-Earth and deep-space communication networks, we advise that they proceed directly from the beginning of \Cref{sec:sem_rng_mod} (including \Cref{sec:semi-ring-matrix} and \Cref{sec:lifetime-curves}) to the start of \Cref{sec:TDA-and-metrics}.
There we continue the question of how close a given TVG is to a strongly connected one, by introducing a bona fide distance on TVGs that is inspired by topological data analysis (TDA).
Here we leverage the perspective that times when an edge exists (or does not exist) can be represented as a collection of intervals, assuming the TVG is not too pathological in its connectivity. 
Such intervals can be represented using a persistence diagram and there are many well-defined distances on persistence diagrams.
To emphasize that the distances in \Cref{sec:distances-fixed-node} and \Cref{sec:distances-w-unknown-node} are not being used on the output of a traditional persistent homology pipeline---because no homology is being taken---we call these distances the \emph{disconnect distances} in \Cref{def:p-q-disconnect-distance}.
We then use these distances to measure more carefully the connectivity properties of sub-samples of STARLINK, shown in \Cref{fig:STARLINK-sample-distances}, which shows that 100 nodes is more likely needed to establish strong connectivity.
Moving beyong STARLINK, we illustrate how to compare Earth-Moon and Earth-Mars systems using zigzag persistence \cite{Carlsson:zigzag:2009,carlsson2010zigzag}, which is another tool borrowed from TDA that summarizes how network topology varies over time, and is agnostic to node labels. We show that, when our TVGs are featurized using $H_1$ zigzag persistence barcodes, how a K-Nearest Neighbor (KNN) classifier can be used to automatically distinguish types of space networks. As outlined in \Cref{sec:future-dirs} we take this as the first step in creating an automatic recommendation and segmentation protocol for space internet that uses machine learning.

\subsection{Outline for the Applied Topology Reader}

For the reader who is interested in how our work expands the theory of TDA, we recommend proceeding directly from \Cref{sec:lifetime-curves} and reading the entirety of \Cref{sec:TDA-and-metrics}.
Our definition of TVGs leads naturally to a summary cosheaf (\Cref{def:summary-cosheaf}) that refines the Reeb cosheaf of \cite{crg} by tracking the entire graph structure and not just the connected components.
However, similar to \cite{crg} we adapt the interleaving distance construction to this setting and prove some novel isometry results (\Cref{thm:isometries}) that are very close in spirit to the results on merge trees proved in \cite{gasparovic2019intrinsic}.
Our summary cosheaf also provides a novel pipeline for proving stability of the zigzag barcodes under the aforementioned metrics, see \Cref{prop:TVG-barcode-stability}. 

\subsubsection*{Prior Work in TDA on Time-Varying Graphs}

Zigzag persistence \cite{Carlsson:zigzag:2009,carlsson2010zigzag}, which is a common tool from TDA, has been used to study time-evolving topology in dynamic networks for several years. 
At a high-level, zigzag persistence reveals the formation, disappearance and duration of topological features of a space $X$ that is parameterized by $\R$, i.e., by using a function $f:X\to\R$. 
To understand how topology at one time $t_0$ is related to the topology at another time $t_1$, one typically considers the alternating inclusion
$f^{-1}(t_0) \hookrightarrow f^{-1}[t_0,t_1] \hookleftarrow f^{-1}(t_1)$
and takes homology, which is what gives zigzag persistence its name.
In order to analyze temporal topological features via zigzag persistence, a collection of snapshots of a temporal network are considered together with their zigzag unions (or intersections) to form a sequence of simplicial complexes. 
Network snapshots can be obtained via a sliding window or temporal partitioning construction\cite{Lozeve:2018}. 
This approach is used in \cite{myers2023temporal} to analyze the Great Britain transportation network, for example. 
A similar approach, which is used in \cite{temporalhypergraph:2023} to analyze the social networks and cyber data of various communities, considers situations where the network snapshots are hypergraphs. 
In the context of space communication, the study \cite{Hylton2022Survey} uses zigzag persistence on simulated space networks to identify and extract subnetwork structures in order to reduce the complexity of the network.
A more theoretical approach to dynamic graphs, carried out by Kim and M{\' e}moli \cite{kim2017extracting}, uses the M\"obius inversion perspective on persistence to define a novel summary of time-evolving clusters called the \emph{persistence clustergram}.

\subsection{Outline for the Applied Algebra Reader}

Finally, for the reader most interested in the semi-ring aspects of our work, we encourage them to read \Cref{sec:sem_rng_mod} in its entirety.
Even the more applied reader will benefit from seeing how our semi-ring model for TVGs allows us to model propagation delay in \Cref{sec:propagation-delay}.
However, the most theoretically-minded should turn their attention to \Cref{sec:UCS}, as this provides a sort of ``universal'' semi-ring that allows us to model all known aspects of graph optimization problems, including tropical geometry.
From the applied perspective, this section is most significant for its ability to model store-and-forward routing behavior, which is one of the most popular protocols in delay tolerant networking.

\subsubsection*{Prior Work on Algebraic Path Problems}

To our knowledge, the semi-ring models of time-varying graphs advanced here have not been considered before.
In particular, we believe that the matrix TVG semi-ring, the propagation delay semi-ring, and the universal contact semi-ring are all new, with no clear presence in the literature.
We say this with much trepidation as the semi-ring perspective on networks has at least a 50 year history, with Carr\'e's pioneering work \cite{Carre1971} as one of the first major papers. 
This perspective has endured, with textbook length treatments in \cite{Carre1981,heidergott2006max,gondran2008graphs} and \cite{Baras2010}. 

In all these works, one wants to view any network characterization or optimization problem as a matrix with entries in a carefully chosen semi-ring.
Solutions to these characterization/optimization problems are usually calculated via the transitive closure of that matrix, i.e., the Kleene star.
For instance, and this is expanded on in more detail in \Cref{sec:semi-ring-review-graph-optimization}, the shortest path problem on a network can be formulated use the tropical semi-ring $([0, \infty], \min, +)$. 
One compares the costs along all paths using the $\min$ (addition) operation, while each path cost is the result of aggregating arc weights along the path using the $+$ (multiplication) operation. 
The Kleene star for this matrix then consists of the lengths of pairwise shortest paths between nodes in networks, thus solving the all-pairs shortest-path problem with remarkable algebraic efficiency \cite{Mehryar2002}.
As Carr\'e observed in \cite{Carre1971}, these matrix-theoretic methods provably generalize direct methods of solving routing problems such as Bellman-Ford's and Floyd–Warshal's algorithms.
These observations were continued by \cite{Minoux1976}, who showed that generalized path algebras can be used to obtain the well-known pathfinding algorithms by Moore, Ford, and Dijkstra; see \cite{minoux:1998,minoux:1998b} for more on this.

Surprisingly, our application of semi-rings to the internet-at-large is not new, as the works \cite{griffin2005metarouting,sobrinho2003network, sobrinho2001algebra, sobrinho2005dynamic, griffin2008bisemigroup, dynerowicz2013forwarding} study internet routing protocols from this perspective.
However each of these works are concerned with so-called ``closed networks,'' where the nodes and edges are fixed from the outset.
More recent work, coming out of the Applied Category Theory (ACT) community, has developed a more flexible theory for networks that allows for node discovery and for routing between other, unknown networks; see the works of Jade Master \cite{master:2020, master:2022, master:thesis}, whose thesis \cite{master:thesis} made major headway in the theory of these so-called ``open networks.''  We anticipate that this work will be important in future work on developing generalized, composable algorithms for routing in space.

\section{Algebraic Models for Time-Varying Graphs (TVGs)}\label{sec:sem_rng_mod}

Time-varying graphs (TVGs) have been studied extensively by many different groups of people, but there is no single agreed-upon definition of what a TVG should be.
One model of a TVG is that a time-varying graph is simply a graph sequence $G_0, \ldots, G_n$, but this perspective obscures relationships across time.
Another model for a TVG (or temporal graph) \cite{bumpus2022edge}, which addresses this criticism, argues that a TVG is a graph $G=(V,E)$ equipped with a function $\tau:E \to 2^{\mathbb{N}}$ that specifies which indices in a graph sequence an edge lives.

\begin{figure}[h!]
    \centering
    \includegraphics[width=\textwidth]{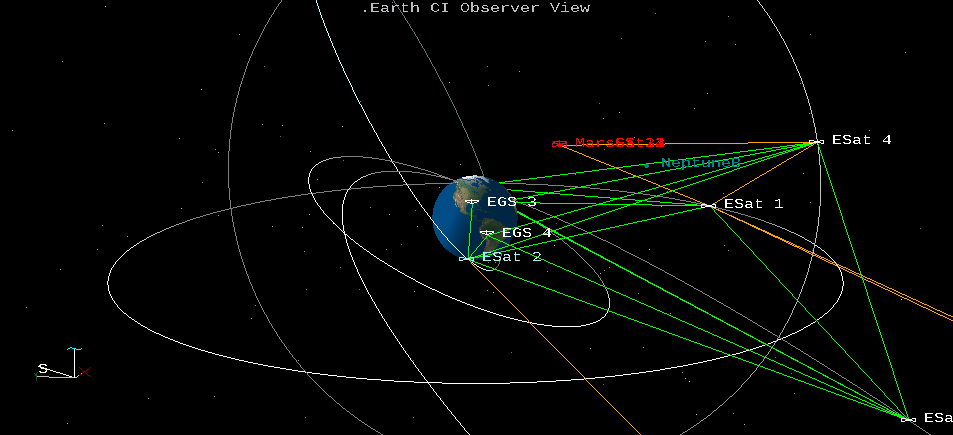}
    \caption{A screenshot from the Sattelite Orbital Analysis Program (SOAP) illustrates lines of sight between ground stations and satellites around Earth and Mars.}
    \label{fig:SOAP-Earth-Crop-1}
\end{figure}

Both of these perspectives are inappropriate for the purposes of space networking, where we are primarily interested in when two assets (ground stations, rovers, satellites, etc.) have a clear line of sight for communication; see \Cref{fig:SOAP-Earth-Crop-1}.
Due to orbital mechanics, each line of sight in a space network starts at a \emph{sunrise} time and ends with a \emph{sunset} time, which together marks the boundary of a single interval of connectivity.
Since the metric properties of these intervals---\emph{when} and \emph{for how long?}---are crucial for determining when and how much data can be routed across our network, we introduce a framework for talking about lifetimes of connections. It will be useful to organize the collection of connection lifetimes into a poset structure organized by inclusion:

\begin{definition}[Poset of Lifetimes]\label{def:lifetime-poset}
Let $\subR$ be the \define{poset of lifetimes}. 
A non-trivial element $a\in \subR$ is 
a finite union of disjoint closed intervals
\[
a = [x_0,y_0] \cup \cdots \cup [x_n,y_n],
\]
where $x_0\in \R \cup \{-\infty\}$ and $y_n\in \R\cup \{+\infty\}$, with $x_n\leq y_n \leq x_{n+1}$ for all $n$. 
The partial order on lifetimes is given by inclusion of subsets, i.e., $a\preceq b$ if $a\subseteq b$.
\end{definition}

\begin{figure}
    \centering
    \includegraphics[width=\textwidth]{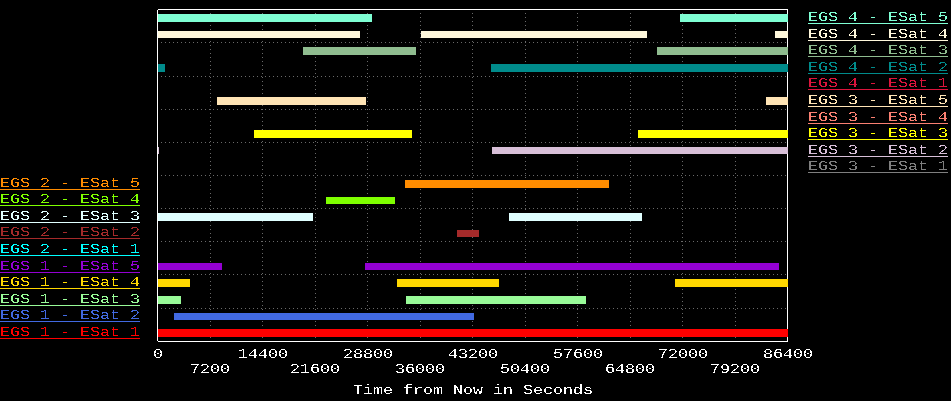}
    \caption{Sunrise and sunset times for line of sight communication in \Cref{fig:SOAP-Earth-Crop-1} define intervals of connectivity, which is what our definition of a TVG emphasizes.}
    \label{fig:SOAP-Earth-Crop-2}
\end{figure}


Our definition of a time-varying graph is a graph that is equipped with an order-reversing map to the poset of lifetimes $\subR$. 
More precisely, we have the following definition.

\begin{definition}[TVG]\label{defn:TVG}
Let $G$ be a graph, i.e., a set of edges $E$ and a set of vertices $V$ with an incidence relation $v<e$ that indicates when a vertex $v$ belongs to an edge $e$.
A \define{time-varying graph (TVG)} $\cG=(G,\ell_M)$ is a graph $G$ along with an order-reversing \define{lifetime function} 
\[
\ell_M:(G,<)^{op} \to \subR, \quad \text{i.e.} \quad \text{if} \quad v <e \quad \Rightarrow \quad  \ell_M(e) \subseteq \ell_M(v).
\]
The assumption that $\ell_M(e) \subseteq \ell_M(v)$ is the \define{containment axiom}, as it requires that a vertex be alive whenever an edge is alive.
\end{definition}


\begin{remark}
We remark that any adjective for graphs descends to a modifier on time-varying graphs. 
For example, \Cref{defn:TVG} generalizes to hypergraphs as it only requires the notion of incidence (or containment) of vertices into hyperedges; see \cite{temporalhypergraph:2023}, which uses temporal attribution for hypergraph applications. 
Additionally, if $G$ is a directed multigraph, then we can define $v<e$ iff $v$ is the head or tail of the edge $e$, but directedness of an edge $e$ that goes from $i$ to $j$. 
Finally, recall that a (directed) graph is \define{simple} if there exists at most one (directed) edge between any two vertices.
\end{remark}


For simplicity, we will work with an alternative matrix formulation for TVGs, which can be viewed as the primary definition for this paper. Note that in this approach, the poset is specialized to a total order.

\begin{definition}[Lifetime Matrices and General Matrix TVGs]\label{defn:matrix-TVG}
Every simple directed TVG ${\cG=(G,\ell_M)}$ that is equipped with a total order on its vertex set $V$ has a representative \define{matrix of lifetimes}:
\[
M : V\times V \to \subR \qquad (i,j) \mapsto M(i,j)=\ell_M(i,j) \subseteq \R.
\]
The collection of all lifetime matrices, written $\Mat_{n}(\subR)$, includes every possible matrix with entries in $\subR$, without any containment axiom.
We call a matrix whose entries are arbitrary subsets of $\R$, i.e., $M\in \Mat_{n}(\powR)$, a \define{matrix TVG}.
\end{definition}

\Cref{defn:matrix-TVG} should remind the reader of the adjacency matrix in graph theory, which for a simple graph has $0$s along the diagonal and $1$s whenever an edge from $i$ to $j$ exists. 
One important difference is that we typically assume that a lifetime matrix $M$ has $\R$s along the diagonal, which, by analogy would say that every vertex has a self-loop sitting over it. This is a reasonable interpretation in the setting of message passing across a time-evolving graph, but has some drawbacks when trying to emulate (or generalize) more traditional constructions in graph theory.
To allow ourselves to work with both conventions, we introduce the following definition.

\begin{definition}[Adjacency Matrix of a TVG]\label{defn:adjacency-matrix-TVG}
The \define{adjacency matrix} of a simple directed TVG $\cG=(G,\ell_M)$, written $A$, is identical to the associated matrix TVG $M$ except that $A_{ii}=\varnothing$.
\end{definition}

\subsection{Semi-Ring and Matrix Perspectives on Graphs and TVGs}\label{sec:semi-ring-matrix}

One of the important reasons for working with matrices is that certain operations---such as addition and multiplication---are available on matrices, which are not obvious for graphs.
In this section we recall the interpretation of simple directed graphs as a matrix of Booleans and how the higher powers of the matrix of Booleans models walks of corresponding length.
This then clears the way for defining and interpreting these matrix operations for TVGs.
At the heart of both of these perspectives is the observation that addition and multiplication are very general operations and are formally unified in the language of semi-rings, which are rings with negatives removed.

\begin{definition}\label{defn:semi-ring}
A \define{semi-ring} $(S,\oplus,\otimes,\n,\e)$ consists of a set $S$ with an addition operation $\oplus$ and a multipliciation operation $\otimes$ along with neutral elements $\n$ (the ``0'' element) and $\e$ (the ``1'' element) for these operations. These operations need to further satisfy the following four collections of identities:
\begin{enumerate}
\item $(S,\oplus,\n)$ is a commutative monoid (see \Cref{def:monoid}) with identity element $\n$. This means:
\begin{itemize}
\item $(a\oplus b)\oplus c=a\oplus (b\oplus c)$
\item $a \oplus b = b \oplus a$
\item $\n\oplus a=a=a\oplus \n$
\end{itemize}
\item $(S,\otimes)$ is a (not necessarily commutative) monoid with identity element $\e$. This means
\begin{itemize}
\item $(a\otimes b)\otimes c=a\otimes (b\otimes c)$
\item $\e\otimes a=a=a\otimes \e$
\end{itemize}
\item Multiplication distributes over addition on both the left and right:
\begin{itemize}
\item $a\otimes (b\oplus c)=(a\otimes b)\oplus(a\otimes c)$
\item $(a\oplus b)\otimes c=(a\otimes c)\oplus(b\otimes c)$
\end{itemize}
\item Multiplication by $\n$ (the ``$0$'' element) annihilates $S$:
\begin{itemize}
\item $\n \otimes a=\n = a \otimes \n$
\end{itemize}
\end{enumerate}
\end{definition}

\begin{remark}
    Since semi-rings are rings with negatives removed, some authors prefer to call semi-rings \textbf{rigs}---because they are ``rings'' with the ``n'' (as in ``negatives'') removed.
\end{remark}

The prototypical example of a semi-ring is the set of natural numbers $\mathbb{N}=\{0,1,2,\ldots\}$ with normal addition and multiplication.
We will work primarily with the following semi-rings, which are pertinent to the study of TVGs.

\begin{definition}[Boolean Semi-Ring]\label{defn:boolean-semi-ring}
 Consider the set $\Bool=\{\bot,\top\}$ with $\n=\bot=0$ corresponding to \texttt{FALSE} and $\e=\top=1$ corresponding to \texttt{TRUE}. If we define $\oplus=\vee$ to be the logical \texttt{OR} operation and $\otimes=\wedge$ to be the logical \texttt{AND} operation, then $(\Bool,\vee,\wedge,\bot,\top)$ defines the \define{Boolean semi-ring}.
\end{definition}



\begin{definition}[Path Semi-Ring]\label{defn:path-semi-ring}
    Suppose $G=(V,E)$ is a simple directed graph. 
    Let $\Path(G)$ be the set of formal combinations of paths (or \define{walks}) in $G$ of arbitrary length, i.e., 
    \[\Path(G) =\{ \sum_i \gamma_i \mid \gamma_i=[v_{i_0}, \ldots, v_{i_n}], \quad \text{where} \quad \forall i,j \, (v_{i_j}, v_{i_{j+1}}) \in E \}.
    \]
    If $a=\sum_i \gamma_i$ and $b=\sum_j \gamma_j$, then $a+b$ is the Boolean sum (i.e., set-theoretic union) of the paths in $a$ with the paths in $b$.
    The additive neutral element $\n$ is the empty collection of paths $\varnothing$.
    For multiplication, we first define concatenation of paths $\gamma_i\ast \gamma_j$ to be the extension of the path $\gamma_i$ by $\gamma_j$ if the last vertex in $\gamma_i$ is the first vertex in $\gamma_j$, otherwise it is $\varnothing$; notice this multiplication is not commutative.
    Extending linearly allows us to define 
    \[
    a\ast b =(\sum_i \gamma_i)\ast (\sum_j \gamma_j) = \sum_{ij}\gamma_i \ast \gamma_j, \quad \text{where} \quad \e=1= \sum_{v_i\in V} [v_i]
    \]
    is a multiplicative neutral element.
\end{definition}


\begin{definition}[Lifetime and Powerset Semi-Ring]\label{def:lifetime-semi-ring}
    The poset of lifetimes $\subR$ (\Cref{def:lifetime-poset}) is a commutative semi-ring. This is actually a sub-semi-ring of the bigger semi-ring consisting of the powerset of $\R$, written $\powR$. 
    Addition and multiplication are defined as 
\begin{itemize}
    \item $a + b := a \cup b$, with neutral element $\n=\varnothing$ being the empty set, and
    \item $a\cdot b:=a \cap b$, with neutral element $\e=\R$ being the whole set $\R$.
\end{itemize}
More generally, the powerset $\mathcal{P}(X)$ of any set $X$ forms a commutative semi-ring with the same operations and analogous neutral elements.
\end{definition}

\begin{remark}[Idempotency and Partial Orders]\label{rmk:idempotent}
    Semi-rings such as $\Bool$ and $\mathcal{P}(X)$ have the additional property of being \define{idempotent}, i.e., for all elements $a$ we have $a+a=a$.
    Idempotent semi-rings give rise to a natural partial order, where
    \[
        a \preceq b \iff \exists c \text{ s.t. } a+c = b.
    \]
\end{remark}

\begin{definition}[Function Semi-Ring]
    Given any set $X$ and any semi-ring $(S,\oplus,\otimes,\n,\e)$, the set of functions from $X$ to $S$, written $\Fun(X,S)$, inherits the structure of a semi-ring. Given $m,n:X \to S$,
    \begin{itemize}
        \item we have $m\oplus n: X \to S$, where $(m\oplus n)(x):=m(x)\oplus n(x)$, and
        \item $m \otimes n : X \to S$, where $(m\otimes n)(x):=m(x) \otimes n(x)$.
    \end{itemize}
    The ``0'' function $0:X\to S$ with constant value $\n$ and the ``1'' function $1:X\to S$ with constant value $\e$ are the corresponding neutral elements.
\end{definition}

\begin{remark}[Boolean Functions and Subsets]\label{rmk:boolean-subsets}
    There is a close relationship between the semi-ring of subsets $\mathcal{P}(X)$ and the semi-ring of Boolean functions, denoted by $\Fun(X,\Bool)$.
    In fact, there is a map $\Phi$ that takes each subset $A\subseteq X$ to the indicator function $1_A:X \to \Bool$; this being the function that is $\top$ on points in $A$ and $\bot$ on points in $X - A$.
    This map also preserves addition and multiplication, i.e., 
    \begin{itemize}
        \item $\Phi(A\cup B)=1_{A \cup B}=1_A\oplus 1_B=\Phi(A)\oplus\Phi(B)$, and
        \item $\Phi(A\cap B)=1_{A\cap B} = 1_A\otimes 1_B=\Phi(A)\otimes \Phi(B)$.
    \end{itemize}
    This map $\Phi$ also preserves the neutral elements of both semi-rings, i.e., $\Phi(\varnothing)=0$ and $\Phi(X)=1$.
    All of this makes $\Phi$ a surjective \define{semi-ring homomorphism} (\Cref{def:semi-ring-homomorphism}) with trivial kernel, which is a \define{semi-ring isomorphism}.
\end{remark}

Finally, matrices valued in a semi-ring define another semi-ring.

\begin{definition}[Matrix Semi-Ring]\label{def:matrix-semi-ring}
    If $(S,\oplus,\otimes,\n,\e)$ is a semi-ring, then the collection of $n\times n$ matrices with entries in $S$, written $\Mat_{n}(S)$, is a semi-ring as well where $M+N$ is defined entry-wise as $(M+N)_{ij}:=M_{ij}\oplus N_{ij}$ and $(MN)_{ij}=\oplus_{k} M_{ik}\otimes N_{kj}$.
\end{definition}

\begin{figure}[h!]
    \centering
    \includegraphics[width=\textwidth]{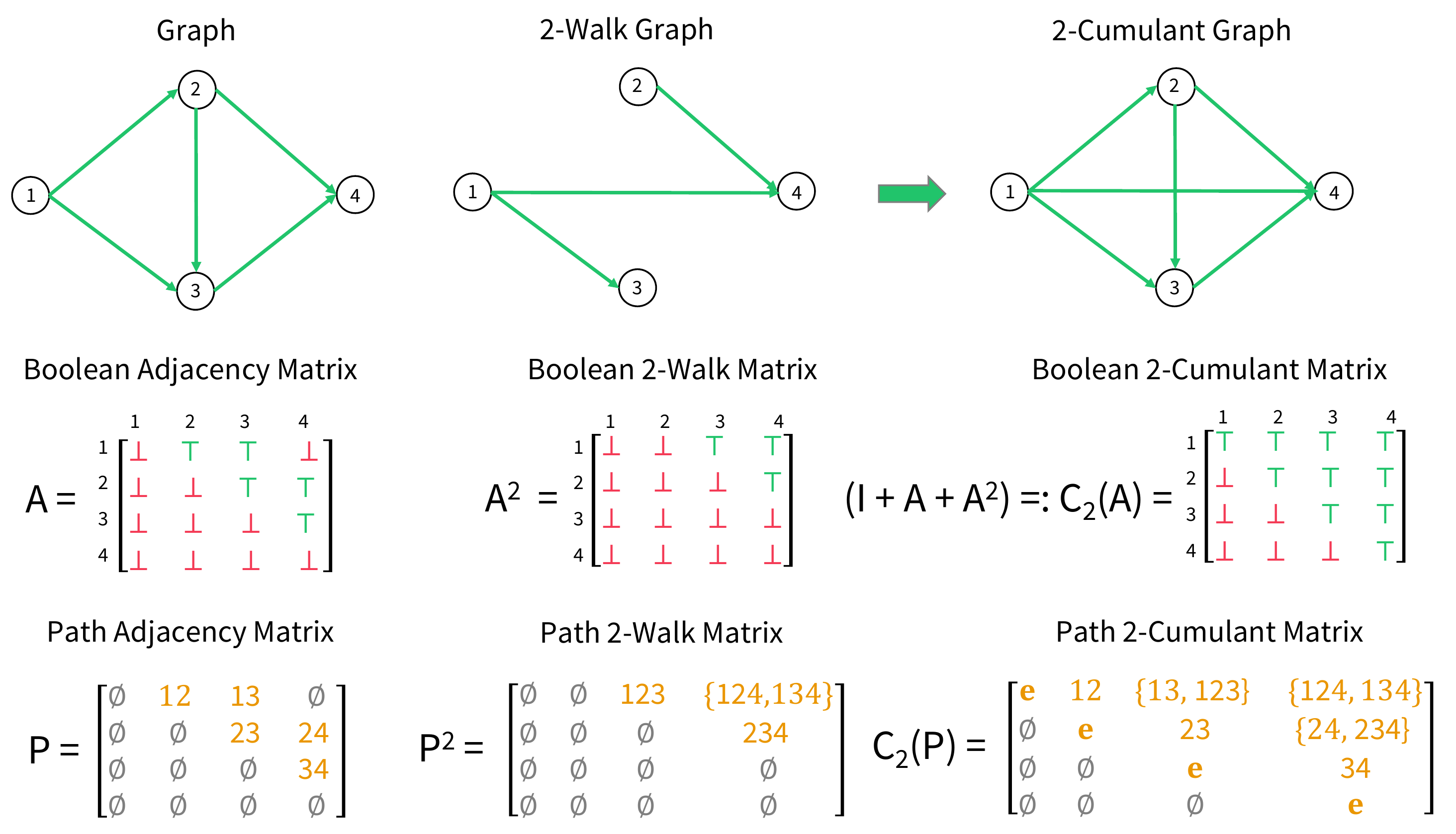}
    \caption{Simple directed graphs are equivalent to matrices valued in the Boolean semi-ring, \Cref{defn:boolean-semi-ring}. 
    Taking powers of this matrix reveals longer walks between nodes. The $k$-cumulant aggregates walks of length $k$ or less. Using a matrix with the path semi-ring, \Cref{defn:path-semi-ring}, encodes different paths between the same pair of nodes.}
    \label{fig:graph-cartoons}
\end{figure}


Interpretations of Boolean matrix addition and multiplication in graph theory are classical and well-understood. If $A$ and $B$ are two $n\times n$ Boolean matrices---equivalently viewed as graphs $G$ and $H$---then $A+B$ can be interpreted as the union of the edge sets of $G$ and $H$.
The matrix product $AB$ can be viewed as the matrix of length 2 walks, where the first edge is traversed in $G$ and the second edge is traversed in $H$.
When $A=B$, the matrix $A^2$ has a $\top$ in entry $(i,j)$ if and only if there is a length 2 walk from $i$ to $j$.
However, if there are multiple length 2 walks between $i$ and $j$, the matrix $A^2$ cannot detect that. For this purpose it is better to use adjacency matrices valued in the natural numbers $\mathbb{N}$ or even $\Path(G)$ as these will encode the number and exact names of routes between nodes, respectively; see \Cref{fig:graph-cartoons}.

The rest of this section is devoted to understanding the higher powers of a matrix TVG $M$.
As we will see, the matrix $M^k$ will encode the intervals of time in which an instantaneous length $k$ walk exists between two nodes.
This will require proving an isomorphism between matrix TVGs and functions from $\R$ to $\Mat_n(\Bool)$. 
This in turn depends on the following ``snapshot'' construction.

\begin{definition}[Snapshot of a matrix TVG, cf. \cite{myers2023temporal}]\label{def:snapshot}
    Let $tI$ be the matrix with $(tI)_{ii}=\{t\}$ along the diagonal and $(tI)_{ij} = \varnothing$ for off-diagonal entries.
    If $M\in \Mat_n(\mathcal{P}(\R))$ is a matrix TVG, then the \define{snapshot} of $M$ \define{at $t$} is the matrix $(tIM)$, viewed as matrix of Booleans $\snap_t(M)$, i.e.,
    \[
        \snap_t(M)_{ij}=\top \iff  (tIM)_{ij}=\{t\} \quad \text{and} \quad \snap_t(M)_{ij}=\bot \iff (tIM)_{ij}=\varnothing.
    \]
    This defines a semi-ring homomorphism $\snap_t : \Mat_n(\powR) \to \Mat_n(\Bool)$.
\end{definition}

The following theorem is fundamental to our paper. Its proof is somewhat lengthy and deferred to the Appendix, where it appears under \Cref{thm:boolean-curry-appendix}.

\begin{theorem}\label{thm:boolean-curry}
The semi-ring of matrix TVGs $\,\Mat_n(\powR)$ is isomorphic to the semi-ring of functions from $\R$ to $\Mat_n(\Bool)$.
This isomorphism is witnessed by the homomorphism
\[
    \Psi: \Mat_n(\powR) \to \Fun(\R,\Mat_n(\Bool)), \quad \text{where} \quad \Psi(M)(t)=\snap_t(M)
\]
is the snapshot of $M$ at time $t$.
\end{theorem}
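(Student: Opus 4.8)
The plan is to exhibit $\Psi$ as a composite of two transparent semi-ring isomorphisms and then verify that this composite is exactly the snapshot map. The first factor \emph{curries the real parameter past the matrix}: for any semi-ring $S$ there is a bijection $\mathrm{cur}\colon \Mat_n(\Fun(\R,S)) \to \Fun(\R,\Mat_n(S))$ given by $\mathrm{cur}(M)(t)_{ij} := M_{ij}(t)$, since both sides merely repackage an $n\times n$ family $(f_{ij})_{i,j}$ of functions $\R\to S$. One checks $\mathrm{cur}$ is a semi-ring homomorphism by observing that for each $t$ the evaluation map $\mathrm{ev}_t\colon\Fun(\R,S)\to S$ is a semi-ring homomorphism, so it commutes with the entrywise $\oplus$ and with the matrix product $(MN)_{ij}=\oplus_{k} M_{ik}\otimes N_{kj}$ of \Cref{def:matrix-semi-ring}; explicitly, $\mathrm{cur}(MN)(t)_{ij}=\oplus_{k} M_{ik}(t)\otimes N_{kj}(t)=(\mathrm{cur}(M)(t)\,\mathrm{cur}(N)(t))_{ij}$, and $\mathrm{cur}$ carries the identity of $\Mat_n(\Fun(\R,S))$ to the constant function at the identity matrix of $\Mat_n(S)$. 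A bijective semi-ring homomorphism is an isomorphism.

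The second factor applies $\Phi$ entrywise. By \Cref{rmk:boolean-subsets}, $\Phi\colon\powR\to\Fun(\R,\Bool)$, $A\mapsto 1_A$, is a semi-ring isomorphism. Any semi-ring homomorphism $\varphi\colon S\to S'$ induces an entrywise map $\Mat_n(\varphi)\colon\Mat_n(S)\to\Mat_n(S')$ that is again a semi-ring homomorphism---it respects $+$ entrywise, respects the matrix product because $\varphi$ respects $\oplus$ and $\otimes$, and preserves the zero and identity matrices because $\varphi(\n)=\n$ and $\varphi(\e)=\e$---and is an isomorphism whenever $\varphi$ is. Hence $\Mat_n(\Phi)$ is an isomorphism, and so is $\mathrm{cur}\circ\Mat_n(\Phi)\colon\Mat_n(\powR)\to\Fun(\R,\Mat_n(\Bool))$. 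To see this composite equals $\Psi$, compute $(\mathrm{cur}\circ\Mat_n(\Phi))(M)(t)_{ij}=1_{M_{ij}}(t)$, which is $\top$ exactly when $t\in M_{ij}$; on the other hand, $(tIM)_{ij}=\bigcup_{k}(tI)_{ik}\cap M_{kj}=\{t\}\cap M_{ij}$ (only the $k=i$ summand can be nonempty), so \Cref{def:snapshot} gives $\snap_t(M)_{ij}=\top$ exactly when $t\in M_{ij}$ as well. Therefore $\Psi=\mathrm{cur}\circ\Mat_n(\Phi)$ is a semi-ring isomorphism, with inverse $\Psi^{-1}(F)_{ij}=\{\,t\in\R : F(t)_{ij}=\top\,\}$.

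The one step demanding genuine care is multiplicativity across the intermediate index. The product in $\Mat_n(\powR)$ forms $\bigcup_{k}(M_{ik}\cap N_{kj})$, and one must confirm that under $\Phi$ and the currying this union-of-intersections becomes the $\mathrm{OR}$-of-$\mathrm{AND}$s defining the product in $\Mat_n(\Bool)$---equivalently, that $t\in\bigcup_{k}(M_{ik}\cap N_{kj})$ unwinds to ``there is some $k$ with $t\in M_{ik}$ and $t\in N_{kj}$'', together with the fact that $\Phi$ and each $\mathrm{ev}_t$ preserve $\oplus$ and $\otimes$. Once those homomorphism facts are secured, the remaining axioms (the two monoid laws, left/right distributivity, the annihilation law) and the matching of neutral elements (the zero matrix and the identity matrix $I$ with $\R$'s on the diagonal, corresponding to the constant $\Mat_n(\Bool)$-valued functions $0$ and $I$) follow formally; that routine but lengthy bookkeeping is presumably why the full argument is deferred to the appendix.
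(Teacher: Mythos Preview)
Your proof is correct and takes a genuinely different route from the paper's. The paper argues directly: it fixes notation for both sides, then checks by hand that $\Psi$ sends the neutral elements $\R I$ and $\varnothing I$ to the constant functions at $\top I$ and $\bot I$, that $t\in (M+N)_{ij}$ unwinds to $m(t)(i,j)\vee n(t)(i,j)$, and that $t\in (MN)_{ij}=\bigcup_k M_{ik}\cap N_{kj}$ unwinds to $\bigvee_k m(t)(i,k)\wedge n(t)(k,j)$. Bijectivity is left implicit. Your approach instead factors $\Psi$ as $\mathrm{cur}\circ\Mat_n(\Phi)$, invoking \Cref{rmk:boolean-subsets} for the entrywise piece and a generic currying isomorphism $\Mat_n(\Fun(\R,S))\cong\Fun(\R,\Mat_n(S))$ for the repackaging, then verifies the composite agrees with the snapshot. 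The paper's argument is self-contained and makes the Boolean logic visible; yours is more structural, isolates the two independent reasons the isomorphism holds (the $\powR\cong\Fun(\R,\Bool)$ coincidence and the harmless swap of $\Mat_n$ with $\Fun(\R,-)$), and yields the inverse $\Psi^{-1}(F)_{ij}=\{t:F(t)_{ij}=\top\}$ for free. The ``union-of-intersections becomes OR-of-ANDs'' step you flag as the delicate point is exactly the multiplicativity computation the paper does explicitly, so the two arguments share the same kernel even if the packaging differs.
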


Although \Cref{thm:boolean-curry} allows us to interpret a matrix TVG as a family of $\R$-indexed Boolean matrices, for computational purposes it is better to work intrinsically with matrices of lifetimes, where each entry is a finite union of closed intervals.
To wit, if $M$ is a matrix TVG with at most $L$ closed intervals in each entry, then finding $M_{ij}\cap M_{jk}$ requires at most $L^2$ intersection operations, but working with the Boolean perspective, i.e., $(m_{ij} \wedge m_{jk})(t)$, requires checking every $t\in \R$ to see if the resulting matrix entry evaluates true at $t$---a computational impossibility.
As such, we finish this section with a few more TVG-centric constructions and conclude with a corollary that interprets these results through the Boolean lens of \Cref{thm:boolean-curry}.

\begin{definition}[Lifetime of a Walk]\label{def:walk-lifetime}
    Assume  $\cG = (G, \ell_M)$ is a TVG, as in \Cref{defn:TVG}, and $\gamma=[v_{i_0}, \ldots, v_{i_k}]$ is a list of $k+1$ nodes defining a $k$-walk (length $k$ path) in $G$.
    The \define{walk lifetime} is the intersection of the lifetimes of each edge appearing in the walk, i.e.,
    \[
        \ell_M(\gamma)=\ell_M([v_{i_0},v_{i_1}])\cdot \ell_M([v_{i_1},v_{i_2}])\cdots \ell_M([v_{i_{k-1}},v_{i_k}]).
    \]
    For a walk of length 0, i.e., $\gamma=[v_i]$, then $\ell_M([v_i])=\ell_M(v_i)$ is the lifetime of that single vertex.
\end{definition}

\Cref{def:walk-lifetime} connects \Cref{defn:path-semi-ring} with \Cref{def:lifetime-semi-ring} as follows:

\begin{lemma}
    Assume $\cG = (G, \ell_M)$ is a TVG, but where $\ell_M(v_i)=\R$ for every vertex $v_i$, then
    the lifetime function $\ell_M$ extends to a semi-ring homomorphism (\Cref{def:semi-ring-homomorphism})
    \[
    \ell_M : \Path(G) \to \subR, \quad \text{where} \quad \sum_i \gamma_i \mapsto \bigcup_i \ell_M(\gamma_i).
    \] 
\end{lemma}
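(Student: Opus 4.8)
The plan is to verify directly that the assignment $\sum_i \gamma_i \mapsto \bigcup_i \ell_M(\gamma_i)$ satisfies the four axioms defining a semi-ring homomorphism from $\Path(G)$ (\Cref{defn:path-semi-ring}) to $\subR$ (\Cref{def:lifetime-semi-ring}). First I would settle well-definedness: for a walk $\gamma$ of length $\geq 1$, $\ell_M(\gamma)$ is a finite intersection of edge lifetimes $\ell_M(e)\in\subR$, and for a length-$0$ walk $\ell_M([v])=\ell_M(v)=\R\in\subR$; since $\subR$ is closed under finite intersection and under finite union, and since an element of $\Path(G)$ is (as the notation $\sum_i\gamma_i$ indicates) a \emph{finite} collection of walks, $\bigcup_i\ell_M(\gamma_i)$ indeed lands in $\subR$. (For infinite formal sums, such as those produced by a Kleene star in the presence of a cycle, one would instead take values in $\powR$.) Additivity and preservation of $\n$ are then immediate: $a+b$ is the set-theoretic union of the underlying walk sets, so $\ell_M(a+b)=\ell_M(a)\cup\ell_M(b)=\ell_M(a)+\ell_M(b)$, and the empty collection of walks maps to the empty union $\varnothing=\n$.

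The hypothesis $\ell_M(v_i)=\R$ is used exactly to make the map unital. Since $\e=\sum_{v_i\in V}[v_i]$ and $\ell_M([v_i])=\ell_M(v_i)=\R$, we get $\ell_M(\e)=\bigcup_{v_i\in V}\R=\R=\e$ in $\subR$; without this hypothesis, the image of $\e$ would be only the union of the vertex lifetimes, in general a proper subset of $\R$.

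The heart of the argument is multiplicativity. Because intersection distributes over union in $\powR$, it is enough to show $\ell_M(\gamma\ast\delta)=\ell_M(\gamma)\cdot\ell_M(\delta)$ for single walks $\gamma$ and $\delta$ and then extend bilinearly. When the last vertex of $\gamma$ equals the first vertex of $\delta$, the walk $\gamma\ast\delta$ has ordered edge list equal to the concatenation of the edge lists of $\gamma$ and of $\delta$, so by \Cref{def:walk-lifetime} $\ell_M(\gamma\ast\delta)$ is the intersection of all of these edge lifetimes, which by associativity and commutativity of $\cap$ equals $\ell_M(\gamma)\cap\ell_M(\delta)$; the degenerate cases $[v]\ast\delta=\delta$ (for $v$ the first vertex of $\delta$) and $\gamma\ast[v]=\gamma$ (for $v$ the last vertex of $\gamma$) are consistent with this, since $\R\cap\ell_M(\delta)=\ell_M(\delta)$, again using the hypothesis.

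The point I expect to require the most care is the treatment of non-composable pairs. In $\Path(G)$ the product $\gamma\ast\delta$ of two walks that do not meet is declared to be $\varnothing$, whereas $\ell_M(\gamma)\cap\ell_M(\delta)$ need not be empty, so the comparison of $\ell_M(a\ast b)=\bigcup_{i,j}\ell_M(\gamma_i\ast\delta_j)$ with $\ell_M(a)\cdot\ell_M(b)=\bigcup_{i,j}\ell_M(\gamma_i)\cap\ell_M(\delta_j)$ must be carried out in the regime where only composable walks are ever multiplied. This is precisely the setting of interest: walks are organized by their source and target, so that for fixed endpoints $i,k$ the relevant product is $\sum_j(\text{walks }i\to j)\ast(\text{walks }j\to k)$, in which every pair composes. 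In that form the computation above applies entrywise, the trivial walk $[v_i]$ at each vertex mapping to $\R$ so that identities are preserved, and one obtains the asserted homomorphism. Getting this bookkeeping right, rather than any conceptual difficulty, is the main obstacle.
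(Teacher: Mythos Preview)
Your verification of the additive axiom, preservation of $\n$, and preservation of $\e$ is correct and considerably more explicit than the paper's proof, which is a single sentence: the hypothesis $\ell_M(v_i)=\R$ makes $\ell_M(\e)=\R$, and ``the additive and multiplicative properties then hold by definition.'' On those points your approach and the paper's coincide, yours being the more careful one.

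Where your proposal diverges is exactly where it should: you have correctly put your finger on a genuine defect in the \emph{statement}, not merely a difficulty in the proof. For non-composable walks $\gamma,\delta$ one has $\gamma\ast\delta=\varnothing$ in $\Path(G)$, hence $\ell_M(\gamma\ast\delta)=\varnothing$, while $\ell_M(\gamma)\cap\ell_M(\delta)$ is typically nonempty (e.g.\ two disjoint edges with overlapping lifetimes). So $\ell_M$ is \emph{not} multiplicative on all of $\Path(G)$ as defined, and the lemma is not literally true as written; the paper's one-line proof simply does not confront this. Your proposed remedy---restricting attention to products of walks with matching target and source, i.e.\ to the ``matrix-entry'' products $\sum_j(\text{walks }i\to j)\ast(\text{walks }j\to k)$---is exactly the setting in which the map \emph{is} multiplicative, and it is the only use the paper makes of the lemma (via \Cref{cor:union-walk-lifetimes}). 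But that argument proves a corrected statement (e.g.\ that $\ell_M$ induces a semi-ring homomorphism $\Mat_n(\Path(G))\to\Mat_n(\subR)$, or that $\ell_M$ is a homomorphism of the underlying path \emph{category}), not the lemma as printed. Your instinct to flag this rather than paper over it is the right one.
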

\begin{proof}
    The assumption that each vertex lives forever, i.e., $\ell_M(v_i)=\R$, guarantees that \linebreak $\ell_M(1)=\ell_M(\sum [v_i])=\R$. The additive and multiplicative properties then hold by definition.  
\end{proof}

\begin{corollary}[Higher Powers of a Matrix TVG]\label{cor:union-walk-lifetimes}
If $M\in \Mat_{n}(\mathcal{P}(\R))$ is a matrix TVG, then $M^k_{ij}$ is the union of lifetimes over which there exists a length $k$ walk from node $i$ to node $j$.
\end{corollary}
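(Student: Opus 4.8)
The plan is to unwind the definition of matrix multiplication in the semi-ring $\Mat_n(\powR)$ and recognize each resulting summand as the lifetime of a walk. First I would record the standard ``walk expansion'' of a matrix power, valid in any semi-ring $S$: for $M\in \Mat_n(S)$,
\[
(M^k)_{ij} \;=\; \bigoplus_{\substack{(v_0,\ldots,v_k)\\ v_0=i,\ v_k=j}} M_{v_0 v_1}\otimes M_{v_1 v_2}\otimes \cdots \otimes M_{v_{k-1} v_k},
\]
the sum ranging over all $(k+1)$-tuples of vertices with the prescribed endpoints. The case $k=1$ is immediate from \Cref{def:matrix-semi-ring}, and the inductive step $(M^{k+1})_{ij}=\bigoplus_{\ell}(M^k)_{i\ell}\otimes M_{\ell j}$ follows by substituting the inductive hypothesis and using associativity of $\otimes$ together with left distributivity to reindex the double sum as a single sum over $(k+2)$-tuples.

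Next I would specialize to $S=\powR$, where by \Cref{def:lifetime-semi-ring} we have $\oplus = \cup$ and $\otimes = \cap$. The expansion then reads
\[
(M^k)_{ij} \;=\; \bigcup_{\substack{(v_0,\ldots,v_k)\\ v_0=i,\ v_k=j}} \ \bigcap_{\ell=0}^{k-1} M_{v_\ell v_{\ell+1}}.
\]
Each index tuple $(v_0,\ldots,v_k)$ with $v_0=i$ and $v_k=j$ is exactly a length-$k$ walk $\gamma$ from $i$ to $j$ (tuples that traverse an edge with empty entry contribute $\varnothing$ to the union and so are harmless), and the associated intersection $\bigcap_{\ell} M_{v_\ell v_{\ell+1}}$ is precisely the walk lifetime $\ell_M(\gamma)$ of \Cref{def:walk-lifetime} extended to matrix TVGs, i.e., the set of times at which every edge traversed by $\gamma$ is simultaneously alive. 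Hence $(M^k)_{ij}=\bigcup_{\gamma}\ell_M(\gamma)$, which is the union of lifetimes over which a length-$k$ walk from $i$ to $j$ exists, as claimed.

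The only genuine subtlety — the ``main obstacle,'' such as it is — is the bookkeeping at the interface between vertex tuples and walks: checking that walks through empty-lifetime edges drop out of the union, and that the length-$0$ conventions of \Cref{def:walk-lifetime} play no role here since $k\geq 1$. Alternatively, one can sidestep the induction entirely by invoking \Cref{thm:boolean-curry}: since $\snap_t$ is a semi-ring homomorphism, $\snap_t(M^k)=\snap_t(M)^k$, so $t\in (M^k)_{ij}$ iff $\snap_t(M)^k_{ij}=\top$ iff there is a length-$k$ walk from $i$ to $j$ in the snapshot graph at time $t$ — that is, iff some length-$k$ walk from $i$ to $j$ is alive at $t$ — which yields the set equality pointwise in $t$.
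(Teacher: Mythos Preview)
Your proposal is correct. The paper does not give an explicit proof of this corollary; it is stated as an immediate consequence of the preceding material (\Cref{def:matrix-semi-ring}, \Cref{def:walk-lifetime}, and \Cref{thm:boolean-curry}), and both of your arguments---the direct inductive expansion of $(M^k)_{ij}$ in $\powR$ and the pointwise snapshot argument via \Cref{thm:boolean-curry}---are exactly the natural ways to unpack it.
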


\begin{figure}
    \centering
    \includegraphics[width=\textwidth]{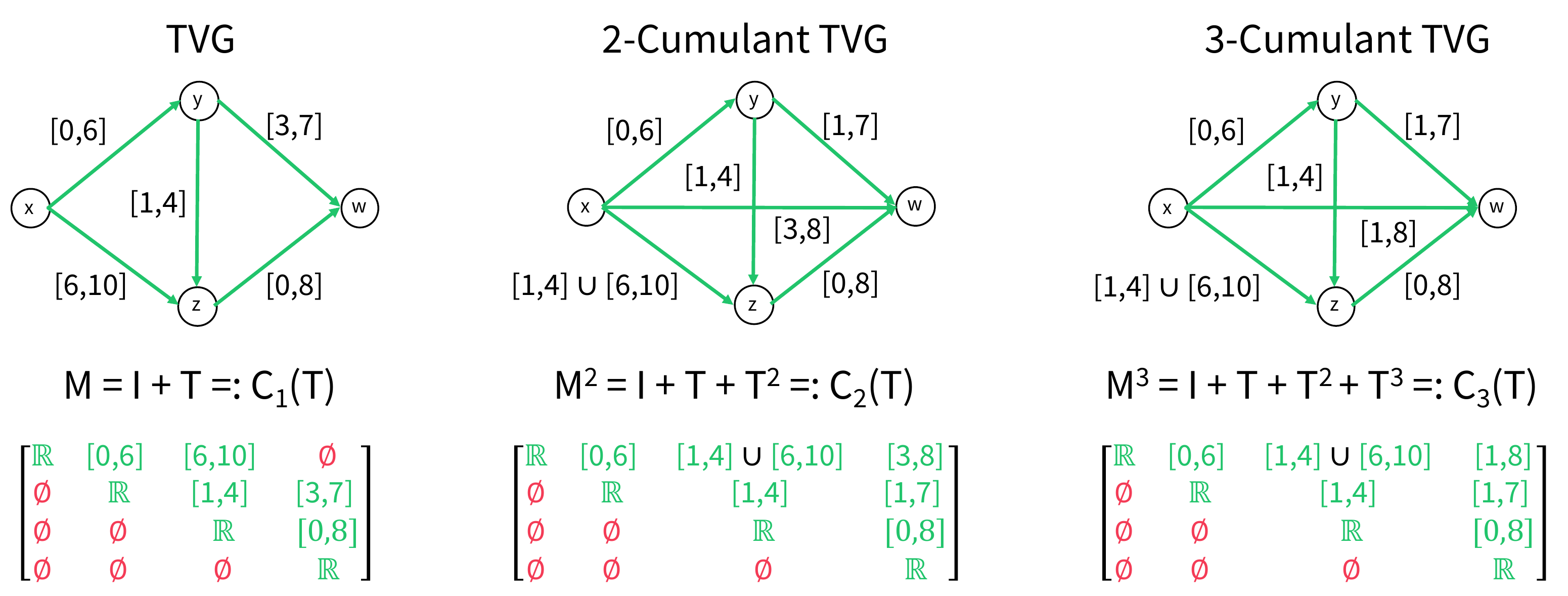}
    \caption{A time-varying graph can be viewed as a matrix valued in the lifetime semi-ring, \Cref{def:lifetime-semi-ring}. Higher powers of this matrix model times during which longer walks can occur. The $k$-cumulant aggregates windows of opportunity over which walks of length $k$ or less can occur. Notice that the $k$-cumulant, for increasing $k$, defines a filtration of lifetimes over each edge.}
    \label{fig:TVG-cartoons}
\end{figure}

\subsection{Measuring Communication Capacity of STARLINK via the Kleene Star}\label{sec:lifetime-curves}

As \Cref{thm:boolean-curry} and Corollary \Cref{cor:union-walk-lifetimes} indicate, higher powers of a matrix TVG model when in time instantaneous walks can occur between nodes.
In this section we show how the Kleene star and its truncation, called the cumulant, leads to a particular growth filtration of the temporal capacity for communicating between two nodes in a TVG.
This provides a summary statistic for measuring proximity of a TVG to a strongly connected one, which we illustrate using increasing size samples from STARLINK---a SpaceX-operated internet service that features over 3,000 satellites in low Earth orbit.
Our findings illustrate that temporal capacity illustrates a phase transition around 30 satellites, with larger subnetworks being strongly connected in a sense defined here.

\begin{definition}[Kleene Star and k-Cumulant] \label{def:kleene_star}
Let $A$ be an arbitrary matrix TVG.
The \define{Kleene star} of $A$, written
\[
A^*:=I + A + A^2 + A^3 + \cdots,
\]
is the matrix whose $(i,j)$ entry is the subset of $\
R$ where any communication from node $i$ to node $j$ can occur, perhaps using a walk of arbitrary length.
The \define{k-cumulant} is the sum of the first $k+1$ terms in the Kleene star and is written $C_k(A)$ or $C_k$, when the matrix $A$ is clear from context.
Finally, we say the \define{Kleene star converges} at radius $r$ if $A^*=C_r(A)$ for $r\in\mathbb{N}$.
See \Cref{fig:TVG-cartoons} for an example TVG and its cumulants, whose Kleene star converges at $r=3$.
\end{definition}

\begin{figure}[h]
\begin{center}
\begin{tikzpicture}
\begin{axis}[
    width=12cm,
    height=7.5cm,
    title={Aggregate Average $L(\mu^k)(i, j)$ \\ 
        {\footnotesize $(30)$ $n$-Node Simulations}},
    xlabel={$k$},
    ylabel={Time in Seconds},
    ymin=0,
    ymax=90000,
]
\foreach \yindex/\nval in {4/20,6/30,8/40} {
    \edef\temp{\noexpand\addlegendentry{$n=\nval$}}
    \addplot+[ultra thick] table[y index = \yindex]
        {simulations/average_lifetime_matrix_aggregates_today.dat};
    \temp
}
\foreach \yindex/\nval in {4/50,5/70,6/100} {
    \edef\temp{\noexpand\addlegendentry{$n=\nval$}}
    \addplot+[ultra thick] table[y index = \yindex]
        {simulations/average_lifetime_matrix_aggregates_today_ext.dat};
    \temp
}
\end{axis}
\end{tikzpicture}
\end{center}

\caption{Samples of $n$ nodes from STARLINK are simulated using SOAP for one day or 86,400 seconds. For each of these simulations, the average lifetime curve (defined in \Cref{lem:lifetime-curves}) is computed across all nodes $i,j$ for each power $k$ of the TVG matrix. There appears to be a jump in average connectivity above $n=30$ nodes.}
\label{fig:lifetime-curves-sample}
\end{figure}
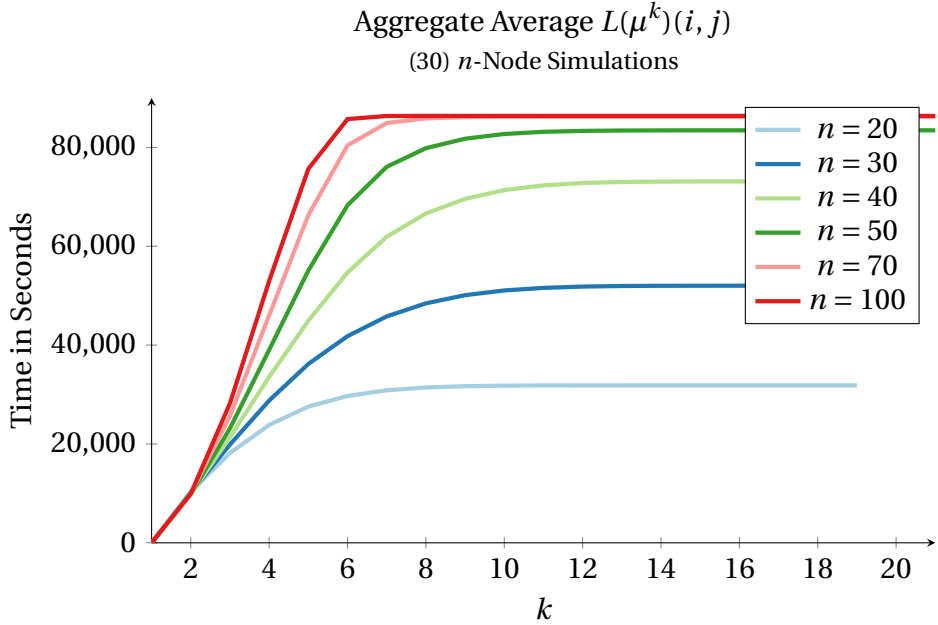


\begin{remark}\label{rmk:matrix-powers}
    The Kleene star construction helps illustrate why we separated out the notion of the lifetime matrix $M$ (\Cref{defn:matrix-TVG}) associated to a simple TVG $\cG = (G, \ell_M)$ from its adjacency matrix (\Cref{defn:adjacency-matrix-TVG}).
    In particular, if $\ell_M(v)=\R$, then $M=I + A$ and we can use the idempotency of $\powR$ (\Cref{rmk:idempotent}) to show that
    \[
        C_k(A)=I + A + \cdots + A^k = (I+A)^k = M^k.
    \]
    To see this, note that $A+A=A$ and
    \[
    M^2 = (I+A)^2=(I+A)(I+A)=I^2+IA+AI+A^2=I+A+A^2 = C_2(A).
    \]
    The general result then follows by induction.
\end{remark}

Since addition of matrix TVGs is the union of lifetimes, the $k$-cumulant defines over each edge a chain of nested subsets of $\R$.

\begin{lemma}[Lifetime Curves]\label{lem:lifetime-curves}
    Suppose $\cG = (G, \ell_M)$ is a simple TVG with $\ell_M(v)=\R$, so that the corresponding matrix of lifetimes satisfies $M=I+A$.
    For such a TVG we have a \define{lifetime filtration}, which associates to each edge $[i,j]$ of $G$ the chain of subsets
    \[
    M^0_{ij}=\varnothing \subseteq M^1_{ij} \subseteq \cdots \subseteq M^k_{ij} \subseteq \cdots,
    \]
    whose $k$th entry is the union of lifetimes of walks length $k$ or less; see \Cref{def:walk-lifetime}.
    Moreover, since closed intervals in $\R$ are Lebesgue measurable, we can associate to each edge $[i,j]$ of ${\cG = (G, \ell_M)}$ its \define{lifetime curve}, which is a non-decreasing function
    \[
    L(M^{\bullet})(i,j): \mathbb{N} \to \R, \quad \text{where} \quad L(\mu^k)(i,j) = \mu(M^k_{ij})
    \]
    is the sum of the lengths of intervals in entry $(i,j)$ of $M^k$. This is non-infinite, assuming all off-diagonal entries of $M$ have finite measure. See \Cref{fig:lifetime-curves-sample} for some examples.
\end{lemma}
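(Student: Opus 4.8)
The plan is to reduce every assertion to the walk-lifetime interpretation already in hand. Since $\ell_M(v)=\R$ gives $M = I + A$, \Cref{rmk:matrix-powers} (via idempotency of $\powR$, \Cref{rmk:idempotent}) identifies $M^k = C_k(A) = I + A + \cdots + A^k$. Combining this with \Cref{cor:union-walk-lifetimes} applied to $A$ shows that $M^k_{ij}$ is the union of the walk lifetimes $\ell_M(\gamma)$ over all walks $\gamma$ from $i$ to $j$ of length at most $k$ (with $M^0 = I$ contributing only $\varnothing$ off the diagonal). Nestedness is then immediate: passing from $M^k$ to $M^{k+1}$ only adjoins, via $\cup$, the lifetimes of the new walks of length $k+1$, so $M^0_{ij}\subseteq M^1_{ij}\subseteq\cdots$ and for $i\ne j$ we have $M^0_{ij}=\varnothing$ as stated; this is the lifetime filtration over the edge $[i,j]$.

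Next I would check that each $M^k_{ij}$ is Lebesgue measurable — in fact a finite union of (possibly unbounded) closed intervals in the sense of \Cref{def:lifetime-poset} — so that $L(\mu^k)(i,j) := \mu(M^k_{ij})$ makes sense. This is a short induction on $k$: the class of finite unions of closed intervals is closed under finite unions and finite intersections, and the product rule $(MN)_{ij}=\bigoplus_{\ell} M_{i\ell}\otimes N_{\ell j}$ applies only finitely many of these operations, so $\Mat_n(\subR)$ is closed under the semi-ring operations and $M^k$ remains in $\Mat_n(\subR)$.

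For finiteness — the one step requiring any real thought — fix an off-diagonal pair $i\ne j$ and $k\ge 1$ (for $k=0$ the entry is $\varnothing$). Since $G$ has finitely many vertices, there are only finitely many walks from $i$ to $j$ of length $\le k$, and $M^k_{ij}$ is their finite union, so by subadditivity of $\mu$ it suffices to bound $\mu(\ell_M(\gamma))$ for each. Such a $\gamma$ has length $\ge 1$ (as $i\ne j$), hence traverses at least one off-diagonal edge $[a,b]$, and because $\ell_M(\gamma)$ is an intersection of edge lifetimes (\Cref{def:walk-lifetime}) we get $\ell_M(\gamma)\subseteq M_{ab}$ with $\mu(M_{ab})<\infty$ by the off-diagonal hypothesis; summing over the finitely many walks gives $\mu(M^k_{ij})<\infty$, so the curve takes values in $\R$. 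Monotonicity of $k\mapsto L(\mu^k)(i,j)$ is then immediate from the nesting $M^k_{ij}\subseteq M^{k+1}_{ij}$ and monotonicity of Lebesgue measure. The genuinely load-bearing observation is that a nontrivial walk is forced through an off-diagonal edge of finite-measure lifetime; the rest is bookkeeping with the finitely many bounded-length walks of a finite graph and the elementary closure properties of $\subR$, so I anticipate no real obstacle.
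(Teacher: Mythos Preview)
Your argument is correct. The paper states this lemma without proof, treating all of its claims as immediate from the surrounding material (chiefly \Cref{rmk:matrix-powers} for $M^k=C_k(A)$ and \Cref{cor:union-walk-lifetimes} for the walk-lifetime interpretation), and your write-up simply makes those implicit steps explicit: the nesting from $C_{k+1}(A)=C_k(A)+A^{k+1}$, the closure of $\subR$ under finite unions and intersections for measurability, and the finiteness bound via a single off-diagonal edge on any nontrivial walk. There is nothing to compare against beyond noting that you have supplied the details the paper omits, and that your choices line up with the tools the paper itself sets up.
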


The lifetime curves are meant to provide a summary statistic for measuring how close a TVG is to a strongly connected one, which we define now.

\begin{definition}[Strongly Connected TVGs]
A matrix TVG $M\in \Mat_n(\powR)$ is \define{strongly connected} if $M^*=I+M+M^2+\cdots$ is equal to the constant matrix where every entry has value $\R$.
More generally, if $W\subseteq \R$ is some subset of $\R$, then $M\in \Mat_n(\mathcal{P}(W))$ is \define{strongly connected over $W$} if $M^*$ is the constant matrix with value $W$. \footnote{Typically, we will assume $W=[0,86400]$ for simulations that occur over one day, measured in seconds.}
\end{definition}

\begin{remark}[Strong Connectivity for Directed Graphs]
    As a reminder, a finite directed graph is strongly connected if one can go from any node to any other node using a directed path. This is equivalent to the Kleene star being the constant $\top$ matrix, which justifies our definition above.
\end{remark}



As \Cref{fig:lifetime-curves-sample} indicates, the lifetime curves of a satellite system sampled from STARLINK can illustrate radically different behaviors depending on the number of nodes in the system.
The systems here with 40 or more nodes seem to illustrate almost exponential growth in the average lifetime (averaged over all pairs of nodes) as a function of the walk-length $k$, whereas systems with 30 or fewer nodes have growth more akin to a square root function.
However, all curves reach a horizontal asymptote, as the Kleene star converges for finite values of $r$.
We review this result in the most general setting, which was first established by Carr\'e as \cite[Thm. 3.1]{Carre1971} and then re-formulated in Baras \cite[p. 19]{Baras2010}, before providing our own improvement on this result for the semi-ring $\powR$.

\begin{theorem}[Kleene Star Convergence, cf.  \cite{Carre1971} and \cite{Baras2010}]\label{thm:carre-convergence}
    Suppose $G=(V,E)$ is a simple directed graph that is \define{weighted} in a semi-ring $(S,\oplus,\otimes,\n,\e)$, i.e., a map $w:V\times V \to S$ is given. 
    If for every cycle, i.e., a path $\gamma=[v_{i_0},\ldots,v_{i_n}]$ where $v_{i_0}=v_{i_n}$, the weight
    \[
    w(\gamma)=w(v_{i_0},v_{i_1})\otimes \cdots \otimes w(v_{i_{n-1}},v_{i_n}) \quad \text{satisfies} \quad w(\gamma)\oplus \e = \e,
    \]
    then the Kleene star of the weighted adjacency matrix $A_{ij}=w(i,j)$ converges for $r\leq |V|-1$.
\end{theorem}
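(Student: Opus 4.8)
The plan is to show that once walk length exceeds $|V|-1$, any new walk contributes nothing to the Kleene star, because it can be ``pruned'' down to a walk of length at most $|V|-1$ with weight that is at least as large in the semi-ring's natural preorder. First I would fix nodes $i$ and $j$ and recall from \Cref{cor:union-walk-lifetimes} (or rather its semi-ring generalization, using the path-semi-ring homomorphism applied to the weighted adjacency matrix) that $(A^k)_{ij}=\bigoplus_{\gamma} w(\gamma)$, the sum being over all length-$k$ walks $\gamma$ from $i$ to $j$. Hence $(A^*)_{ij}=\bigoplus_{k\geq 0}\bigoplus_{\gamma:\,|\gamma|=k} w(\gamma)$, a sum over \emph{all} walks from $i$ to $j$, and likewise $C_r(A)_{ij}$ is the same sum restricted to walks of length $\leq r$. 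So it suffices to show that every walk $\gamma$ from $i$ to $j$ satisfies $w(\gamma)\oplus C_{|V|-1}(A)_{ij}=C_{|V|-1}(A)_{ij}$, i.e. $w(\gamma)$ is absorbed by the contribution of some shorter walk.

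The key step is the pruning argument. Given a walk $\gamma$ of length $k>|V|-1$, by pigeonhole it visits some vertex twice, so it decomposes as $\gamma=\alpha\ast\delta\ast\beta$ where $\delta$ is a cycle (based at the repeated vertex) and $\alpha\ast\beta$ is a strictly shorter walk from $i$ to $j$. Multiplicativity of $w$ gives $w(\gamma)=w(\alpha)\otimes w(\delta)\otimes w(\beta)$. Now apply the cycle hypothesis $w(\delta)\oplus\e=\e$: distributing, $w(\alpha)\otimes(w(\delta)\oplus\e)\otimes w(\beta)=w(\alpha)\otimes w(\delta)\otimes w(\beta)\oplus w(\alpha)\otimes\e\otimes w(\beta)=w(\gamma)\oplus w(\alpha\ast\beta)$, and this equals $w(\alpha)\otimes\e\otimes w(\beta)=w(\alpha\ast\beta)$. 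Therefore $w(\gamma)\oplus w(\alpha\ast\beta)=w(\alpha\ast\beta)$, i.e. in the natural preorder induced by $\oplus$ we have $w(\gamma)\preceq w(\alpha\ast\beta)$. Iterating until no repeated vertex remains, every walk of length $>|V|-1$ is dominated by a walk of length $\leq|V|-1$ between the same endpoints.

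To finish, I would assemble these domination relations: for any $k$, $(A^k)_{ij}\oplus C_{|V|-1}(A)_{ij}=C_{|V|-1}(A)_{ij}$, since each summand $w(\gamma)$ with $|\gamma|=k$ is absorbed by the weight of a shorter walk already appearing in $C_{|V|-1}(A)_{ij}$; commutativity and associativity of $\oplus$ then let us collapse the entire tail. Hence $A^*=C_{|V|-1}(A)$, giving convergence at radius $r=|V|-1$ (and a fortiori for any $r\geq|V|-1$; the statement's $r\leq|V|-1$ should be read as ``converges by radius $|V|-1$''). The main obstacle is being careful that the infinite sum $(A^*)_{ij}=\bigoplus_{k}(A^k)_{ij}$ is actually a finite sum in disguise: \emph{a priori} there are infinitely many walks, so one must argue that after the pruning reduction only finitely many weight-values occur and that idempotency-up-to-absorption (not full idempotency of $\oplus$, which need not hold in a general semi-ring) is enough to make the collapse rigorous — equivalently, one works in the quotient/natural-preorder and checks the partial sums stabilize. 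This is exactly where the hypothesis $w(\gamma)\oplus\e=\e$ does the work, and it is worth stating explicitly that it replaces the stronger ``idempotent semi-ring'' assumption used in some references.
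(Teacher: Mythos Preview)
The paper does not actually prove \Cref{thm:carre-convergence}: it is stated with attribution to Carr\'e \cite[Thm.~3.1]{Carre1971} and Baras \cite[p.~19]{Baras2010}, and the text explicitly says ``We review this result in the most general setting\ldots before providing our own improvement,'' where the improvement is \Cref{prop:Kleene-star-diam}. So there is no in-paper proof to compare against.

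That said, your argument is correct and is precisely the classical Carr\'e proof: express $(A^k)_{ij}$ as a $\oplus$-sum of walk weights, use pigeonhole on any walk of length exceeding $|V|-1$ to extract a cycle $\delta$, then use the hypothesis $w(\delta)\oplus\e=\e$ together with two-sided distributivity to show $w(\gamma)\oplus w(\alpha\ast\beta)=w(\alpha\ast\beta)$, and iterate. Your care about the ``infinite sum'' is well placed, and the right resolution is exactly the one you sketch: one shows the partial cumulants stabilize, i.e.\ $C_k(A)=C_{|V|-1}(A)$ for every $k\geq |V|-1$, which is a purely finitary statement (finitely many walks of each bounded length, each absorbed by associativity/commutativity of $\oplus$). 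Your parenthetical about reading ``$r\leq |V|-1$'' as ``converges by radius $|V|-1$'' is also the correct interpretation of the paper's slightly awkward phrasing.
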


\begin{figure}[h]
\begin{center}
\begin{tikzpicture}
\begin{axis}[
    width=6cm,
    ybar interval,
    title={Histogram of Diameters},
    xlabel={Slice Diameter},
    ylabel={Slice Count},
    x tick label as interval=false,
    xtick={5, 6, 7}
]
\addplot+ [hist={data=x, bins=3}]
    file {simulations/starlink_2023_06_0_diameter_histogram.dat};
\end{axis}
\end{tikzpicture}
\quad
\begin{tikzpicture}
\begin{axis}[
    width=6cm,
    title={Graph of TVG Diameter},
    xlabel={Slice Time},
    ylabel={Slice Diameter},
]
\addplot [thick,blue] file {simulations/starlink_2023_06_0_diameter_histogram_even.dat};
\end{axis}
\end{tikzpicture}
\end{center}
\caption{A histogram of diameters from slices of a one-day STARLINK simulation with 100 nodes. The maximum observed diameter was 7, but the typical diameter was around 6.}
    \label{fig:diam-histogram}
\end{figure}
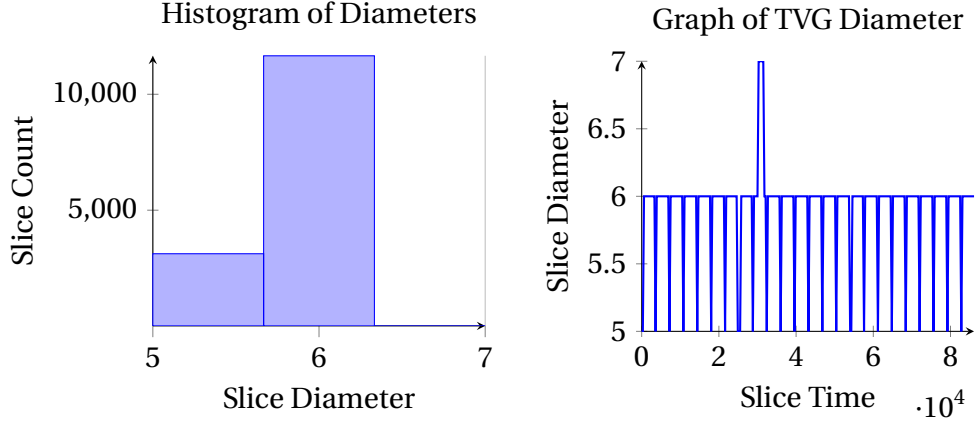


The hypotheses of \Cref{thm:carre-convergence} holds for the lifetime and powerset semi-ring as every cycle has a lifetime $w(\gamma)$ such that $w(\gamma)\cup \R =\R$.
This means that the lifetime curves of \Cref{fig:lifetime-curves-sample} will provably stabilize at $k=n-1$.
Since some of our simulations take up to $n=100$ nodes in a single simulation, it is possible that the $k$-cumulant will need to be computed up to and including $r=99$.
However the next proposition, whose proof is deferred to the appendix under \Cref{prop:Kleene-star-diam-appendix}, and our simulation results indicate that the convergence radius of the Kleene star for STARLINK simulations is typically much smaller.

\begin{figure}[h]
\begin{center}
\vspace{5pt}
\begin{tikzpicture}[baseline]
\begin{axis}[
    width=3.5cm,
    height=5cm,
    title={$n = 10$},
    ylabel={Time in Seconds},
    xmin=1, 
    xmax=7,
    ymin=0,
    ymax=86400
]
\foreach \yindex in {1,...,30} {
    \addplot+[thick] table[y index = \yindex]
        {simulations/average_lifetime_curve_10_sats_today.dat};
}
\end{axis}
\end{tikzpicture}
\quad
\begin{tikzpicture}[baseline]
\begin{axis}[
    width=3.5cm,
    height=5cm,
    title={$n = 20$},
    xmin=1, 
    xmax=12,
    ymin=0,
    ymax=86400,
    ytick=\empty
]
\foreach \yindex in {1,...,30} {
    \addplot+[thick] table[y index = \yindex]
        {simulations/average_lifetime_curve_20_sats_today.dat};
}
\end{axis}
\end{tikzpicture}
\quad
\begin{tikzpicture}[baseline]
\begin{axis}[
    width=4cm,
    height=5cm,
    title={$n = 50$},
    xmin=1, 
    xmax=16,
    ymin=0,
    ymax=86400,
    ytick=\empty
]
\foreach \yindex in {1,...,30} {
    \addplot+[thick] table[y index = \yindex]
        {simulations/average_lifetime_curve_50_sats_today.dat};
}
\end{axis}
\end{tikzpicture}
\quad
\begin{tikzpicture}[baseline]
\begin{axis}[
    width=4cm,
    height=5cm,
    title={$n = 100$},
    xmin=1, 
    xmax=16,
    ymin=0,
    ymax=87000,
    ytick=\empty
]
\foreach \yindex in {1,...,30} {
    \addplot+[thick] table[y index = \yindex]
        {simulations/average_lifetime_curve_100_sats_today.dat};
}
\end{axis}
\end{tikzpicture}
\end{center}

\begin{center}
\vspace{5pt}
\begin{tikzpicture}[baseline]
\begin{axis}[
    width=3.5cm,
    height=5cm,
    xlabel={$k$},
    ylabel={Time in Seconds},
    xmin=1, 
    xmax=7,
    ymin=0,
    ymax=87000
]
    \addplot[thick, name path=U] table[y index = 3]
        {simulations/range_sd_lifetime_matrix_10_sats.dat};
    \addplot[thick, name path=L] table[y index = 2]
        {simulations/range_sd_lifetime_matrix_10_sats.dat};
    \addplot[pattern=north west lines] fill between [of=U and L];
    \addplot[ultra thick] table[y index = 1]
        {simulations/range_sd_lifetime_matrix_10_sats.dat};
\end{axis}
\end{tikzpicture}
\quad
\begin{tikzpicture}[baseline]
\begin{axis}[
    width=3.5cm,
    height=5cm,
    xlabel={$k$},
    xmin=1, 
    xmax=12,
    ymin=0,
    ymax=87000,
    ytick=\empty
]
    \addplot[thick, name path=U] table[y index = 3]
        {simulations/range_sd_lifetime_matrix_20_sats.dat};
    \addplot[thick, name path=L] table[y index = 2]
        {simulations/range_sd_lifetime_matrix_20_sats.dat};
    \addplot[pattern=north west lines] fill between [of=U and L];
    \addplot[ultra thick] table[y index = 1]
        {simulations/range_sd_lifetime_matrix_20_sats.dat};
\end{axis}
\end{tikzpicture}
\quad
\begin{tikzpicture}[baseline]
\begin{axis}[
    width=4cm,
    height=5cm,
    xlabel={$k$},
    xmin=1, 
    xmax=16,
    ymin=0,
    ymax=87000,
    ytick=\empty
]
    \addplot[thick, name path=U] table[y index = 3]
        {simulations/range_sd_lifetime_matrix_50_sats.dat};
    \addplot[thick, name path=L] table[y index = 2]
        {simulations/range_sd_lifetime_matrix_50_sats.dat};
    \addplot[pattern=north west lines] fill between [of=U and L];
    \addplot[ultra thick] table[y index = 1]
        {simulations/range_sd_lifetime_matrix_50_sats.dat};
\end{axis}
\end{tikzpicture}
\quad
\begin{tikzpicture}[baseline]
\begin{axis}[
    width=4cm,
    height=5cm,
    xlabel={$k$},
    xmin=1, 
    xmax=16,
    ymin=0,
    ymax=87000,
    ytick=\empty
]
    \addplot[thick, name path=U] table[y index = 3]
        {simulations/range_sd_lifetime_matrix_100_sats.dat};
    \addplot[thick, name path=L] table[y index = 2]
        {simulations/range_sd_lifetime_matrix_100_sats.dat};
    \addplot[pattern=north west lines] fill between [of=U and L];
    \addplot[ultra thick] table[y index = 1]
        {simulations/range_sd_lifetime_matrix_100_sats.dat};
\end{axis}
\end{tikzpicture}
\end{center}
\caption{30 simulations of $n=10, 20, 50, 100$ (top) randomly sampled nodes from STARLINK over the course of one day, i.e., $86400$ seconds. Confidence intervals are shown below.}
    \label{fig:STARLINK-10-and-100}
\end{figure}
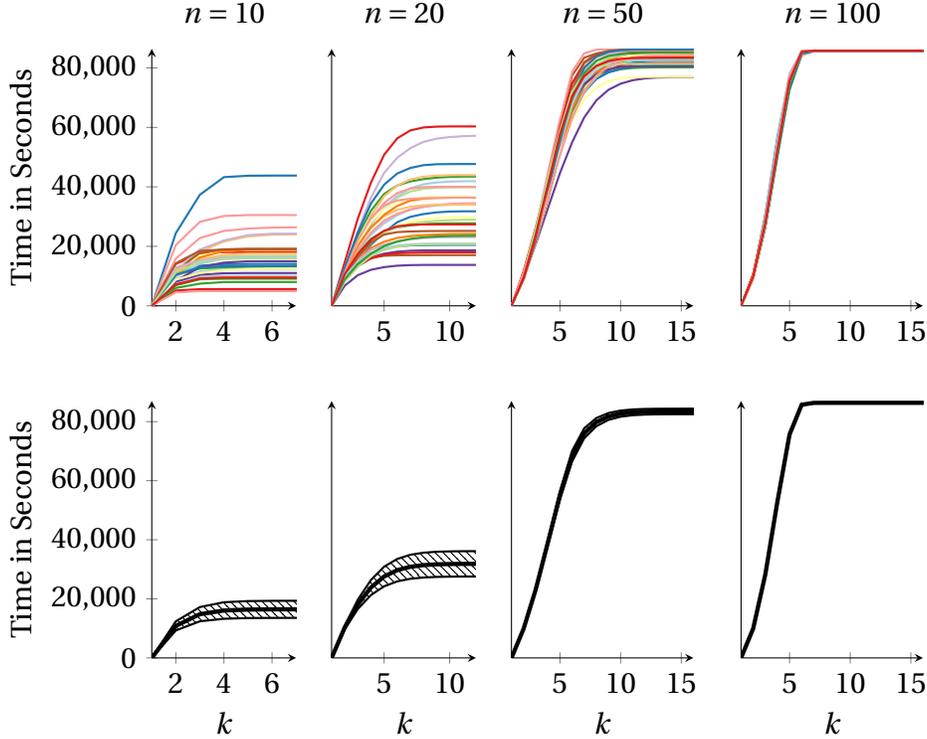


\begin{proposition}[Convergence at the Temporal Diameter]\label{prop:Kleene-star-diam}
Let $A$ be the adjacency matrix for a simple TVG $\cG = (G, \ell_M)$.
The $(i,j)^{th}$ entry of the $k$-cumulant $C_k(A)$ stabilizes after $d_{ij}$, where
\[
d_{ij}= \max_{t \mid \exists \gamma:i\rightsquigarrow j} \min_{\gamma} |\gamma|
\]
is the length of the longest shortest path from $i$ to $j$, disregarding times where no such path exists. 
We set $d_{ii}=0$, by convention.
We define the \define{temporal diameter} of a TVG $\cG$ to be
\[
\diam (\cG)=\max_{ij} d_{ij}.
\]
Consequently, the Kleene star $A^*$ convergences for $r\geq \diam (\cG)$.
\end{proposition}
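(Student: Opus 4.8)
The plan is to argue one time-slice at a time: reduce the claim to the classical fact that reachability in a finite directed graph is witnessed by a shortest path, and then repackage using the snapshot semi-ring homomorphism $\snap_t$ of \Cref{def:snapshot}. I would first dispose of two degenerate cases. For a diagonal entry the identity term forces $C_k(A)_{ii}=\R$ for every $k\ge 0$, so that entry is stable from the outset, matching the convention $d_{ii}=0$. If $i\neq j$ but $j$ is never reachable from $i$, then \Cref{cor:union-walk-lifetimes} gives $A^m_{ij}=\varnothing$ for all $m\ge 1$, so the $(i,j)$ entry is constantly $\varnothing$ and the statement holds vacuously (the index set of the defining maximum is empty). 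The remaining case is $i\neq j$ with $j$ reachable from $i$ at some time.

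For that case I would fix the pair $i\neq j$ and an arbitrary time $t\in\R$, and exploit that $\snap_t:\Mat_n(\powR)\to\Mat_n(\Bool)$ is a semi-ring homomorphism: it preserves sums, powers, and the identity matrix, hence commutes with the polynomial $C_k(A)=I+A+\cdots+A^k$, giving $\snap_t(C_k(A))=C_k(\snap_t(A))$. Writing $G_t$ for the ordinary directed graph with Boolean adjacency matrix $B_t:=\snap_t(A)$, the classical reading of Boolean matrix powers gives $C_k(B_t)_{ij}=\top$ exactly when there is a walk of length at most $k$ from $i$ to $j$ in $G_t$; since a shortest walk never repeats a vertex (otherwise the repeated loop could be excised), this holds iff $d_{G_t}(i,j)\le k$, where $d_{G_t}(i,j):=\min_{\gamma:i\rightsquigarrow j}|\gamma|$ is the distance in the snapshot (taken to be $\infty$ when $j$ is unreachable from $i$). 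A one-line computation from \Cref{def:snapshot} gives $t\in N_{ij}\iff\snap_t(N)_{ij}=\top$ for any matrix TVG $N$, so altogether $t\in C_k(A)_{ij}\iff d_{G_t}(i,j)\le k$.

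The final step is to compare $k$ with $d_{ij}$. By definition $d_{ij}=\max\{\,d_{G_t}(i,j)\mid j\text{ is reachable from }i\text{ at time }t\,\}$, so $d_{G_t}(i,j)\le d_{ij}$ whenever $j$ is reachable from $i$ at time $t$. Hence for $k\ge d_{ij}$ and any $t$: if $d_{G_t}(i,j)\le k<\infty$ then $t$ lies in that index set, so $d_{G_t}(i,j)\le d_{ij}$; conversely $d_{G_t}(i,j)\le d_{ij}$ forces $d_{G_t}(i,j)\le k$. Thus $t\in C_k(A)_{ij}\iff t\in C_{d_{ij}}(A)_{ij}$ for every $t$, i.e.\ $C_k(A)_{ij}=C_{d_{ij}}(A)_{ij}$ for all $k\ge d_{ij}$ --- the asserted stabilization. (One could instead invoke the injectivity of $\Psi$ from \Cref{thm:boolean-curry} in place of this pointwise-in-$t$ argument.) Taking the maximum over all pairs, each entry of $C_k(A)$ is constant once $k\ge\diam(\cG)$, and since $A^*_{ij}=\bigcup_{k\ge 0}C_k(A)_{ij}$ is a monotone union in $\powR$, this yields $A^*=C_r(A)$ for every $r\ge\diam(\cG)$.

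The hard part will be the middle paragraph: transporting ``a shortest walk is a path'' through the snapshot homomorphism and handling the reachability dichotomy and the unbounded-distance case cleanly; everything afterwards is bookkeeping with the inclusion order on $\powR$. A snapshot-free route is also available --- using \Cref{cor:union-walk-lifetimes}, one shortens at each individual time of an interval of activity a length-$k$ walk to one of length at most $d_{ij}$ --- but this is exactly the snapshot argument written out by hand, so I would keep the homomorphism formulation for brevity.
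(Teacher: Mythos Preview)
Your proposal is correct and follows essentially the same approach as the paper's proof: both reduce to the Boolean world via the snapshot/$\Psi$ homomorphism (which commutes with the polynomial $C_k$), then invoke the classical fact that reachability in a finite directed graph is witnessed by a shortest path, and read the result back entrywise in $\powR$. Your version is more explicit about the degenerate cases ($i=j$, never-reachable pairs) and the final bookkeeping, but the core argument is the same.
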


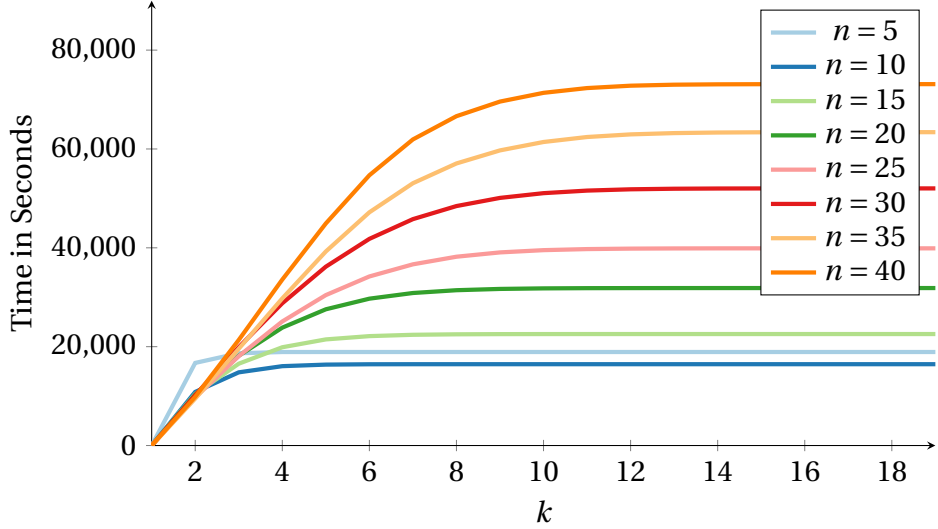
\begin{figure}[h]
\begin{center}
\begin{tikzpicture}
\begin{axis}[
    width=12cm,
    height=7.5cm,
    title={Aggregate Average $L(\mu^k)(i, j) = $ sum of intervals in entry $(i, j)$ \\
        {\footnotesize $(30)$ $n$-Node Simulations}},
    xlabel={$k$},
    ylabel={Time in Seconds},
    ymin=0,
    ymax=90000,
]
\foreach \yindex/\nval in {1/5,2/10,3/15,4/20,5/25,6/30,7/35,8/40} {
    \edef\temp{\noexpand\addlegendentry{$n=\nval$}}
    \addplot+[ultra thick] table[y index = \yindex]
        {simulations/average_lifetime_matrix_aggregates_today.dat};
    \temp
}
\end{axis}
\end{tikzpicture}
\end{center}

    \caption{Average lifetime curves across 30 simulations for node systems ranging from $n=5$ to $n=40$, in increments of size five. Node systems larger than $n=30$ appear to be strongly connected over a one-day simulation, i.e., the Kleene star appears to converge to a constant matrix with value $86400$---the number of seconds in a day.}
    \label{fig:STARLINK-confidence-summary}
\end{figure}



The upshot of \Cref{prop:Kleene-star-diam} is that one need only understand the maximum diameter of snapshots of $M$ measured across all possible values of $t\in\R$.
Based on the simulation described in \Cref{fig:diam-histogram}, the temporal diameter for that 100 node STARLINK simulation is actually 7.


However, further simulation indicates that for TVGs coming from STARLINK, the convergence radius for the Kleene star is actually much smaller than $r=7$, which would be the prediction of \Cref{prop:Kleene-star-diam} and \Cref{fig:diam-histogram}.
By inspecting \Cref{fig:STARLINK-10-and-100}, one can gleam convergence radii as a function of the number of nodes---this is indicated by where the lifetime curves plateau.
For more nodes, e.g., $n=150$ (not shown), this convergence radius appears to be even smaller than $r=7$ and is conjecturally less than $5$ for STARLINK systems with more than 150 nodes.
From purely geometric considerations, depicted in \Cref{fig:starlink-conjectured-radius}, the limiting radius should be $r=3$, but our experiments have not realized this limit.
As such we state weak and strong versions of our conjecture.

\begin{figure}[h]
    \centering
    \includegraphics[width=0.6\textwidth]{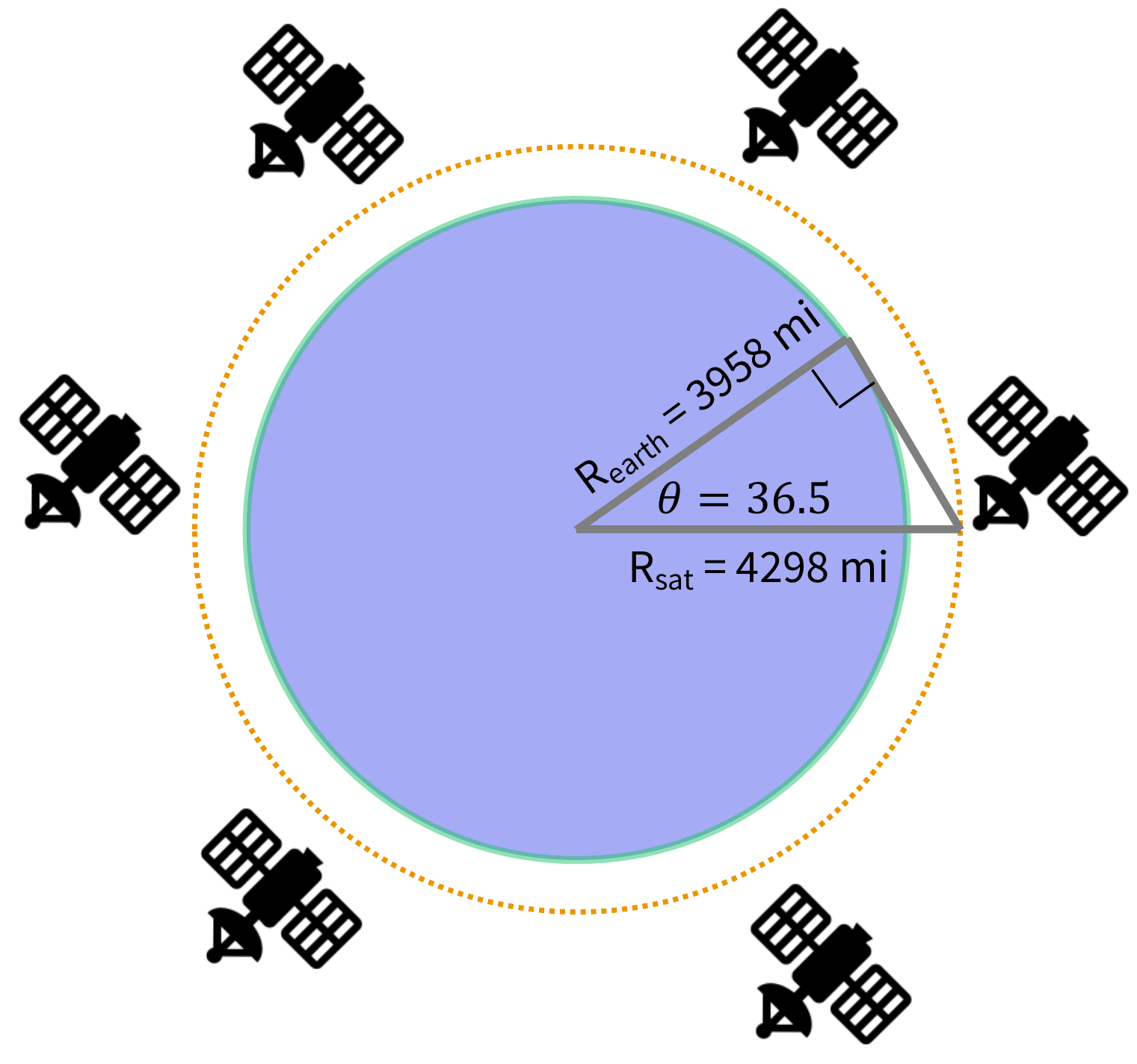}
    \caption{Starlink satellites typically orbit the Earth at an altitude of $\sim 340$ miles. Using line of sight considerations, each satellite should have approximately a $73^{\circ}$ view angle, which implies that any two antipodal satellites can be connected by a walk of length at most 3, assuming a sufficient number of satellites.}
    \label{fig:starlink-conjectured-radius}
\end{figure}

\begin{conjecture}[Convergence Radius for STARLINK TVGs]\label{conjecture:r=3}
\begin{enumerate}
    \item[A.] (Weak Version) With high probability as $n\to \infty$ a randomly sampled sub-TVG of STARLINK will have a Kleene star that converges for $r \leq 5$.
    \item[B.] (Strong Version) With high probability as $n\to \infty$ a randomly sampled sub-TVG of STARLINK will have a Kleene star that converges at $r=3$.
\end{enumerate}
\end{conjecture}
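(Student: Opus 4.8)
The plan is to move the statement off of the matrix series $C_{k}(A)$ and onto the geometry of the time-slices, and then run a random-geometric-graph argument on the sphere. First I would invoke \Cref{prop:Kleene-star-diam}: the Kleene star $A^{*}$ converges precisely at $r=\diam(\cG)=\max_{i,j}d_{ij}$. That proposition supplies $r\le\diam(\cG)$; the reverse inequality holds because, at a pair $(i^{*},j^{*})$ and time $t^{*}$ realizing $\diam(\cG)$, the shortest $i^{*}\rightsquigarrow j^{*}$ walk has length exactly $\diam(\cG)$, so $t^{*}$ (and a whole interval around it) first enters the $(i^{*},j^{*})$ entry at step $\diam(\cG)$ and not before. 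Next, by \Cref{thm:boolean-curry} and \Cref{def:snapshot}, $d_{ij}$ is nothing but the maximum, over those times $t$ at which $i$ and $j$ lie in one component of the snapshot $\snap_{t}(M)$, of the $\snap_{t}(M)$-graph distance from $i$ to $j$. So the conjecture reduces to: with probability $\to 1$ as $n\to\infty$, at \emph{every} instant $t$ of the day-long window $W$ the snapshot $\snap_{t}(M)$ is connected and has diameter $\le 5$ (part A), respectively exactly $3$ (part B).

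I would then model STARLINK in the dense limit the conjecture contemplates: the $n$ sampled satellites sit on the orbital shell $S_{R}$, $R=R_{\oplus}+h$, and at each fixed time are (approximately) i.i.d.\ uniform on $S_{R}$ --- or uniform on STARLINK's true inclined shells, or a sequence of ever-denser STARLINK-like constellations; none of these refinements changes the argument. Two satellites are adjacent in $\snap_{t}(M)$ exactly when their line of sight clears the Earth, i.e.\ when their central angle is below a threshold $\Theta$ fixed by $R_{\oplus}$ and $h$; \Cref{fig:starlink-conjectured-radius} records $\Theta\approx 73^{\circ}$ for STARLINK's altitude, in particular $\Theta>60^{\circ}$. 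Thus each snapshot is a spherical random geometric graph in its \emph{dense} regime. The deterministic engine is the ``dense relay'' observation: if every spherical cap of angular radius $\delta$ on $S_{R}$ contains at least one satellite, then any two satellites are joined by a walk of length $\le\lceil\pi/(\Theta-2\delta)\rceil$ --- drop waypoints at angular spacing $<\Theta-2\delta$ along the connecting geodesic, pick a satellite within $\delta$ of each, and note consecutive picks are within $\Theta$ hence adjacent (the same chain of overlapping caps also makes $\snap_{t}(M)$ connected). Sending $\delta\to 0$ yields the diameter bound $\lceil\pi/\Theta\rceil$, which is $\le 3$ because $\Theta>\pi/3$, and a fortiori $\le 5$.

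It remains to show that the ``relays everywhere, at every time'' event $E_{n}(\delta)=\{\text{every }\delta\text{-cap on }S_{R}\text{ contains a satellite for all }t\in W\}$ has probability $\to 1$ for each fixed $\delta>0$. Since each satellite's angular speed is bounded by a fixed constant $\omega$, it suffices to check the stronger $\tfrac{\delta}{2}$-cap covering on a time grid of mesh $<\delta/2\omega$ --- a constant number $N=N(\delta,\omega,|W|)$ of instants --- and then appeal to continuity. A union bound over those $N$ instants and the $O(\delta^{-2})$ caps needed to cover $S_{R}$, each empty with probability $(1-c\delta^{2})^{n}$, gives $\Pr[E_{n}(\delta)^{c}]\le N\cdot O(\delta^{-2})\cdot(1-c\delta^{2})^{n}\to 0$. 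On $E_{n}(\delta)$ the dense-relay bound supplies the stated diameters at all $t$, proving part A with room to spare and the upper half of part B. For the matching lower bound, the largest pairwise central angle among the satellites exceeds $2\Theta$ with probability $\to 1$ (it tends to $\pi>2\Theta$), and such a pair admits no walk of length $\le 2$ --- a single relay within $\Theta$ of both endpoints would violate the spherical triangle inequality --- so $\diam(\cG)\ge 3$; combined with the reduction, the convergence radius is exactly $3$.

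The hard part is twofold. First, making the time-uniformity honest for \emph{STARLINK}: the satellites are not a free i.i.d.\ cloud but points moving on a handful of inclined circular orbits, so the naive union bound must be replaced by a quantitative equidistribution estimate showing that a uniform sub-sample of the actual constellation finely covers the relevant latitude band at all times --- this is where the real analytic work sits, and it is the only serious obstruction to part A. Second --- and this is exactly why part B is a conjecture rather than a theorem --- part B is only as sharp as the geometric inequality $\Theta>\pi/3$, equivalently $h>R_{\oplus}(\sec 30^{\circ}-1)$: were the true per-hop reach $\Theta$ to fall in $(\pi/5,\pi/3)$, the method would yield only $r\le 4$, entirely consistent with the simulations of \Cref{fig:STARLINK-10-and-100} never quite descending to $r=3$. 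Part A, by contrast, is robust to this uncertainty and should follow in full once the equidistribution input is in place.
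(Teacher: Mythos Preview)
The statement you are trying to prove is labeled a \emph{conjecture} in the paper and is left open there; the paper offers no proof, only the geometric heuristic of \Cref{fig:starlink-conjectured-radius} (the $73^{\circ}$ view angle) together with the brief Vietoris--Rips remark that follows the conjecture.  So there is no ``paper's own proof'' to compare against.

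Your proposal is not a competing proof but a substantial upgrade of the paper's heuristic into an honest strategy.  The reduction is exactly right: \Cref{prop:Kleene-star-diam} plus your observation that the convergence radius is \emph{equal} to (not merely bounded by) $\diam(\cG)$ collapses the question to bounding the diameter of every snapshot, and \Cref{thm:boolean-curry} makes that legitimate.  The dense-relay/coverage argument on the sphere is the standard and correct way to bound the diameter of a random geometric graph in the dense regime, and your union bound over a finite time grid with the angular-speed continuity step is the right way to push a single-time estimate to all of $W$.  Your lower bound for part B (antipodal-ish pairs force at least three hops) is also correct.  You have, in effect, written down what a proof of part A would look like modulo one analytic input, and isolated precisely why part B is more delicate.

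The gaps you name are the real ones.  First, the i.i.d.-uniform model is not STARLINK: the actual constellation lives on finitely many inclined circular orbits, and a uniform sub-sample of those is not uniform on $S_{R}$; the equidistribution estimate you flag is genuinely the missing piece, and the paper does not supply it either.  Second, part B lives or dies on $\Theta>\pi/3$.  The paper's own comment that ``a discrete sample may require an additional two links'' already signals unease here, and a direct tangent-line computation at $h\approx 340$ miles gives a line-of-sight central angle noticeably below $60^{\circ}$, which would push your bound to $\lceil\pi/\Theta\rceil=4$ rather than $3$.  You explicitly allow for this in your last paragraph, which is the honest thing to do; it means your strategy, as written, delivers part A (conditionally on the equidistribution input) but at best the inequality $r\le 4$ or $r\le 5$ in place of the sharp $r=3$.
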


Either of these conjectures can be viewed as asserting properties about the 1-skeleton of the Vietoris-Rips complex of $n$ nodes sampled uniformly from the unit sphere with connectivity radius $r=\pi/3$.
Although in theory a length three path should connect any two points on the sphere, a discrete sample may require an additional two links: one up to the diameter three sub-skeleton and one down to a receiving node.

\subsection{Non-Convergence of the Kleene Star: Semi-Rings for Propagation Delay}\label{sec:propagation-delay}

In all of the previous sections, we have focused on TVGs coming from space networking scenarios, such as STARLINK, where the speed of light is negligible.
In what follows we are going to investigate satellite systems with assets around the Earth's moon (Luna) and Mars.
In both of these settings, line-of-sight communication may require significant time due to propagation delay, even though messages are transmitted at the speed of light.
As a reminder, one-way light travel to the Moon takes about 1.2 seconds; this represents a round trip time (RTT) that begins to preclude standard or traditional feedback mechanisms for reliability in communications though reactivity is still possible.
Messages sent to Mars can take anywhere between 3 and 22 minutes, depending on the relative location of Earth and Mars in their respective orbits. When the RTT is measured in minutes, reliability in communications tends to be based on being proactive rather than reactive.
All of this necessitates a new semi-ring capable of modelling propagation delay.

In this section we introduce the Propagation Delay semi-ring, which is a combination of the Lifetime semi-ring of \Cref{def:lifetime-semi-ring} with a delay parameter.
For this semi-ring \Cref{thm:boolean-curry} fails to hold, which further illustrates why considering time-varying graphs as an $\R$-indexed family of simple directed graphs (or a graph sequence, for that matter) fails to capture systems with non-trivial propagation delay.
Moreover, the classical Kleene star convergence result of Carr\'e (Theorem  \Cref{thm:carre-convergence}) does not apply to the Propagation Delay Semi-Ring.
This is illustrated experimentally using a 17-node simulation of an Earth-Mars-Moon system, where a ping originating at one node ``echoes'' ad infinitum throughout the network.
This lack of convergence of the Kleene star may be interpreted as an algebraic characterization of the difficulty of deep space routing and communication.

The starting point for defining the Propagation Delay Semi-Ring is based on the observation that the collection of additive endomorphisms of a semi-ring $S$ forms a semi-ring as well; see \Cref{lem:end-semi-ring}.
If one---necessarily---takes a relativistic perspective on space communication, then a message transcribed by one observer during an interval of time $I \subseteq \R$, may communicate this message, only to have it received later during the time interval 
\[
\varphi^{\epsilon}(I):=I^{\epsilon}=\{x+\epsilon \mid x\in I\}.
\] 
Composing such delay operators leads to another semi-ring structure, which we now specify.



 \begin{definition}[Propagation Delay Semi-Ring] 
Consider the set $\subR \times [0,\infty)$ of lifetimes (\Cref{def:lifetime-poset}) along with possible delays.
This set can be equipped with addition and multiplication operations, as follows:
\begin{itemize}
    \item $(I,s) + (J, t) := (I \cup J, \max\{s, t\})$, and
    \item $(I,s) \otimes (J, t) := (I \cap (J - s) , s + t)$.
\end{itemize}
The neutral elements are $\n=(\varnothing,0)$ and $\e=(\R,0)$ respectively.
\end{definition}


\begin{figure}[h!]
\centering
\begin{tikzpicture}[->,auto,node distance=3cm,
      thick,main node/.style={circle,draw,font=\sffamily\Large\bfseries},scale=1.5]
    
      \node[main node] (1) at (-2,0) {$A$};
      \node[main node] (2) at (0,1) {$B$};
      \node[main node] (3) at (0,-1) {$C$};
      \node[main node] (4) at (2,0) {$D$};
    
      \path[every node/.style={font=\sffamily\small}]
          (1) edge[black] node [above=3mm] {([0,10]),1)} (2)
          (1) edge[black] node [below=3mm] {([0,10],3)} (3)
          (2) edge[black] node [above=3mm] {([9,15],3)} (4)
          (3) edge[black] node [below=3mm] {([9,10],2)} (4);
    \end{tikzpicture}
    \caption{Time-varying network of 4 nodes: each edge represents a connection, that is decorated with available time and delay time.}
    \label{fig:pds-cartoon}
\end{figure}
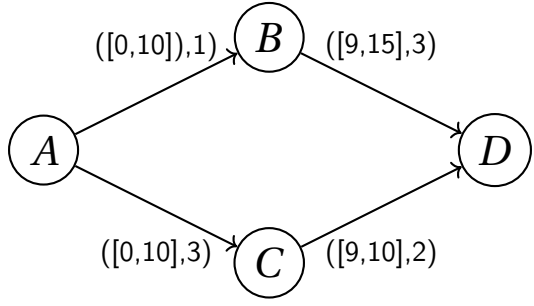




\begin{remark}[Three Semi-Rings and Their Relationships]
The lifetime semi-ring (\Cref{def:lifetime-semi-ring}) injects into the propagation delay semi-ring, which in turn injects into the endomorphism semi-ring of the lifetime semi-ring $\End(\subR)$, defined generally in \Cref{lem:end-semi-ring}.
To see this last injection, 
note that each $I \in \subR$ and delay $s \geq 0$ and, we consider an ``intersect and shift'' endomorphism $\varphi_I^{s}: \cL(\R) \to \cL(\R)$, defined as
    \[ \varphi_I^{s}(K)=\{x+s \mid x\in I\cap K\}.  \]
Under this assignment the neutral element $\n=(\varnothing,0)$ is sent to the ``everything maps to $\varnothing$'' operator and $\e=(\R,0)$ is sent to the identity transformation.
\end{remark}

\begin{example}[4 Node Cartoon Example]\label{ex:2-node-PDS}
    In \Cref{fig:pds-cartoon} we consider an augmentation of a TVG with delays.
    The path $A\to B \to D$ leads to a composite
    \[
    ([0,10],1)\otimes ([9,15],3) = ([0,10]\cap [9-1,15-1],1+3)=([8,10],4).
    \]
    The path $A\to C \to D$ leads to a composite
    \[
    ([0,10],3)\otimes ([9,10],2) = ([0,10]\cap [9-3,10-3],3+2)=([6,7],5).
    \]
    The entry associated to the 2-walk matrix in the entry corresponding to $A\to D$ communications is $A^2_{14}$, whose value is then
    \[
        ([8,10],4) + ([6,7],5) = ([6,7]\cup [8,10],5).
    \]
    As this example shows, the addition operation assumes a worst-case scenario where---perhaps---a message is fragmented and sent along the two different routes and an operator at node D needs to wait till both messages arrive to re-assemble/decode the message. This makes implicit assumptions on the storage capacity at the node D, which we assume is arbitrarily large.
\end{example}

\begin{figure}[h]
    \centering
    \includegraphics[width=\textwidth]{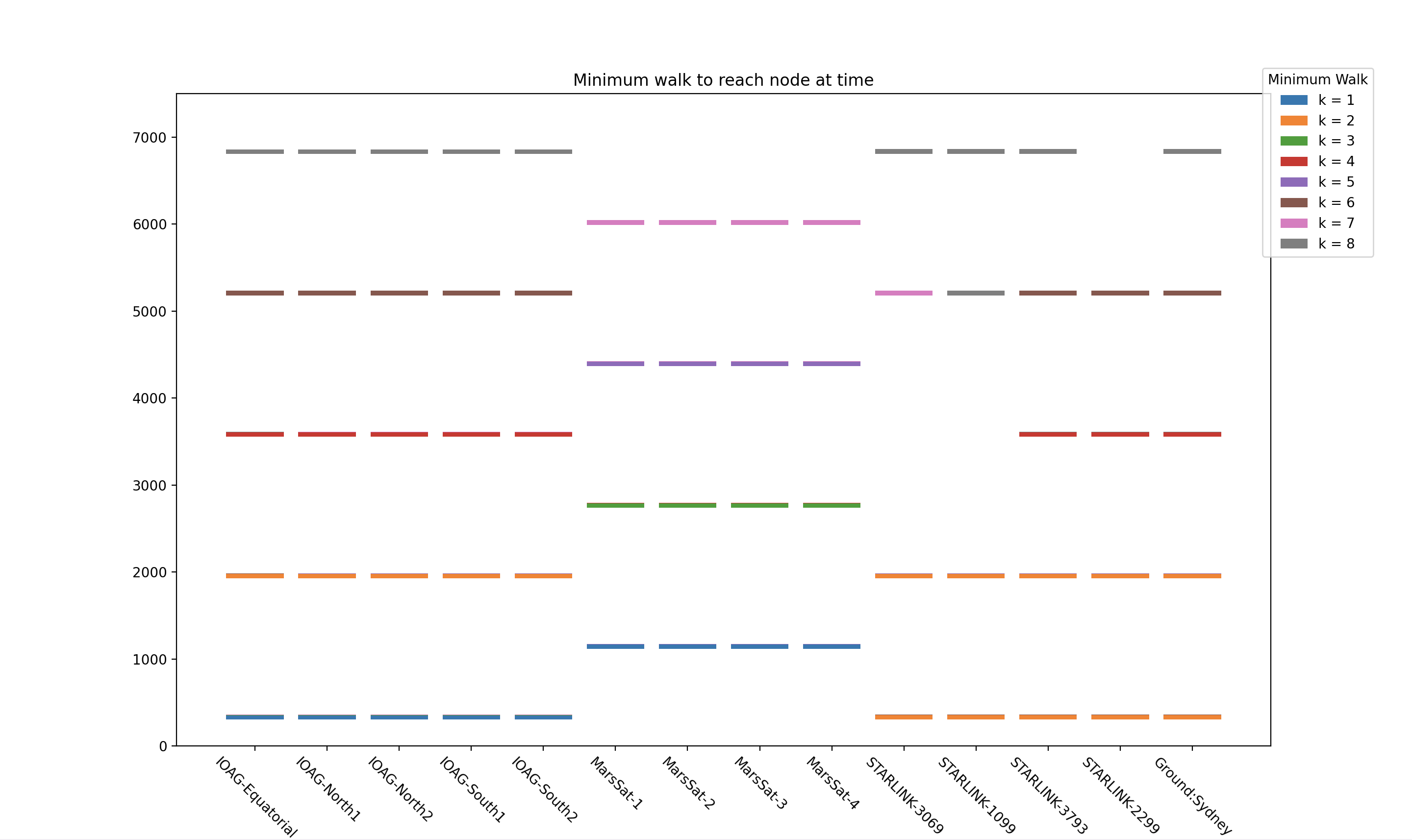}
    \caption{A ping is emitted at time $t=300$ from a ground station in Sydney. IOAG-recommended Lunar satellites receive the message less than 2 seconds later. This message is immediately relayed along all possible links, back to the Earth and on to Mars. Both Syndey and STARLINK satellites receive this repeated message as being the end of a length 2 walk. 
    Martian satelites receive the message from Sydney about 800 seconds later, at $t\approx 1100$, which broadcasts the message back to IOAG and STARLINK satellites. Messages are forwarded ad infinitum across the network, with walk length of the route indicated by color. This illustrates how the Kleene star fails to converge for the propagation delay semi-ring.}
    \label{fig:sydney-ping}
\end{figure}

Finally, we illustrate a simulated example using 14 assets:
\begin{itemize}
    \item One ground station in Sydney, Australia, which is the source of the ``ping'' at $t=300$.
    \item Four STARLINK satellites, which are out of view when the ping is first transmitted.
    \item Five (IOAG) Lunar satellites receive Sydney's ping less than 2 seconds later.
    \item Four Martian satellites receive the message between 13 and 14 minutes later.
\end{itemize}
Whenever an asset receive the ping, it automatically repeats the ping to all connected assets, which are received later according to the number of light seconds separating each asset.
STARLINK satellites move in and out of view and are connected by different length walks as the simulation evolves.
This sort of ``bent pipe'' communication is modelled via row-vector multiplication
\[
v^t(I+ A + A^2 + A^3 + \cdots + A^8 + \cdots)
\]
with the vector $v$ have a singleton set $\{300\}$ in the Sydney entry and $\varnothing$ in every other entry.
The vector of arrival times is depicted in \Cref{fig:sydney-ping}, color-coded by the length of the walk of the route traversed in the walk matrix $A^k$.

\subsection{Review of Prior Semi-Rings for Graph Optimization Problems}\label{sec:semi-ring-review-graph-optimization}

In this section we review some prior work on semi-rings for solving the all-pairs shortest-path problem for a weighted graph and the shortest path problem with time-inhomogeneous edges, with and without capacity constraints.

\subsubsection{Tropical/Min-Plus Semi-Ring}\label{sec:tropical}

As mentioned in \Cref{ex:2-node-PDS}, the propagation delay semi-ring uses a ``max-plus'' semi-ring in the second coordinate.
This semi-ring is dual to a more popular semi-ring for graph optimization problems.

\begin{definition}[Tropical Semi-Ring]\label{def:tropical}
Consider the set $\mathbb{T}=\mathbb{R}\cup\{\infty\}$. If we define $a\oplus b=\min\{a,b\}$ and $a\odot b=a+b$, then the set $\mathbb{T}$ with $\n=\infty$ and $\e=0$ defines the \define{tropical} or \define{min-plus semi-ring}.   
\end{definition}

\begin{remark}[All Pairs Shortest Path Problem]\label{rmk:shortest-path-Kleene-star}
    If $G$ is a simple directed graph with lengths asigned to each edge, then viewing this graph as weighted in $\mathbb{T}$ and calculating the Kleene star of this weighted adjacency matrix solves the \define{all pairs shortest path problem}, i.e., $A^*_{ij}$ will have the length of the shortest path from $i$ to $j$ in this entry.
    We note that if a cycle has negative weights, then the hypotheses of \Cref{thm:carre-convergence} fail to hold, as one can create an arbitrarily short (negative) length path by traversing this cycle repeatedly.
\end{remark}

\subsubsection{Tropical Endomorphism Semi-Ring for Time-Varying Networks}\label{sec:tropical-endo}

As \Cref{rmk:shortest-path-Kleene-star} demonstrates, the shortest path problem can be solved by computing the Kleene star of the adjacency matrix weighted in the tropical semi-ring $\mathbb{T}$.
A model which generalizes this perspective, but also allows time-varying edge lengths uses a certain sub-semi-ring of $\End(\mathbb{T})$, which we now define.

\begin{definition}[Non-Decreasing Endomorphisms]\label{def:time-increasing-semi-ring}
    Let $\mathbb{W}$ be the set of all endomorphisms \linebreak ${w: \mathbb T \to \mathbb T}$ such that $w$ is non-decreasing and $\lim\limits_{t\to \infty}{w(t)} = \infty$. We define the operation $\oplus$ and $\otimes$ on $\mathbb W$ as follows: for all $t \in \mathbb T$,
\begin{itemize}
    \item $(w\oplus v )(t) = \min\{w(t), v(t)\} $, and
    \item $(w\otimes v )(t) = w(v(t))$.
\end{itemize}
\end{definition}

\begin{remark}
    The intuition behind \Cref{def:time-increasing-semi-ring} is that a function $w_{ij}\in \mathbb{W}$ dictates when a message transmitted at time $t$ from node $i$ will arrive at node $j$---this is the time $w_{ij}(t)$.
    Considering the Kleene star of such a matrix will also solve the earliest time of arrival for a traffic network, where the density of traffic can vary over time.
    The non-decreasing condition in \Cref{def:time-increasing-semi-ring} is also clear in the traffic example, as departing later in the day will never result in an earlier time of arrival.
\end{remark}

\begin{remark}[Connections to Delay Tolerant Networking]\label{rmk:tropical-DTN}
    \cite[pg.33]{Baras2010} observes that \Cref{def:time-increasing-semi-ring} can also be used to model opportunistic networks and delay tolerant networking, which is the current paradigm for space networking.
    Indeed, one can embed our lifetime semi-ring into this model as well, but will sacrifice the ease of use and the connections to topological data analysis described in \Cref{sec:TDA-and-metrics}.
\end{remark}

Finally, we remark that capacity constraints can be added to \Cref{def:time-increasing-semi-ring} without issue.

\begin{remark}[Capacity Constrained Time-Variate Routing]
    Using a similar approach to what was described above, we can construct an endomorphism-based semi-ring that captures capacity-constrained delivery on time-varying networks. 
    Specifically, given a capacity $C > 0$, for each $w \in \mathbb W$, there exists an $W \in \mathbb W$, such that:
    \[W(t) = \min\big\{s : \int\limits_{t}^{s}{w(x)dx}= C \big\}.\]
    One can then compose these elements just as in \Cref{def:time-increasing-semi-ring}.
\end{remark}


\subsection{The Universal Contact Semi-Ring (UCS) for Time-Variate Routing}\label{sec:UCS}

We finish our discussion of semi-rings by introducing a novel semi-ring 
called the \define{Universal Contact Semi-Ring (UCS)} that allows us to model a large class of networking problems.
Most significant for our application of semi-rings to space networking is the observation that the UCS allows us to model the "store and forward" protocol \cite{rfc4838}, which is the main paradigm for delay tolerant networking (DTN) and cannot be captured via the other semi-rings described above.
After providing the necessary definitions, we work through a synthetic example of how this semi-ring describes the store and forward networking protocol.

\begin{definition}\label{def:UCS}
The \define{Universal Contact Semi-Ring (UCS)} $\mathcal{C}$ consists of a collection of maps from $\R$ to the powerset of $\,\R$,
\[
    \mathcal{C} = \{f:\R \to 2^{\R} \},
\]
together with addition and multiplication operations defined as
\[f \oplus g(t) = f(t) \cup g(t) \quad \text{ and } \quad f \odot g (t) = \bigcup_{x \in f(t)}g(x + t) + x, \]
respectively. Here the formal sum $A + x$ denotes the Minkowski sum, i.e
\[ A + x = \{a + x, a \in A\}.\]
The additive identity of $\mathcal C$ is the constant function 
\begin{align*}
    0_{\mathcal C}: \R & \to 2^{\R} \\
                    t & \mapsto \emptyset,
\end{align*}
while the multiplicative identity is the constant function
\begin{align*}
    1_{\mathcal C}: \R & \to 2^{\R} \\
                    t & \mapsto \{0\}.
\end{align*}
\end{definition}


Although the Universal Contact semi-ring is idempotent, this semi-ring is poorly-behaved due to its lack of commutative and integral properties, as well as a total order.
However, we believe the UCS is suitable for modeling in real world application:
In the context of networking, a map $f \in \mathcal C$ represents the possible delivery delays at a given time: if $x \in f(t)$, then a message can be transported along the edge to arrive at time $t + x$. 
This allows the UCS to encapsulate contact windows, time forwarding, changing communication times, as well as storage---limited or unlimited---at a node.

Moreover, UCS contains most of the semi-rings that naturally arise in routing problems, described above, as subquotients, i.e., quotients of sub-semi-rings of UCS. The proof of the next result is presented under \Cref{prop:UCS-sub-quot-appendix} in the appendix.

\begin{proposition}\label{prop:UCS-sub-quot}
Let $\mathcal C$ denote the Universal Contact Semi-Ring (\Cref{def:UCS}). $\mathcal C$ contains 
    \begin{enumerate}
        \renewcommand{\theenumi}{(\alph{enumi})}
        \item an injective image of the boolean semi-ring $S$; 
        \item a sub-semi-ring which is isomorphic to the TVG semi-ring;
        \item a sub-semi-ring which surjects onto the tropical semi-ring $\mathbb T$;
        \item a sub-semi-ring which surjects onto the propagation delay semi-ring ; and
        \item a sub-semi-ring which surjects onto the function endomorphism semi-ring $\mathbb W$.
    \end{enumerate}
\end{proposition}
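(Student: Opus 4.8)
The plan is to exhibit, for each listed semi-ring, an explicit embedding into $\mathcal{C}$ (or onto a subquotient of $\mathcal{C}$), and then verify compatibility with $\oplus$ and $\odot$. The unifying idea is that an element $f\in\mathcal{C}$ records, at each time $t$, a set $f(t)$ of admissible \emph{delays}; so a time-homogeneous delay semi-ring element should be encoded as a constant-in-$t$ function, while the lifetime/TVG data should be encoded by toggling $f(t)$ between $\{0\}$ (edge live, zero delay) and $\varnothing$ (edge dead) as $t$ varies. The multiplicative identity $1_{\mathcal{C}}(t)=\{0\}$ already tells us that ``$\{0\}$ at time $t$'' means ``pass through instantaneously,'' which is exactly the role of $\R$ (the multiplicative unit) in the lifetime semi-ring.

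First I would handle (b), the TVG semi-ring $\Mat_n(\powR)$—or really its scalar version $\powR$—since it is the sharpest embedding and the others follow the same template. Define $\iota:\powR\to\mathcal{C}$ by $\iota(I)(t)=\{0\}$ if $t\in I$ and $\iota(I)(t)=\varnothing$ if $t\notin I$. Then $\iota(I\cup J)=\iota(I)\oplus\iota(J)$ is immediate from the pointwise union. For multiplication, $\iota(I)\odot\iota(J)(t)=\bigcup_{x\in \iota(I)(t)}\big(\iota(J)(x+t)+x\big)$; if $t\in I$ this is $\iota(J)(t)+0=\iota(J)(t)$, and if $t\notin I$ it is the empty union, so $\iota(I)\odot\iota(J)(t)=\{0\}$ exactly when $t\in I\cap J$, i.e. $\iota(I)\odot\iota(J)=\iota(I\cap J)$. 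Also $\iota(\varnothing)=0_{\mathcal{C}}$ and $\iota(\R)=1_{\mathcal{C}}$, and $\iota$ is visibly injective, giving (b); restricting further to $I\in\{\varnothing,\R\}$ gives (a). For (d), the propagation delay semi-ring on $\subR\times[0,\infty)$, I would map $(I,s)$ to the function $t\mapsto\{s\}$ if $t\in I$ and $\varnothing$ otherwise; checking that $\odot$ in $\mathcal{C}$ reproduces $(I\cap(J-s),s+t)$ is a short computation using the Minkowski-sum shift by $x=s$, and $\oplus$ gives $(I\cup J,\max)$ only up to identifying $\{s\}\cup\{t\}$ with $\{\max(s,t)\}$—so this lands in $\mathcal{C}$ only as a subquotient (quotient by ``collapse a finite delay-set to its max''), consistent with the ``surjects onto'' wording. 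For (c), the tropical semi-ring $\mathbb{T}$, I would send $a\in\R$ to the constant function $t\mapsto\{a\}$ and $\infty$ to $0_{\mathcal{C}}$; then $\odot$ in $\mathcal{C}$ yields $t\mapsto\{a+b\}$ (tropical multiplication), while $\oplus$ yields $t\mapsto\{a,b\}$, which surjects onto $\min\{a,b\}$ under the ``take the minimum of the delay-set'' quotient—again a subquotient, not an embedding. For (e), the non-decreasing endomorphism semi-ring $\mathbb{W}$, I would represent $w\in\mathbb{W}$ by $f_w(t)=\{w(t)-t\}$, so that the recorded delay at time $t$ is $w(t)-t$; the $\odot$ formula $\bigcup_{x\in f(t)}f_v(x+t)+x$ with $x=w(t)-t$ gives the single delay $v(w(t))-w(t)+w(t)-t=v(w(t))-t$, i.e. $f_{v\circ w}$ (matching $\otimes$ in $\mathbb{W}$), while $\oplus$ gives the two-element delay-set $\{w(t)-t,v(t)-t\}$ which surjects onto $\min\{w(t),v(t)\}-t$, i.e. onto $w\oplus v$ in $\mathbb{W}$, under the minimum quotient.

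In each case there is a routine verification that the chosen collection of functions is closed under $\oplus$ and $\odot$ (so that it really is a sub-semi-ring of $\mathcal{C}$) and that the minimum/maximum collapse used in (c), (d), (e) is a semi-ring congruence, so that the quotient is well-defined. The main obstacle, and the one point deserving care, is precisely this last issue: $\mathcal{C}$'s addition is genuine union of delay-sets, which is strictly finer than the $\min$/$\max$ aggregation built into $\mathbb{T}$, the propagation delay semi-ring, and $\mathbb{W}$. One must check that the equivalence ``$f\sim g$ iff $\min f(t)=\min g(t)$ for all $t$'' (and its $\max$ analogue) is compatible with both operations—compatibility with $\oplus$ is clear since $\min$ of a union is the min of the mins, but compatibility with $\odot$ requires that $\min\bigl(\bigcup_{x\in f(t)}g(x+t)+x\bigr)$ depends on $f(t)$ only through $\min f(t)$, which uses the non-decreasing/monotonicity hypotheses on the relevant elements (this is exactly where the $\mathbb{W}$ condition $\lim_{t\to\infty}w(t)=\infty$ and non-decreasingness earn their keep). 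Once the congruence check is in place for each of (c)–(e), the proposition follows by assembling the five explicit maps.
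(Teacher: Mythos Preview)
Your proposal is correct and follows essentially the same constructions as the paper: the indicator-with-zero-delay map for (a) and (b), constant finite-delay-set functions quotiented by $\min$ for (c), indicator-with-singleton-delay quotiented by $\max$ for (d), and the $t\mapsto\{w(t)-t\}$ representation quotiented by $\min$ for (e). Your explicit attention to verifying that the $\min$/$\max$ collapse is a semi-ring congruence---particularly for $\odot$, where monotonicity is needed---is in fact more careful than the paper's own treatment, which simply asserts the relevant maps are homomorphisms.
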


\begin{example}\label{ex:store-and-forward} 
\begin{figure}[h!]
\centering
\begin{tikzpicture}[->,auto,node distance=3cm,
      thick,main node/.style={circle,draw,font=\sffamily\Large\bfseries},scale=1.5]
    
      \node[main node] (1) at (-2,0) {$A$};
      \node[main node] (2) at (0,1) {$B$};
      \node[main node] (3) at (0,-1) {$C$};
      \node[main node] (4) at (2,0) {$D$};
    
      \path[every node/.style={font=\sffamily\small}]
          (1) edge[black] node [above=3mm] {([0,1]),10)} (2)
          (1) edge[black] node [below=3mm] {([0,1],0)} (3)
          (2) edge[black] node [above=3mm] {([10,11],5)} (4)
          (3) edge[black] node [below=3mm] {([5,6],0)} (4);
    \end{tikzpicture}
    \caption{Time-varying network of 4 nodes: each edge represents a connection,\\ decorated with available time and delay time.}
    \label{fig:store}
\end{figure}
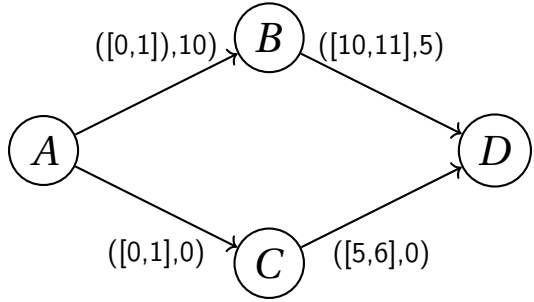
Let $\mathcal N$ be a time-varying network with 4 nodes $A, B, C, $ and $D$ as described in \Cref{fig:store}.
One can endow this network with the universal contact semi-ring $\mathcal C$ by treating the edge $([a,b], \omega)$ as the map:
\[ t \mapsto \begin{cases}\{\omega\} & \text{for  } t \in [a,b]\\ \emptyset & \text{ otherwise.}\end{cases} \]
In order to model the potential storage of information at a node, we add a self-loop on each node, representing via the maps that sends $t \mapsto [0, \infty)$.
Using the propagation delay semi-ring, or our other semi-rings, the only possible route from $A$ to $B$ would be the route $A\to B\to D$ with delay $15$.
However, in $\mathcal C$, the route $A \to C \to C \to D$ is permitted and has delay $5$.

The semi-ring weight of the route $A \to B \to D$, obtained by multiplying the edge weights in the path, is given by the map:
\[ t \mapsto \begin{cases}\{15\} & t \in [0,1]\\ \emptyset &t \not\in [0,1]\end{cases}\]
while the semi-ring weight of the route $A \to C \to C \to D$ will be given by the map that sends
\[t \mapsto \begin{cases}[5-t,6-t] & t \in [0,1]\\ \emptyset & t \not\in [0,1]\\\end{cases}\]
So in addition to allowing for another route, assuming that nodes are given a storage buffer, we also can see that in terms of total route time, for the route $A \to C \to C \to D$, we can send our message at any time within $[0,1]$ and still arrive by time $5$.

We note that limited storage buffer at each node can be encapsulated by modifying the self-loop maps as $t \mapsto [0, a]$, where  $[0, a]$ is a finite interval.
\end{example}

\begin{figure}[h]
    \centering
    \includegraphics[width=\textwidth]{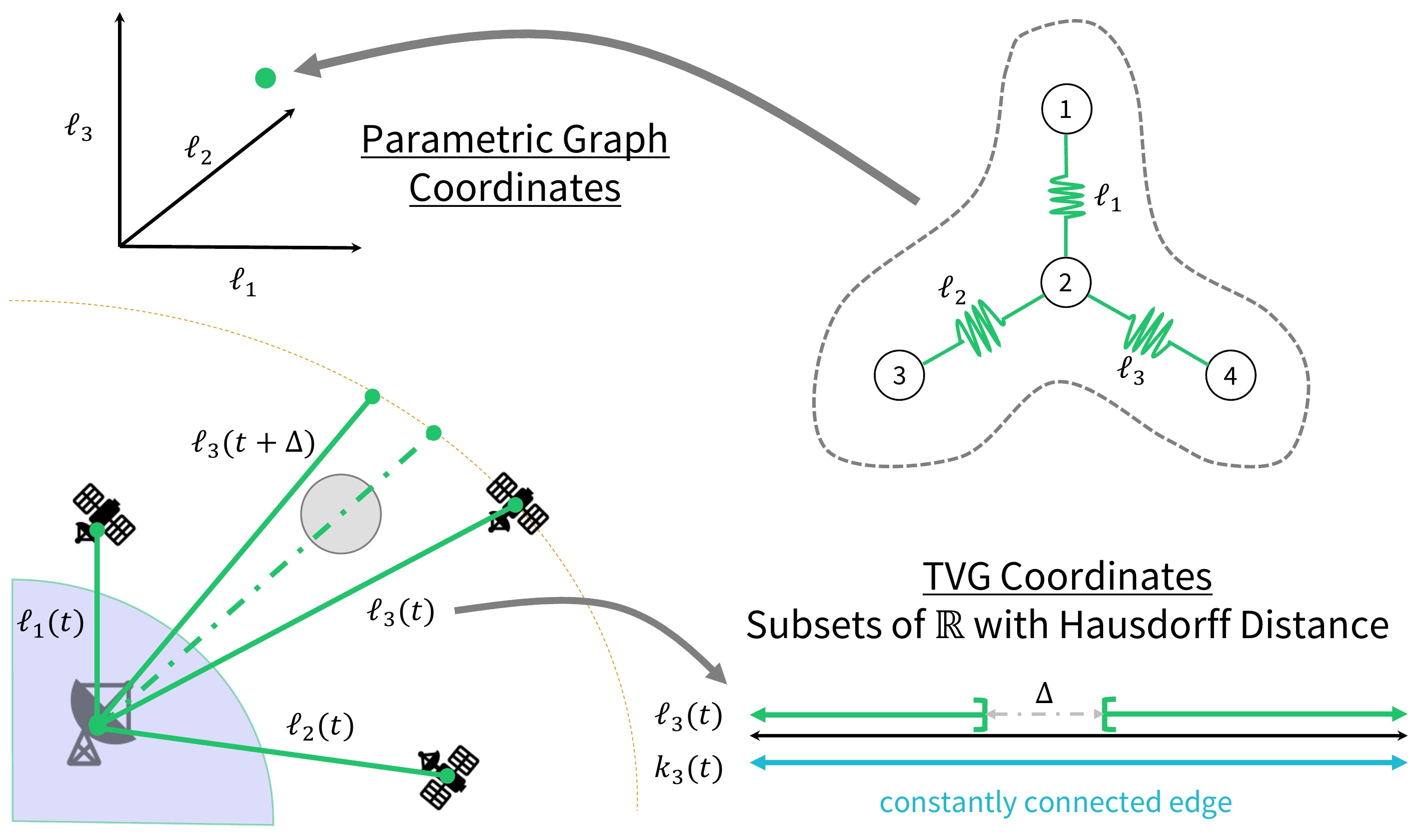}
    \caption{Models that treat edge lengths or weights as continuously varying parameters are fundamentally ill-equipped to handle removal of edges or other changes in graph topology. Indeed, if $\ell_3$ were removed in the parametric model, the value of the third coordinate becomes ambiguous. By contrast, the TVG model coordinatizes disruptions in service appropriately and the Hausdorff distance can be used to measure duration of disconnect.}
    \label{fig:param-vs-TVG-coordinates}
\end{figure}

\section{Geometric and Topological Models for Time-Varying Graphs}\label{sec:TDA-and-metrics}

In this section we take up the question of when two time-varying graphs (TVGs) are ``close'' in a precise sense.
This is an important question for a variety of reasons, but one immediate application of such a notion would be a calculation of whether an $n$-node STARLINK sub-system is actually close to a strongly connected system, as the lifetime curves in \Cref{fig:lifetime-curves-sample} suggest.
Lifetime curves are merely a summary statistic and lack the discriminatory power of a true metric, but \Cref{fig:STARLINK-sample-distances} supports the intuition that (probabilistically) more than 40 nodes is sufficient for constructing a sub-TVG that is strongly connected, with $n=100$ providing a higher probability for a guarantee on constant connectivity.

More broadly, the TVG model fits nicely within a growing topological data analysis (TDA) paradigm, where topological changes can be quantified precisely.
This is a fundamental improvement over traditional parametric graph models such as those used in \cite{joswig2022parametric}, where the removal of an edge causes a discontinuity in the coordinatization process; see \Cref{fig:param-vs-TVG-coordinates}.






\subsection{Distances on Time-Varying Graphs}\label{sec:distances-on-TVGs}
In this section we begin the study of distances on TVGs (\Cref{defn:TVG}). 
We use distances rather than metrics because certain TVGs may be infinitely far away and also two TVGs may have distance zero, even if they are not exactly the same.
We briefly recall this more flexible notion of a metric. 
\begin{definition}[Distance]
 A \define{distance} on a set $X$ is a map $d: X \times X \to [0, \infty]$ where for all $x, y \in X$: 
\begin{enumerate}
    \item $d(x, y) = d(y, x)$
    \item $d(x, y) \geq 0$ and $d(x, x) = 0$
\end{enumerate}   
\end{definition}



Fundamental to our distances on TVGs is the Hausdorff distance.
\begin{definition}\label{def:Hausdorff-distance}
   Given $A, B \subseteq \R$, the \define{Hausdorff distance $d_H$} between $A$ and $B$ is defined as 
\[
    d_H(A, B) =  \inf\{\varepsilon \geq 0 |\; A \subseteq B^\varepsilon \text{ and } B \subseteq A^\varepsilon\}, 
\] where $A^\varepsilon = \bigcup\limits_{a\in A}{\text{Ball}_\varepsilon(a)}$ and $B^\varepsilon = \bigcup\limits_{b\in B}{\text{Ball}_\varepsilon(b)}$. 
Here ${\text{Ball}_\varepsilon(a)}=\{x \in \R\mid |x-a|\leq \varepsilon\}$.
\end{definition}

\subsubsection{Distances with Fixed Node Correspondence}\label{sec:distances-fixed-node}

\begin{definition}\label{def:Hausdorff-TVG-distance}
Given two matrix TVGs $M,N\in \Mat_n(\powR)$ their \define{Hausdorff distance} is
     \[
     d_H(M,N) := \max\limits_{1 \leq i, j \leq n}{\left\{ d_H \left(M_{ij}, N_{ij}\right)\right\}}    .\] 
\end{definition}

\begin{figure}[h]
    \centering
    \includegraphics[width=\textwidth]{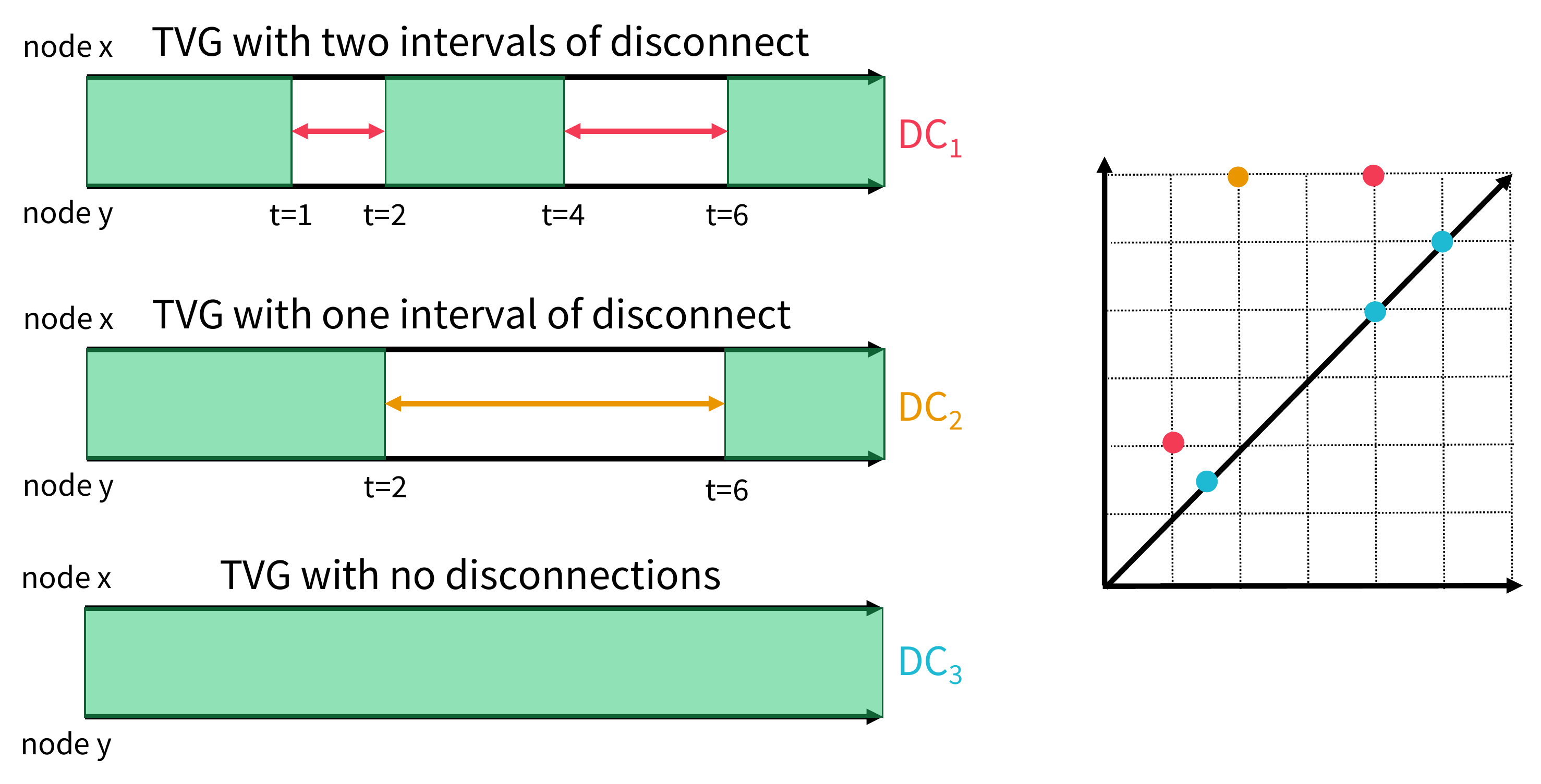}
    \caption{From an algorithmic perspective it is better to view the Hausdorff distance on windows of connection time as the Bottleneck distance on the disconnect times. This leverages an isometry proved in \cite[Thm.~E.1]{kim2019interleaving}. A TVG with no disconnections may be viewed as having an arbitrary number of points on the diagonal. The Bottleneck distance optimizes over wasy of aligning the disconnects between two TVGs.}
    \label{fig:disconnect-distance}
\end{figure}

\begin{remark}[Computability of $d_H(M, N)$]\label{rmk:Hausdorff-computability} Assuming $M$ ane $N$ are lifetime matrices (\Cref{defn:matrix-TVG}) then each entry is a finite collection of disjoint closed intervals. 
In this case, the computability of $d_H(M_{ij}, N_{ij})$ reduces to the the computability of the Hausdorff distance between the collections of these intervals. 
\cite[Thm.~E.1]{kim2019interleaving} proves that this quantity can be obtained via computing bottleneck distance between their respective complement intervals, i.e.,
\[d_H(M_{ij}, N_{ij}) = d_B (C(M_{ij}), C(N_{ij})),\]
where $C(M_{ij})$ and $C(N_{ij})$ denote the complements in $\R$---usually intersected with some compact interval indicating the simulation time---respectively.
\end{remark}

In view of \Cref{rmk:Hausdorff-computability}, we introduce a new suite of distances on TVGs.

\begin{definition}[$(p,q)$-Disconnect Distances]\label{def:p-q-disconnect-distance}
    Given lifetime matrices $M,N\in \Mat_n(\subR)$ their \define{$(p,q)$-Disconnect Distance} is
     \[
     d_{DC,p,q}(M,N) := \left\Vert \left\langle d^p_{W} \left (C(M_{ij}), C(N_{ij})\right )\right\rangle_{1 \leq i, j \leq n}\right\Vert_q    .\] 
     Where $C(M_{ij})$ and $C(N_{ij})$ is the complement of the intervals in $M_{ij}, N_{ij} \in\subR$, respectively, and $d^p_W$ is the Wasserstein distance on barcodes, reviewed in the Appendix. These distances populate the entries of a length $n^2$ vector, whose $\ell^q$ norm is then computed.
\end{definition}

\begin{remark}
    Following \Cref{rmk:Hausdorff-computability}, when \Cref{def:Hausdorff-TVG-distance} is restricted to lifetime matrices the Hausdorff distance can be viewed as a special case of \Cref{def:p-q-disconnect-distance} when $p=q=\infty$.
\end{remark}

In \Cref{fig:STARLINK-sample-distances} we compute the disconnect distances for $p=q=\infty$ (Hausdorff distance) and $p=q=2$ (the 2-Wasserstein distance).

\begin{figure}[h]
\begin{center}
\begin{tikzpicture}[baseline]
\begin{axis}[
    width=5.25cm,
    height=5cm,
    title={Bottleneck Distance \\
        {\footnotesize $k$-Walk to Complete}},
    xlabel={$k$},
    ylabel={seconds},
    xmin=1, 
    ymin=0,
]
\foreach \yindex/\nval in {2/30,3/40,4/50,5/70,6/100} {
    \edef\temp{\noexpand\addlegendentry{$n=\nval$}}
    \addplot+[ultra thick] table[y index = \yindex]
        {simulations/bottleneck_distances_100_sats_today.dat};
    \temp
}
\end{axis}
\end{tikzpicture}
\qquad \qquad
\begin{tikzpicture}[baseline]
\begin{axis}[
    width=5cm,
    height=5cm,
    title={$2$-Wasserstein Distance \\
        {\footnotesize $k$-Walk to Complete}},
    xlabel={$k$},
    ylabel={seconds},
    xmin=1, 
    ymin=0,
]
\foreach \yindex/\nval in {2/30,3/40,4/50,5/70,6/100} {
    \edef\temp{\noexpand\addlegendentry{$n=\nval$}}
    \addplot+[ultra thick] table[y index = \yindex]
        {simulations/wasserstein_distances_100_sats_today.dat};
    \temp
}
\end{axis}
\end{tikzpicture}
\end{center}
\caption{Two different metrics help verify the summary statistics depicted in \Cref{fig:lifetime-curves-sample}---for each sample of $n=30,40,50,70,100$ nodes from STARLINK, the distance from the $k$-cumulant to the constant matrix with value 86400 is computed for increasing $k$. The Bottleneck Distance (left) measures the largest deviation from constant connectivity, and separates out $n=100$ as the one truly connected subnetworks, whereas the 2-Wasserstein distance sums differences across entries in the TVGs and more tightly clusters the $n=30$ and $40$ node simulations apart from the $50$ and higher number node systems.}
    \label{fig:STARLINK-sample-distances}
\end{figure}
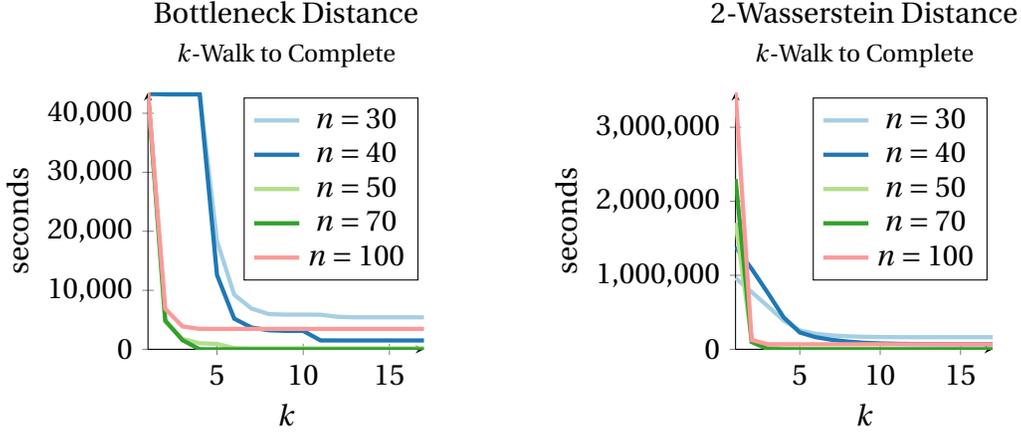


\subsubsection{Distances with Unknown Node Correspondence}\label{sec:distances-w-unknown-node}

Both Definitions \Cref{def:Hausdorff-TVG-distance} and \Cref{def:p-q-disconnect-distance} are only useful when the number of nodes in two TVGs are the same and there is a fixed node correspondence. This is because the vertices need to be ordered to determine a matrix representation and then distances are computed entry-by-entry for these.
In order to consider the distance between two general TVGs---where node correspondence is unclear---we need to introduce a relaxed Hausdorff distance, which has provable connections with an interleaving-type distance that is popular in TDA.

\begin{definition}\label{def:symmetric-hausdorff-distance}
The \define{symmetrized Hausdorff distance} between two matrix TVGs $M,N\in \Mat_n(\powR)$ is
    \[
        d_{\Sigma H}{(M, N)} = \inf\limits_{\sigma \in \Pi(n)}{d_H(M, \sigma(N))}
    ,\] where $\Pi(n)$ is the set of all permutations of $\{1,\ldots, n\}$, and $\sigma(N)$ is the matrix TVG whose $(i,j)$-th entry is $N_{\sigma(i),\sigma(j)}$. 
\end{definition}


\begin{corollary} 
For any pair of matrix TVGs $M,N\in \Mat_n(\powR)$, one has 
    \[
        d_{\Sigma H}{(M, N)} \leq d_H(M, N) 
    .\] 
\end{corollary}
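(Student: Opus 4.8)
The plan is to observe that $d_{\Sigma H}(M,N)$ is by definition an infimum over the symmetric group $\Pi(n)$, and that the identity permutation is always an admissible competitor in that infimum, recovering exactly $d_H(M,N)$. So the inequality is a one-line consequence of ``an infimum is bounded above by any particular value.''

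Concretely, first I would record that the identity permutation $\id \in \Pi(n)$ acts trivially on matrix TVGs: by the formula in \Cref{def:symmetric-hausdorff-distance}, the $(i,j)$-th entry of $\id(N)$ is $N_{\id(i),\id(j)} = N_{ij}$, so $\id(N) = N$ and hence $d_H(M,\id(N)) = d_H(M,N)$. Second, since $\Pi(n) \neq \varnothing$ and $\id \in \Pi(n)$, the infimum defining the symmetrized distance satisfies
\[
d_{\Sigma H}(M,N) \;=\; \inf_{\sigma \in \Pi(n)} d_H\bigl(M,\sigma(N)\bigr) \;\leq\; d_H\bigl(M,\id(N)\bigr) \;=\; d_H(M,N),
\]
which is exactly the asserted bound.

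There is essentially no obstacle here; the content is just the conceptual point that the symmetrized Hausdorff distance optimizes over a strictly larger family of node alignments than the fixed one of \Cref{def:Hausdorff-TVG-distance}, and the fixed alignment (the identity) is always available as one such choice. The only items worth a sentence of bookkeeping are that $\Pi(n)$ is nonempty so the infimum is taken over a nonempty set, and that the permutation action invoked is precisely the one from \Cref{def:symmetric-hausdorff-distance} specialized to $\sigma = \id$.
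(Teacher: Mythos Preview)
Your proposal is correct and is exactly the intended argument: the paper states this corollary without proof immediately after \Cref{def:symmetric-hausdorff-distance}, treating it as an obvious consequence of the fact that the identity permutation is one of the terms in the infimum. Your write-up just makes this explicit.
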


\begin{remark}
    Since $d_{\Sigma H}$ requires considering $d_H$ over all $n!$ possible permutations, $d_{\Sigma H}$ tends to be prohibitively expensive to compute, which is why we will work with topological summaries of TVGs such as zigzag barcodes instead.
\end{remark}

\subsubsection{Connections with the Interleaving Distance via Cosheaves}\label{sec:connect-w-interleaving-cosheaves}

Similar to the snapshot construction of \Cref{def:snapshot}, one can summarize a TVG over any interval $I\subseteq \R$.
Moreover, to better understand how a TVG behaves over a given interval $I$ and the relationship of this summary to another summary $J$, where $I\subseteq J$, we introduce a functorial summary of a TVG that has the added benefit of being a cosheaf.
The finer details of cosheaf theory are not needed for this paper and a basic understanding of category theory, as reviewed in \Cref{sec:basic-category-theory}, is sufficient. 
However, further reading on category theory is always encouraged and \cite{cats-in-context} in particular is an excellent resource.

\begin{definition}[Summary Graphs]\label{def:summary_graph}
Let $\cG=(G,\ell_M)$ be a time-varying graph in the sense of \Cref{defn:TVG}.
To each interval $I\subseteq \R$ we have a \define{summary subgraph over $I$}:
\[
\mathcal{E}_M(I):=\{e\in G \mid \ell_M(e)\cap I \neq \varnothing\}.
\]
This is the subgraph of $G$ where an edge is included iff it is alive at some point in the interval $I$.
The \define{underlying graph} of a time-varying graph $\cG$
is the summary graph over $I=\R$.
\end{definition}

We now promote the summary graph construction to a functor with two possible codomains.

\begin{definition}[The Category of Subgraphs and Graph Monomorphisms]\label{def:subgraph-categories}
    Fix $G=(V,E)$ a simple directed graph.
    \begin{itemize}
        \item The collection of \define{subgraphs of $G$}, written $\sub(G)$, has for objects pairs $G'=(V',E')$ where $V'\subseteq V$ and $E'\subseteq E$. There is a unique morphism from $G'\to G''$ iff $V'\subseteq V''$ and $E'\subseteq E''$, thus making $\sub(G)$ into a poset.
        \item The collection of \define{graph monomorphisms to $G$}, written $\mon(G)$, consists of simple directed graphs $G'=(V',E')$ along with an injective graph morphism $\varphi':G'\to G$. There is a morphism $\psi:(G',\varphi')\to (G'',\varphi'')$ if $\psi$ is a graph morphism satisfying $\varphi''\circ \psi = \varphi'$.
        \item There is a functor $\boldsymbol{\iota}:\sub(G)\to \mon(G)$ that takes each subgraph to its corresponding inclusion map.
    \end{itemize}
\end{definition}

\begin{remark}
    The difference between $\sub(G)$ and $\mon(G)$ is subtle, but crucial.
    To see the difference, suppose $G$ is a directed 3-cycle with vertex set $V=\{x,y,z\}$ and edge set \linebreak $E=\{[xy], [yz],[zx]\}$.
    In $\sub(G)$ there is only one object with 3 vertices and 3 edges, namely, $G$ itself.
    However, in $\mon(G)$ there are three objects with 3 vertices and 3 edges, where the injective graph morphisms range over all three cyclic permutations of the 3-cycle $G$.
\end{remark}


\begin{definition}[Summary Cosheaf]\label{def:summary-cosheaf}
    The \define{summary cosheaf} of a TVG $\cG=(G,\ell_M)$ is the functor
    \[
        \mathcal{E}_M: \Int \to \sub(G) \qquad \text{where} \qquad I \mapsto \mathcal{E}_M(I)\subseteq G,
    \]
    which assigns to every closed interval $I$ the summary graph over $I$.
\end{definition}

\begin{remark}\label{rmk:summary-cosheaf}
Following \cite{jmc,crg}, one can check that $\mathcal{E}_M$ is actually a cosheaf, although that observation is not necessary here.
Additionally, the functor $\mathcal{E}_M$ is one way of viewing that our TVGs are equivalent to the dynamic graphs construction of \cite{kim2017extracting}.
\end{remark}




\begin{definition}
If $\cG=(G,\ell_M)$ and $\cG'=(G,\ell_N)$ are two TVGs with the same underlying graph, 
then they are 
\define{$\varepsilon$-interleaved}
if there exist morphisms 
$\varphi : \mathcal{E}_M \to \mathcal{E}_N$ and 
$\psi : \mathcal{E}_N \to \mathcal{E}_M$ 
such that for all $I \in \Int$, the following 
diagram commutes:
$$
\begin{tikzcd}[row sep=normal, column sep=huge]
\mathcal{E}_M(I) 
\ar[r]  \ar[rd, "\varphi_I" near start] & 
\mathcal{E}_M(I^\varepsilon) 
\ar[r] \ar[rd, "\varphi_{I^\varepsilon}" near start] & 
\mathcal{E}_M(I^{2 \varepsilon}) \\
\mathcal{E}_N(I) 
\ar[r] \ar[ru, crossing over, "\psi_I"' near start] & 
\mathcal{E}_N(I^\varepsilon)
\ar[r] \ar[ru, crossing over, "\psi_{I^\varepsilon}"' near start] & 
\mathcal{E}_N(I^{2 \varepsilon}),
\end{tikzcd}
$$
where $I^{\varepsilon}$ denotes the $\varepsilon$-thickening of $I$, cf. \Cref{def:Hausdorff-distance}.
The \define{interleaving distance} between 
$\cG$ and $\cG'$ is
\[d_I(\cG, \cG') = 
\inf \big \{ 
\varepsilon \geq 0 : \mathcal{E}_M \text{ and } \mathcal{E}_N 
\text{ are } \varepsilon \text{-interleaved} 
\big \}.\]
\end{definition}

 \begin{theorem}[Isometry Theorems]\label{thm:isometries}
 Suppose $\cG=(G,\ell_M)$ and $\cH=(H,\ell_N)$ are two TVGs with isomorphic underlying graphs.
 Fix an isomorphism $\Phi: H \to G$, which in turn induces isomorphisms $\sub(G)\cong \sub(H)$ and $\mon(G)\cong \mon(H)$, so we can view $\mathcal{E}_N$ and $\boldsymbol{\iota}\circ \mathcal{E}_N$ as functors to $\sub(G)$ and $\mon(G)$, respectively.
 Any ordering of the nodes in $G$ then determines two isometries
     \[
     d_{H} (M, N) = d_{I} (\mathcal{E}_M, \mathcal{E}_N) \quad \text{and} \quad d_{\Sigma H} (M, N) = d_{I} (\boldsymbol{\iota}\circ\mathcal{E}_M, \boldsymbol{\iota}\circ\mathcal{E}_N),
     \] 
 where $M$ and $N$ are the lifetime matrices for $\cG=(G,\ell_M)$ and $\cH=(H,\ell_N)$, respectively.

%
%
\end{theorem}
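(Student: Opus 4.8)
The plan is to prove each equality by a sandwich of two inequalities, using the same per-edge mechanism in both the fixed-correspondence case ($d_H = d_I$) and the symmetrized case ($d_{\Sigma H} = d_I$). First I would record a \emph{level-set reformulation}: since $\sub(G)$ is a thin category (a poset), a component $\varphi_I\colon \mathcal{E}_M(I)\to \mathcal{E}_N(I^\varepsilon)$ of an interleaving natural transformation exists if and only if $\mathcal{E}_M(I)\subseteq \mathcal{E}_N(I^\varepsilon)$, and all naturality squares and both interleaving triangles then commute automatically. Hence $\mathcal{E}_M$ and $\mathcal{E}_N$ are $\varepsilon$-interleaved in $\sub(G)$ exactly when for every closed interval $I$ one has $\mathcal{E}_M(I)\subseteq \mathcal{E}_N(I^\varepsilon)$ and $\mathcal{E}_N(I)\subseteq \mathcal{E}_M(I^\varepsilon)$. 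By \Cref{def:summary-cosheaf} this unwinds to an edge-by-edge condition: for each edge $e$ and each $I$, $\ell_M(e)\cap I\neq\varnothing \Rightarrow \ell_N(e)\cap I^\varepsilon\neq\varnothing$, and symmetrically. (Matching of the vertex layers is routine: under the convention $\ell_M(v)=\R$ both summary graphs carry the full vertex set, and in general the containment axiom forces the vertex bookkeeping to follow the edges.)

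The technical core is then a \emph{per-edge lemma}: for finite unions of disjoint closed intervals $A,B\subseteq\R$, one has $d_H(A,B)\le\varepsilon$ if and only if for every closed interval $I$, $(A\cap I\neq\varnothing\Rightarrow B\cap I^\varepsilon\neq\varnothing)$ and $(B\cap I\neq\varnothing\Rightarrow A\cap I^\varepsilon\neq\varnothing)$. The forward direction is immediate from $A\subseteq B^\varepsilon$: a point of $A\cap I$ lies within $\varepsilon$ of a point of $B$, which then lies in $I^\varepsilon$. The reverse direction is a probing argument: applying the hypothesis to the degenerate interval $I=[a,a]$ for each $a\in A$ gives $B\cap[a-\varepsilon,a+\varepsilon]\neq\varnothing$, i.e. $a\in B^\varepsilon$, so $A\subseteq B^\varepsilon$, and symmetrically $B\subseteq A^\varepsilon$; closedness of $A$ and $B$ makes the $\varepsilon$-neighborhoods (\Cref{def:Hausdorff-distance}) closed and the distance attained. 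Combining with the reformulation and taking the infimum over $\varepsilon$ — the admissible $\varepsilon$ form a closed half-line on both sides — yields $d_I(\mathcal{E}_M,\mathcal{E}_N)=\max_{i,j} d_H(M_{ij},N_{ij})=d_H(M,N)$, the first isometry (diagonal entries are $\R$ on both sides and contribute $0$).

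For the symmetrized statement I would rerun the analysis in $\mon(G)$. The new feature is that morphisms in $\mon(G)$ are injective graph maps compatible with the structure maps into $G$, so the components $\varphi_I$ of an interleaving of $\boldsymbol{\iota}\circ\mathcal{E}_M$ and $\boldsymbol{\iota}\circ\mathcal{E}_N$ are no longer forced to be the identity relabeling but carry the data of a vertex permutation. Evaluating the interleaving on a sufficiently large interval (one containing every bounded endpoint of every entry, or $I=\R$) produces an injective self-map of the finite graph $G$, hence an automorphism $\sigma$, and naturality along refinements $I\subseteq J$ forces the relabelings realized on smaller intervals to be restrictions of this single $\sigma$. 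Feeding $\sigma$ through the per-interval inclusions and the per-edge lemma shows that an $\varepsilon$-interleaving in $\mon(G)$ is equivalent to the existence of $\sigma\in\Pi(n)$ with $d_H(M,\sigma(N))\le\varepsilon$; the converse (building the interleaving from a permutation that realizes $d_{\Sigma H}$) is the easy direction, obtained by composing the subgraph inclusions of the first part with $\sigma$ and the $\mon(G)$-isomorphisms it induces. Taking infima gives $d_I(\boldsymbol{\iota}\circ\mathcal{E}_M,\boldsymbol{\iota}\circ\mathcal{E}_N)=\inf_{\sigma\in\Pi(n)} d_H(M,\sigma(N))=d_{\Sigma H}(M,N)$ (cf.\ \Cref{def:symmetric-hausdorff-distance}).

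The main obstacle I expect is the $\mon(G)$ half: one must show the relabelings hidden in the $\varphi_I$ organize into a single global permutation rather than an interval-dependent family, and that this permutation is simultaneously compatible with all the edgewise Hausdorff bounds. This is exactly where finiteness of $G$, naturality of the interleaving, and the precise definition of $\mon(G)$ and of the functor induced along the fixed isomorphism $\Phi\colon H\to G$ are used in an essential way, and the degenerate-interval probing of the per-edge lemma must be upgraded so that it detects not only the lifetimes but also which relabeling of edges the interleaving performs. By contrast the $\sub(G)$ statement is just the per-edge lemma plus thinness and should be routine. The overall argument is parallel to, and proved by the same device as, the merge-tree isometry of \cite{gasparovic2019intrinsic} and the Reeb-cosheaf isometry of \cite{crg}.
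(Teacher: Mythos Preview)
Your proposal is correct and follows essentially the same approach as the paper: both arguments reduce the interleaving condition to per-edge Hausdorff bounds via probing with singleton intervals $I=[t,t]$, and both extract a single global permutation from the $\mon(G)$-interleaving (you by evaluating on a large interval and invoking naturality, the paper by ``taking colimits,'' which amounts to the same thing). The only cosmetic difference is organizational---the paper proves the $\mon(G)$ isometry first and remarks that the $\sub(G)$ case follows, whereas you build up from $\sub(G)$ to $\mon(G)$---and your treatment of the permutation-consistency step is in fact more explicit than the paper's.
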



\subsection{Topological Summaries of TVGs via Barcodes for Machine Learning}\label{sec:top-summaries-TVGs}


In this section we show how techniques from topological data analysis (TDA) can be used to simplify the study of TVGs, by converting them into two zigzag persistence barcodes: one for degree-0, which summarizes how components in the snapshots of $\cG$ evolve over time, and one for degree-1, which summarizes how cycles in the snapshots evolve.
Two important conclusions of this section are
\begin{enumerate}
    \item The degree-0 and degree-1 zigzag barcodes are \emph{stable features} of TVGs, and
    \item These features can be used to distinguish Earth-Mars vs. Earth-Moon space networking scenarios in a K-Nearest Neighbors (KNN) classifier, \emph{using the network topology alone.}
\end{enumerate}

These zigzag barcodes are based on the homology of a graph, along with maps that are induced on homology from graph morphisms.

\begin{definition}[Homology]
   Fix a field $\Bbbk$ for the remainder of the paper.
    To every graph $G$, one has two vector spaces associated: $H_0(G)$---the vector space generated by the connected components of $G$---and $H_1(G)$---the vector space generated by cycles in $G$. 
    Additionally, associated to any graph morphism $G'\to G$ there are induced linear maps $H_k(G')\to H_k(G)$ on these vector spaces.
    In otherwords, we have the \define{homology functors} for $k=0$ and $k=1$:
    \[
    H_k:\mon(G) \to \vect.
    \]
\end{definition}

\begin{definition}[Homology Modules for TVGs]
Recall the subgraph categories of \Cref{def:subgraph-categories} and the summary cosheaf construction of \Cref{def:summary-cosheaf}.
For any TVG $\cG$, we have, 
by post-composing with $\mathcal{E}_M (\cG)$, the associated \define{homology modules} $H_k\mathcal{E}_M (\cG): \Int \to \vect$, which captures the homology of each summary graph of TVG $\cG$ in degrees $k=0$ and $1$.
\end{definition}

\begin{definition}[TVG Barcodes]
    For $k=0,1$, the homology modules $H_k\mathcal{E}_M (\cG): \Int \to \vect$ have a canonically associated \define{zigzag barcode}, which is a multiset of intervals in $\R$:
    \[
        B_k(\cG) = \{(I_j,m_j)\} \quad \text{where} \quad m_j:I_j \to \mathbb{N}.
    \]
    
\end{definition}

\begin{remark}[Zigzag Persistence Review]\label{rmk:zigzag-explanation}
    Zigzag persistence is well studied in the TDA community with computational foundations established in \cite{Carlsson:zigzag:2009, GraphPersistence:2023,GraphZigzag:2021,fastzigzag, dionysus2} among others.
    We review two theoretical perspectives on this construction.
    
    The first proceeds directly by recognizing that associated to each TVG $\cG=(G,\ell_M)$ is a finite set of closed intervals for each edge $v,e\in G$. 
    Taking the union of the endpoints of these intervals across all $v,e$ specifies a finite set of ``critical values'' $\{\tau_i\}_{i=0}^n$
    where an edge or vertex can appear, from this we can express any TVG as a zigzag diagram of sub-graphs of the complete graph
    \[
    G_{-1} \hookrightarrow G_0 \hookleftarrow G_1 \hookrightarrow G_2 \hookleftarrow \cdots \hookrightarrow G_{2n} \hookleftarrow G_{2n+1},
    \]
    where even-indexed subgraphs $G_{2i}$ indicate the snapshot of $\cG$ at each critical value $\tau_i$ and odd-indexed subgraphs $G_{2i\pm 1}$ correspond to snapshots of $\cG$ at times $\tau_i\pm \epsilon$ for sufficiently small $\epsilon$.
    Taking homology of each of these subgraphs and graph inclusions produces a representation of a type $A_{2n+3}$ alternating quiver, which by Gabriel's theorem \cite{gabriel1972} has a canonical multiset of intervals associated to it.

    Alternatively, one can follow the work of \cite{kim2017extracting}, which shows that our TVGs are equivalently viewed as dynamic graphs.
    Following \cite[Def. 2.17]{kim2017extracting} one sees that restricting $H_k\mathcal{E}_M (\cG)$ to one-point intervals $[t,t]$ produces a map $h_k:\R\to\vect$ that is ``cosheaf-inducing.''
    As shown in \cite{zz-stability-bl,hb-stability} constructible cosheaves are block-decomposable and restricting these blocks to the diagonal $y=x$ produces the zigzag barcode described above.
\end{remark}

As previously mentioned, the zigzag barcode is a stable feature of a TVG.
The next proposition explains what is meant by ``stable.''

\begin{proposition}\label{prop:TVG-barcode-stability}
    Suppose $\cG=(G,\ell_M)$ and $\cH=(H,\ell_N)$ are two TVGs with a fixed isomorphism between the underlying graphs $\Phi: G\cong H$, as in \Cref{thm:isometries}.
    The bottleneck distance on the barcodes of $H_k\mathcal{E}_M (\cG)$ and $H_k\mathcal{E}_N (\cH)$ are bounded above as follows:
    \[d_B (\cG, \cH) \leq d_I\left(\boldsymbol{\iota}\circ \mathcal{E}_M (\cG), \boldsymbol{\iota}\circ \mathcal{E}_N (\cH)\right) \leq d_I\left(\mathcal{E}_M (\cG),  \mathcal{E}_N (\cH)\right). \]
    \begin{proof}
    The notation $H_k\mathcal{E}_M$ actually refers to the composition of functors $H_k\circ \boldsymbol{\iota}\circ \mathcal{E}_M$.
    \cite[Proposition 3.6]{categorified-persistence} proves very generally that compositions of functors define Lipschitz-1 maps between categories that are equipped with interleaving distances.
    Consequently
            \begin{align*}
        & \di^{\vect} \big( H_k \circ \boldsymbol{\iota}\circ \mathcal{E}_M (\cG), H_k \circ \boldsymbol{\iota}\circ \mathcal{E}_N(\cH) \big) \\ 
        & \phantom{=========} \leq{} \di^{\mon(G)} \big( \boldsymbol{\iota}\circ\mathcal{E}_M (\cG),  \boldsymbol{\iota}\circ \mathcal{E}_N(\cH) \big) \\ 
        & \phantom{===============} \leq{} \di^{\sub(G)} \big( \mathcal{E}_M (\cG),  \mathcal{E}_M(\cG') \big).
            \end{align*} 
    By the results from \cite{hb-stability, zz-stability-bl}, one has  
        \[d_B (\cG, \cG') = d_I\left(H_k\mathcal{E}_M (\cG), H_k\mathcal{E}_M (\cG')\right),\]
    which proves the stated result.
    \end{proof}
\end{proposition}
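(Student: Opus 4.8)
The plan is to obtain the double inequality by combining two imported facts: that post-composition with a fixed functor is $1$-Lipschitz with respect to interleaving distances, and that the interleaving distance on the degree-$k$ homology module of a TVG coincides with the bottleneck distance on its zigzag barcode. First I would unwind the notation: by construction $H_k\mathcal{E}_M(\cG)$ is the composite functor $H_k \circ \boldsymbol{\iota} \circ \mathcal{E}_M$, running $\mathcal{E}_M : \Int \to \sub(G)$, then the inclusion $\boldsymbol{\iota} : \sub(G)\to\mon(G)$ of \Cref{def:subgraph-categories}, then the homology functor $H_k : \mon(G)\to\vect$. Objects of each of the functor categories $\Fun(\Int,\sub(G))$, $\Fun(\Int,\mon(G))$, and $\Fun(\Int,\vect)$ admit an interleaving distance built from the same $\varepsilon$-thickening $I\mapsto I^\varepsilon$ of intervals (\Cref{def:Hausdorff-distance}) used to define $d_I$ on TVGs, and the fixed isomorphism $\Phi:G\cong H$ from \Cref{thm:isometries} lets me regard $\mathcal{E}_M(\cG)$ and $\mathcal{E}_N(\cH)$ as objects of a single such functor category, so their interleaving distances are directly comparable.

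Next I would invoke \cite[Proposition 3.6]{categorified-persistence}: composing with any fixed functor cannot increase interleaving distance. Applying this to $\boldsymbol{\iota}$ and then to $H_k$ produces the chain
\[
d_I^{\vect}\big(H_k\boldsymbol{\iota}\mathcal{E}_M(\cG),\, H_k\boldsymbol{\iota}\mathcal{E}_N(\cH)\big) \;\leq\; d_I^{\mon(G)}\big(\boldsymbol{\iota}\mathcal{E}_M(\cG),\, \boldsymbol{\iota}\mathcal{E}_N(\cH)\big) \;\leq\; d_I^{\sub(G)}\big(\mathcal{E}_M(\cG),\, \mathcal{E}_N(\cH)\big),
\]
which is precisely the interior term and the right-hand inequality of the proposition once $d_I^{\sub(G)}$ is identified with $d_I(\mathcal{E}_M,\mathcal{E}_N)$ and $d_I^{\mon(G)}$ with $d_I(\boldsymbol{\iota}\circ\mathcal{E}_M,\boldsymbol{\iota}\circ\mathcal{E}_N)$. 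For the left-hand inequality I would strengthen ``$\leq$'' to an equality by appealing to the block-decomposability results of \cite{hb-stability, zz-stability-bl}: restricting $H_k\mathcal{E}_M(\cG)$ to one-point intervals is cosheaf-inducing, hence block-decomposable, and restricting the blocks to the diagonal recovers the zigzag barcode $B_k(\cG)$, with the interleaving distance on blocks equal to the bottleneck distance on barcodes (this is the content reviewed in \Cref{rmk:zigzag-explanation}). Concatenating this isometry $d_B(\cG,\cH)=d_I^{\vect}\big(H_k\mathcal{E}_M(\cG),H_k\mathcal{E}_N(\cH)\big)$ with the two Lipschitz steps then yields the stated bound.

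The main obstacle is entirely a matter of bookkeeping: one must verify that the three interleaving distances above are normalized consistently, i.e., that the $\varepsilon$-thickening used to define $d_I$ on $\sub(G)$ and $\mon(G)$ --- and hence on TVGs --- is exactly the shift functor for which \cite{hb-stability, zz-stability-bl} establish the isometry between interleaving and bottleneck distances on $\Fun(\Int,\vect)$. If one preferred to avoid the black box of \cite{categorified-persistence}, the Lipschitz steps can be proved by hand: an $\varepsilon$-interleaving $(\varphi,\psi)$ of $\mathcal{E}_M(\cG)$ and $\mathcal{E}_N(\cH)$ is carried by $H_k\circ\boldsymbol{\iota}$ to a pair of natural transformations whose composites recover the structure maps shifted by $2\varepsilon$ --- hence an $\varepsilon$-interleaving of persistence modules --- after which the algebraic stability theorem for block-decomposable modules converts it into an $\varepsilon$-matching of $B_k(\cG)$ and $B_k(\cH)$. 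This direct route re-derives the cited results but is otherwise routine.
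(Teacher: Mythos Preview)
Your proposal is correct and follows essentially the same route as the paper: unwind $H_k\mathcal{E}_M$ as $H_k\circ\boldsymbol{\iota}\circ\mathcal{E}_M$, invoke \cite[Proposition~3.6]{categorified-persistence} twice to get the Lipschitz-1 chain of interleaving inequalities, and then identify $d_I^{\vect}$ with $d_B$ via the block-decomposition isometry of \cite{hb-stability, zz-stability-bl}. Your write-up is in fact more careful than the paper's about the normalization bookkeeping and the role of $\Phi$ in placing both functors in a common target category.
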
 

We now demonstrate the discriminative powers of the zigzag barcode on Earth-Moon and Earth-Mars simulations using a K-Nearest Neighbors (KNN) classifier.

\begin{figure}[h]
\begin{center}
\begin{tikzpicture}[baseline]
\begin{axis}[
    width=6cm,
    height=7cm,
    title={Single $k$-NN ($m = 5$) \\
        {\footnotesize 80/20-Split}},
    xlabel={$k$},
    ylabel={Accuracy},
    xmin=1, 
    ymin=0,
    ymax=1.2,
    legend pos=south east
]
\foreach \yindex/\nval in {1/5,2/10,3/15,5/50} {
    \edef\temp{\noexpand\addlegendentry{$s=\nval$}}
    \addplot+[ultra thick] table[y index = \yindex]
        {simulations/knn_single_overlay_today_50.dat};
    \temp
}
\end{axis}
\end{tikzpicture}
\qquad \qquad
\begin{tikzpicture}[baseline]
\begin{axis}[
    width=6cm,
    height=7cm,
    title={Average $k$-NN ($m = 5$) \\
        {\footnotesize Over $(100)$ 80/20-Splits}},
    xlabel={$k$},
    xmin=1, 
    ymin=0,
    ymax=1.2,
    ytick=\empty,
    legend pos=south east
]
\foreach \yindex/\nval in {1/5,2/10,3/15,5/50} {
    \edef\temp{\noexpand\addlegendentry{$s=\nval$}}
    \addplot+[ultra thick] table[y index = \yindex]
        {simulations/knn_single_overlay_average_50.dat};
    \temp
}
\end{axis}
\end{tikzpicture}
\end{center}
\caption{Average performance curves for a KNN classifier on $H_1$ zigzag persistence barcodes computed from time-varying graphs (TVGs) representing Earth-Moon or Earth-Mars satellite systems. Each simulation has an even class balance of Earth-Moon vs.~Earth-Mars systems, with $m = 5$ samples from each class, and with simulation time the same fixed day in March 2023. For all simulations the same Moon/Mars satellites are used, but the STARLINK satellites are randomly sampled with samples ranging from $s=5,10,15,50$. Average performance is computed over 100 (80\%-20\%) train-test splits.}
\label{fig:KNN-single-day}
\end{figure}
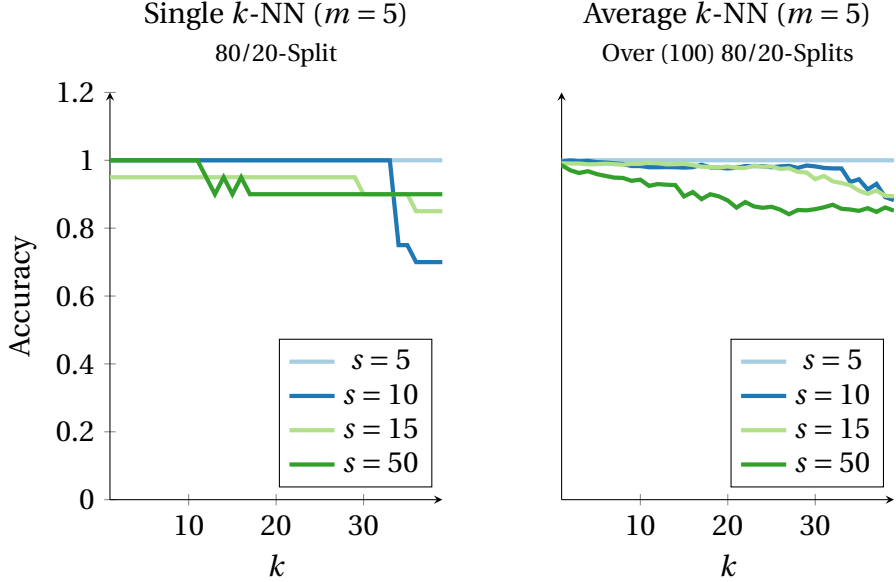


\subsubsection{KNN on Earth-Moon vs. Earth-Mars Satellite Systems}\label{sec:KNN}

For a final set of experiments, we evaluate the efficacy of the TVG model as the first step in a supervised machine learning pipeline, which ends with the feature of the degree-1 zigzag barcode.
For these experiments, we are interested in a binary classification problem with samples from two classes, described below.
\begin{itemize}
    \item \textbf{Class Earth-Moon}: Consists of random samples of $s=15$ STARLINK satellites around the Earth and 5 satellites around the Moon, simulated over one day (86400 seconds) and converted into a $20\times 20$ TVG matrix.
    \item \textbf{Class Earth-Mars}: Consists of random samples of $s=15$ STARLINK satellites around the Earth and 5 satellites around Mars, simulated over one day and converted into a $20\times 20$ TVG matrix.
\end{itemize}

In \Cref{fig:KNN-single-day} we use the same day in March 2023 to generate all the simulations, but we also vary the number $s$ of STARLINK satellites in the system from 5 to 50.
We do this for two reasons: (1) because the number of STARLINK satellites is the primary driver of ``noise'' in our TVGs, as they are being chosen from a database with over 2500 STARLINK satellites, and (2) because we want to understand the computational complexity of these simulations.
In particular, varying $s$ leads to TVG matrices of size $10\times 10$ (for $s=5$), $15\times 15$ (for $s=10$), $20\times 20$ (for $s=15$), and $55\times 55$ (for $s=50$).
Using a current state-of-the-art laptop (a 2023 MacBook Pro, 12‑core CPU and 38‑core GPU M2 Max Chip) generating the \texttt{.orb} files took only a few seconds for each experiment (100 random simulations for each value of $s$) and generating the list of contact times using SOAP only took two minutes.
The main computational bottleneck was in computing zigzag persistence for each simulation and constructing the $100\times 100 $ matrix of $W_2$-distances between these barcodes. For $s=15$  the distance matrix only took ten minutes to calculate, but for $s=50$ nodes the analogous computation took 193 minutes.
Nevertheless, the classification results are all very good with $k=3$ providing $\sim 98\% $ classification accuracy for $s=5$ and $s=10$, and $\sim 93\% $ accuracy for $s=15$ and $s=50$.

Fixing $s=15$, we can also vary the days that the simulation takes place on.
This has a much stronger effect on the classification performance since the relative positions of Earth and Mars, as measured by its synodic period, has a $\sim 2$ year (780 day) period.
For example, in the top row of \Cref{fig:KNN} a random day is chosen from each month, beginning in January 2018 and continuing for 24 months, and a pair of Earth-Moon and Earth-Mars simulations are generated. This yields 48 samples, evenly distributed across the two classes. For a particular $80\%-20\%$ split, the classification performance of KNN is shown in the top-left of \Cref{fig:KNN} with a peak classification rate occuring at $k=5$ neighbors.
The top-right of \Cref{fig:KNN} is calculated by average these performance curves across 200 train-test splits on this same $n=48$ sample experiment.
In the bottom row of \Cref{fig:KNN} a similar experiment is repeated, except for each randomly sampled day 2 simulations are generated from each class, thus leading to a $n=96$ experiment. Notice that the classification power increases as different days are being balanced by random draws from the same day.
Finally, in \Cref{fig:knn-over-the-years} this same basic experiment is repeated, but with different start years: 2021, 2022, 2023, and 2024.
Average KNN classification performance is generally above 
80\% for $k < 5$ for all of these years.

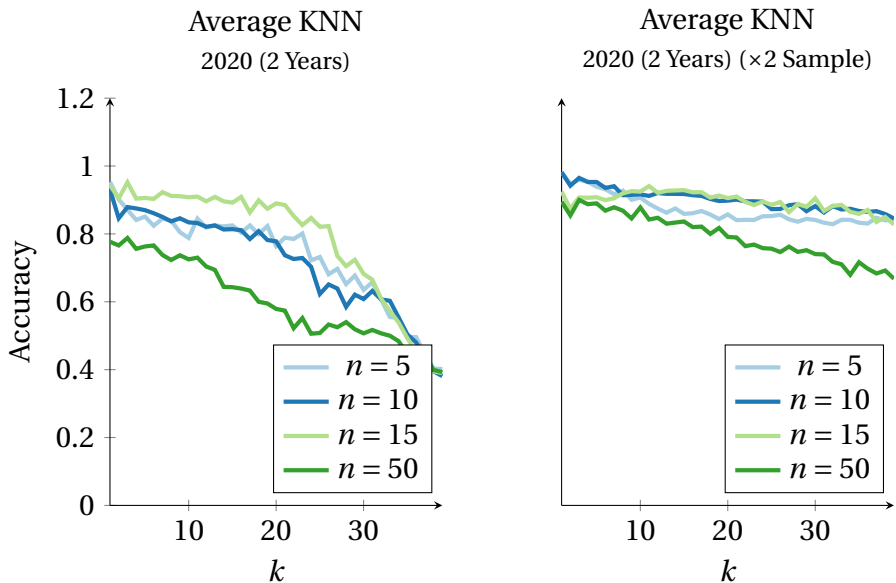
\begin{figure}[h]
\begin{center}
\begin{tikzpicture}[baseline]
\begin{axis}[
    width=6cm,
    height=7cm,
    title={Average KNN \\
        {\footnotesize 2020 (2 Years)}},
    xlabel={$k$},
    ylabel={Accuracy},
    xmin=1, 
    ymin=0,
    ymax=1.2,
    legend pos=south east
]
\foreach \yindex/\nval in {1/5,2/10,3/15,5/50} {
    \edef\temp{\noexpand\addlegendentry{$n=\nval$}}
    \addplot+[ultra thick] table[y index = \yindex]
        {simulations/knn_single_overlay_average_2020.dat};
    \temp
}
\end{axis}
\end{tikzpicture}
\qquad \qquad
\begin{tikzpicture}[baseline]
\begin{axis}[
    width=6cm,
    height=7cm,
    title={Average KNN \\
        {\footnotesize 2020 (2 Years) ($\times 2$ Sample)}},
    xlabel={$k$},
    xmin=1, 
    ymin=0,
    ymax=1.2,
    ytick=\empty,
    legend pos=south east
]
\foreach \yindex/\nval in {1/5,2/10,3/15,5/50} {
    \edef\temp{\noexpand\addlegendentry{$n=\nval$}}
    \addplot+[ultra thick] table[y index = \yindex]
        {simulations/knn_single_overlay_average_2020_2.dat};
    \temp
}
\end{axis}
\end{tikzpicture}
\end{center}
\caption{KNN performance on $H_1$ zigzag persistence to differentiate satellite systems over a two year period. On the left, 24 days are selected, one from each month in a 24 month period beginning in 2020, and two simulations are generated---one from the Earth-Moon Class and one from the Earth-Mars Class for a total of 48 simulations. On the right, the same experiment is performed, but with 2 samples drawn from each class, leading to 96 simulations. Having more simulations from the same day improves classification accuracy considerably, 
}
    \label{fig:KNN}
\end{figure}


\section{Future Directions}\label{sec:future-dirs}

This paper is one installment in an on-going effort to integrate ideas from mathematics into the design of better engineered systems capable of supporting fast, reliable, and autonomous routing in a solar system-wide internet.
We conclude with a list of ongoing and future work that we hope will draw greater attention to the rich source of research problems that space networking presents.

\begin{enumerate}
    \item \textbf{Faster Algorithms:} Current protocols for routing over a temporal and disruption tolerant network involves the construction of a contact graph and solving Dijkstra's algorithm over this graph; this is called Contact Graph Routing (CGR). Recent efforts \cite{moy2023} show that this can be sped up substantially by using a contact multigraph, as it reduces the number of vertices that Dijkstra's algorithm needs to search over. This mult-graph can be viewed as a display space associated to the cosheaf that defines a TVG. Do the perspectives introduced in this paper provide further computational benefits? For example, is there a routing algorithm that operates directly on the matrix TVG, viewed as a data structure?
    The current goal for space simulation work is the ability to simulate routing over a network with $10^5$ nodes, with our current capabilities hovering around $10^3$.
    \item \textbf{Exploiting Periodicity:} From an algebraic perspective, the difficulty of routing in a system with propagation delay (such as in Earth-to-Mars communication) is the lack of convergence of the Kleene star. However, this comes from using $\End(\powR)$ as a semi-ring. In truly periodic systems, such as those governed by celestial mechanics, it may make more sense to view lifetimes as subsets of the circle $\mathbb{S}^1$. If we consider the communication matrix $A$ with entries in $\End(\mathcal{P}(\mathbb{S}^1))$, there may be conditions when non-trivial propagation delay may still yield a convergent Kleene star, such as in when a shift is a rational multiple of the circumference.
    \item \textbf{Sub-Netting in TVGs:} Our KNN classification experiments indicate that TVGs, even when reduced to their zigzag barcodes, can represent distinct types of space networking scenarios. This leads us to believe that the metrics introduced in this paper can be used to identify motifs or other sub-structures in TVGs that can be used to automatically identify domains for routing. This is important as terrestrial internet relies heavily on fixed routing tables, which is not possible in a time-varying setting. Is it possible to identify algorithms or methods for automatically sub-netting a TVG? If so, this would help make routing protocols adapted to the type of (sub) TVG at hand, which could then be composed to efficiently route across disparate parts of a future space internet.
    \item \textbf{More Realistic Models:} The approach to satellite network modelling taken in this paper is based on considering all possible contacts based on line-of-sight. In practice, many (or most) nodes might only be able to hold one link at a time; choosing one contact means not choosing others. Moreover setting up a link, tearing down a link, and pointing antennas all take time. This means that a realistic network is really based on a subset of all possible contacts. Methods to choose such a subset, particularly in a resilient and robust way, remain unknown. The machinery introduced in this paper could be used to create ``best practices'' and frameworks for network management.
\end{enumerate}


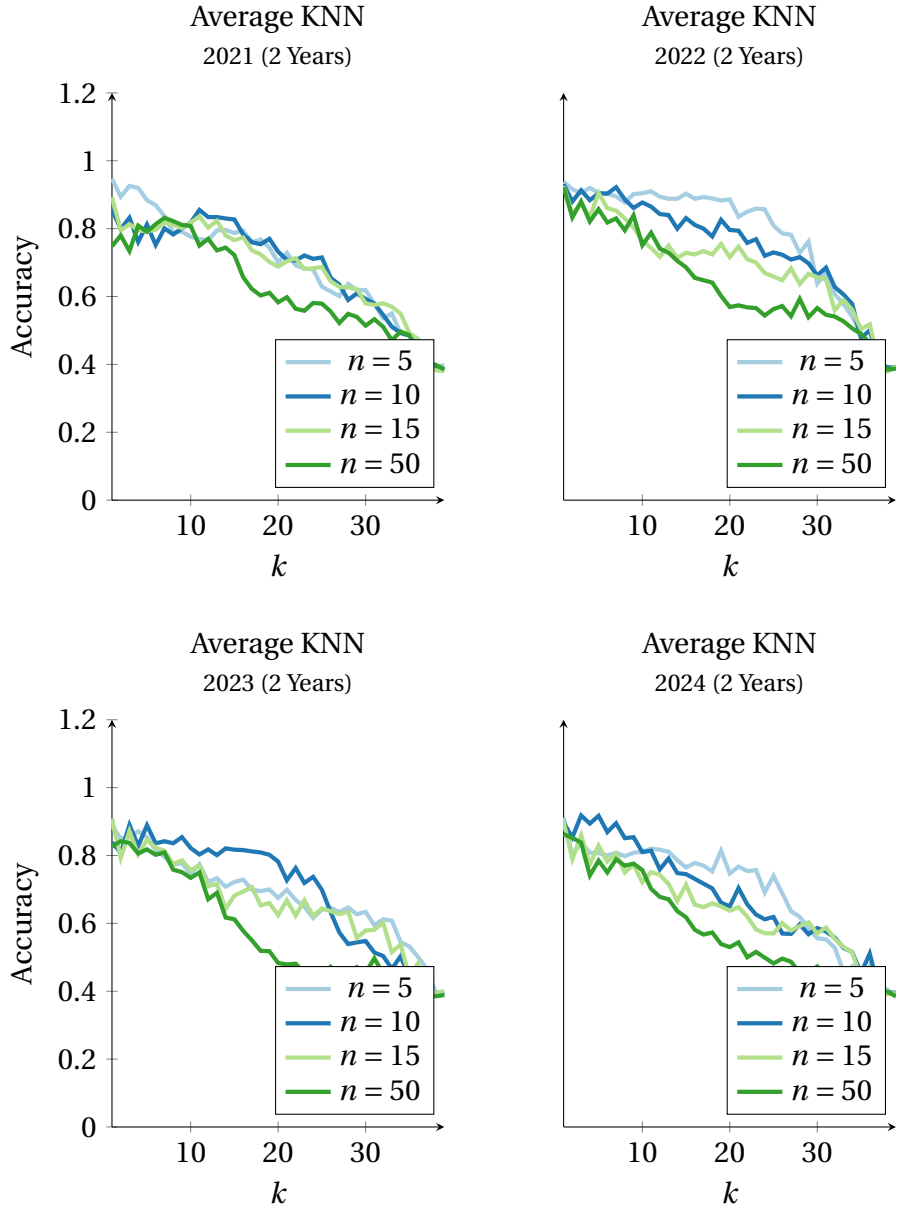
\begin{figure}[h!]
\begin{center}
\begin{tikzpicture}[baseline]
\begin{axis}[
    width=6cm,
    height=7cm,
    title={Average KNN \\
        {\footnotesize 2021 (2 Years)}},
    xlabel={$k$},
    ylabel={Accuracy},
    xmin=1, 
    ymin=0,
    ymax=1.2,
    legend pos=south east
]
\foreach \yindex/\nval in {1/5,2/10,3/15,5/50} {
    \edef\temp{\noexpand\addlegendentry{$n=\nval$}}
    \addplot+[ultra thick] table[y index = \yindex]
        {simulations/knn_single_overlay_average_2021.dat};
    \temp
}
\end{axis}
\end{tikzpicture}
\qquad \qquad
\begin{tikzpicture}[baseline]
\begin{axis}[
    width=6cm,
    height=7cm,
    title={Average KNN \\
        {\footnotesize 2022 (2 Years) }},
    xlabel={$k$},
    xmin=1, 
    ymin=0,
    ymax=1.2,
    ytick=\empty,
    legend pos=south east
]
\foreach \yindex/\nval in {1/5,2/10,3/15,5/50} {
    \edef\temp{\noexpand\addlegendentry{$n=\nval$}}
    \addplot+[ultra thick] table[y index = \yindex]
        {simulations/knn_single_overlay_average_2022.dat};
    \temp
}
\end{axis}
\end{tikzpicture}
\end{center}

\begin{center}
\begin{tikzpicture}[baseline]
\begin{axis}[
    width=6cm,
    height=7cm,
    title={Average KNN \\
        {\footnotesize 2023 (2 Years)}},
    xlabel={$k$},
    ylabel={Accuracy},
    xmin=1, 
    ymin=0,
    ymax=1.2,
    legend pos=south east
]
\foreach \yindex/\nval in {1/5,2/10,3/15,5/50} {
    \edef\temp{\noexpand\addlegendentry{$n=\nval$}}
    \addplot+[ultra thick] table[y index = \yindex]
        {simulations/knn_single_overlay_average_2023.dat};
    \temp
}
\end{axis}
\end{tikzpicture}
\qquad \qquad
\begin{tikzpicture}[baseline]
\begin{axis}[
    width=6cm,
    height=7cm,
    title={Average KNN \\
        {\footnotesize 2024 (2 Years) }},
    xlabel={$k$},
    xmin=1, 
    ymin=0,
    ymax=1.2,
    ytick=\empty,
    legend pos=south east
]
\foreach \yindex/\nval in {1/5,2/10,3/15,5/50} {
    \edef\temp{\noexpand\addlegendentry{$n=\nval$}}
    \addplot+[ultra thick] table[y index = \yindex]
        {simulations/knn_single_overlay_average_2024.dat};
    \temp
}
\end{axis}
\end{tikzpicture}
\end{center}
\caption{KNN average performance using 96 total simulations as described in \Cref{fig:KNN} where the two-year period starts in the indicated year.}
    \label{fig:knn-over-the-years}
\end{figure}


\newpage

\Urlmuskip=0mu plus 1mu\relax
\printbibliography

\appendix
\section{Definitions and Technical Results on Semi-Rings}\label{sec:semi-ring-appendix}

\begin{definition}\label{def:monoid}
    A \define{monoid} is a set $(S,\circ,\e)$ equiped with a binary operation $\circ:S\times S \to S$, that is associative, i.e., $a\circ(b\circ c) = (a\circ b)\circ c$, and where $\e$ acts as a neutral element on both the left and the right, i.e., $a\circ \e = a = \e \circ a$.
    Such a monoid is \define{commutative} if $a\circ b =b\circ a$ for all pairs of elements $a,b\in S$.
\end{definition}

\begin{definition}\label{def:semi-ring-homomorphism}
An \define{additive homomorphism} from a semi-ring $(S,\oplus,\otimes,\n,\e)$ to $(T,+,\times,0,1)$ is a map $h:S\to T$ such that
\begin{itemize}
\item $h(\n)=0$, and
\item $h(a\oplus b)=h(a) + h(b)$.
\end{itemize}
A \define{homomorphism} of semi-rings is an additive homomorphism that also satisfies the properties
\begin{itemize}
\item $h(\e)=1$, and
\item $h(a\otimes b)=h(a)\times h(b)$.
\end{itemize}
An additive endomorphism is an additive homomorphism of $S$ to itself.
Similarly, an endomorphism is a homomorphism from $S$ to itself.
\end{definition}

On \cite[pg. 14]{Baras2010} we have the following elementary observation:

\begin{lemma}\label{lem:end-semi-ring}
If $S$ is a semi-ring, then $H:=\End(S)$, the set of additive endomorphisms of $S$, is a semi-ring as well. The two operations are
\begin{itemize}
\item $(h_1+h_2)(a):=h_1(a)+h_2(a)$
\item $(h_1\odot h_2)(a):=h_2(h_1(a))$
\end{itemize}
Notice that, in particular, we did not need to assume that endomorphisms preserve products to make $\End(S)$ into a semi-ring. Also notice that even if $S$ is a commutative semi-ring, i.e., $\otimes$ is commutative, then $\End(S)$ need not be commutative.
\end{lemma}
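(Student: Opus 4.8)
The plan is to verify the semi-ring axioms of \Cref{defn:semi-ring} directly for $H=\End(S)$, with the zero endomorphism $0_H\colon a\mapsto\n$ as additive neutral element and $\id_S$ as multiplicative neutral element. The first task is to confirm that $+$ and $\odot$ really do land back inside $\End(S)$. For additive endomorphisms $h_1,h_2$ one has $(h_1+h_2)(\n)=h_1(\n)+h_2(\n)=\n+\n=\n$, and, using commutativity and associativity of $\oplus$ in $S$, $(h_1+h_2)(a\oplus b)=\bigl(h_1(a)\oplus h_1(b)\bigr)\oplus\bigl(h_2(a)\oplus h_2(b)\bigr)=\bigl(h_1(a)\oplus h_2(a)\bigr)\oplus\bigl(h_1(b)\oplus h_2(b)\bigr)$, so $h_1+h_2\in\End(S)$; similarly $(h_1\odot h_2)(\n)=h_2(h_1(\n))=\n$ and $(h_1\odot h_2)(a\oplus b)=h_2\bigl(h_1(a)\oplus h_1(b)\bigr)=h_2(h_1(a))\oplus h_2(h_1(b))$, so $h_1\odot h_2\in\End(S)$. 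I would record at this point that only the additive monoid $(S,\oplus,\n)$ is ever used, and that no endomorphism is asked to respect $\otimes$ --- this is exactly the ``in particular'' clause in the statement.

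I would then dispatch axioms (1) and (2). That $(H,+,0_H)$ is a commutative monoid is immediate, since associativity, commutativity, and the identity law for $+$ all hold pointwise, inherited from $(S,\oplus,\n)$. That $(H,\odot,\id_S)$ is a monoid follows from associativity of ordinary function composition --- both $(h_1\odot h_2)\odot h_3$ and $h_1\odot(h_2\odot h_3)$ send $a$ to $h_3(h_2(h_1(a)))$ --- together with the fact that $\id_S$ is a two-sided unit for $\odot$ irrespective of the order convention, and that $\id_S$ and $0_H$ visibly lie in $\End(S)$.

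Next come distributivity and annihilation, axioms (3) and (4). Left distributivity $h\odot(h_1+h_2)=(h\odot h_1)+(h\odot h_2)$ is purely formal: both sides send $a$ to $h_1(h(a))\oplus h_2(h(a))$, using no property of $h$ at all. Right distributivity $(h_1+h_2)\odot h=(h_1\odot h)+(h_2\odot h)$ is the single place where additivity of an endomorphism is essential: the left side sends $a$ to $h\bigl(h_1(a)\oplus h_2(a)\bigr)$, which equals $h(h_1(a))\oplus h(h_2(a))$ precisely because $h$ preserves $\oplus$. For annihilation, $(0_H\odot h)(a)=h(0_H(a))=h(\n)=\n$ uses $h(\n)=\n$ (part of being an additive homomorphism, \Cref{def:semi-ring-homomorphism}), whereas $(h\odot 0_H)(a)=0_H(h(a))=\n$ needs nothing. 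This exhausts \Cref{defn:semi-ring}.

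Finally, for the non-commutativity assertion it suffices to display one commutative semi-ring $S$ with $\End(S)$ non-commutative. I would take $S=\N\times\N$ with componentwise addition and multiplication, which is commutative. An additive endomorphism of the underlying additive monoid $(\N\times\N,+,(0,0))$ is determined by its values on the generators $(1,0)$ and $(0,1)$, and any such pair of values is admissible, so $\End(S)$ is in bijection with $2\times2$ matrices over $\N$; under $\odot$ this bijection is an isomorphism onto the matrix semi-ring $\Mat_2(\N)$ of \Cref{def:matrix-semi-ring}, and matrix multiplication in $\Mat_2(\N)$ is not commutative (the matrix units $E_{12}$ and $E_{21}$ already fail to commute). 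I do not anticipate a genuine obstacle here: the whole argument is bookkeeping, and the only thing needing attention is keeping straight which axioms truly use additivity of the endomorphisms --- right distributivity and one half of the annihilation law --- versus which are formal consequences of function composition.
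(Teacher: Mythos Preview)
Your verification is correct and complete. The paper itself does not supply a proof of this lemma: it is stated as an ``elementary observation'' with a citation to \cite{Baras2010}, so there is no approach to compare against, and your direct axiom check is exactly what is needed.

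One cosmetic point in your non-commutativity example: with the convention $(h_1\odot h_2)(a)=h_2(h_1(a))$, the bijection $h\mapsto A$ (where $h(v)=Av$ on column vectors) satisfies $h_1\odot h_2\mapsto A_2A_1$, so strictly speaking you obtain an \emph{anti}-isomorphism with $\Mat_2(\N)$ rather than an isomorphism. This does not affect the conclusion---$E_{12}$ and $E_{21}$ still witness non-commutativity---but you may wish either to switch to row vectors or simply exhibit the two non-commuting endomorphisms directly.
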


Below is the proof of Theorem \ref{thm:boolean-curry}, which we restate for convenience.

\begin{theorem}\label{thm:boolean-curry-appendix}
The semi-ring of matrix TVGs $\,\Mat_n(\powR)$ is isomorphic to the semi-ring of functions from $\R$ to $\Mat_n(\Bool)$.
This isomorphism is witnessed by the homomorphism
\[
    \Psi: \Mat_n(\powR) \to \Fun(\R,\Mat_n(\Bool)), \quad \text{where} \quad \Psi(M)(t)=\snap_t(M)
\]
is the snapshot of $M$ at time $t$.
\end{theorem}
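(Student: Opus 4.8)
The plan is to realize $\Psi$ as a composite of three semi-ring isomorphisms, two of which are immediate and one of which---the only step carrying genuine content---is a routine ``interchange'' check.

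First, \Cref{rmk:boolean-subsets} already supplies a semi-ring isomorphism $\Phi\colon\powR\to\Fun(\R,\Bool)$ sending a subset $A\subseteq\R$ to its indicator function $1_A$. Second, I would invoke the functoriality of the matrix construction: applying an additive homomorphism (\Cref{def:semi-ring-homomorphism}) $h\colon S\to T$ entrywise to an $n\times n$ matrix yields an additive homomorphism $\Mat_n(S)\to\Mat_n(T)$, and when $h$ is moreover multiplicative and unital the induced map is too, because the multiplication formula $(MN)_{ij}=\bigoplus_k M_{ik}\otimes N_{kj}$ involves only \emph{finite} sums and products and the neutral matrices are built from $\n$ and $\e$. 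Since $\Phi$ is an isomorphism, the induced entrywise map $\Phi_*\colon\Mat_n(\powR)\to\Mat_n(\Fun(\R,\Bool))$ is one as well. Third, there is the interchange isomorphism
\[
\Theta\colon \Mat_n(\Fun(\R,\Bool))\;\longrightarrow\;\Fun(\R,\Mat_n(\Bool)),\qquad (f_{ij})_{ij}\;\longmapsto\;\bigl(t\mapsto(f_{ij}(t))_{ij}\bigr),
\]
whose inverse reads off the $(i,j)$-entry functions of a matrix-valued function. Composing, $\Theta\circ\Phi_*$ sends a matrix TVG $M$ to the function $t\mapsto(1_{M_{ij}}(t))_{ij}$; since $1_{M_{ij}}(t)=\top$ exactly when $t\in M_{ij}$, i.e., exactly when $(tIM)_{ij}=\{t\}$, this is precisely $t\mapsto\snap_t(M)$ by \Cref{def:snapshot}. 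Hence $\Theta\circ\Phi_*=\Psi$ and the theorem follows.

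The substantive step is checking that $\Theta$ is a semi-ring isomorphism. Bijectivity is clear since the two directions are visibly mutually inverse, and additivity holds because both $(f_{ij})+(g_{ij})$ and the pointwise sum have $(i,j)$-entry $t\mapsto f_{ij}(t)\vee g_{ij}(t)$. Multiplicativity is the crux: the $(i,j)$-entry of the product in $\Mat_n(\Fun(\R,\Bool))$ is the function $t\mapsto\bigvee_k\bigl(f_{ik}(t)\wedge g_{kj}(t)\bigr)$, while the value at $t$ of the product in $\Fun(\R,\Mat_n(\Bool))$ is the Boolean matrix product of $(f_{ik}(t))_{ik}$ and $(g_{kj}(t))_{kj}$, whose $(i,j)$-entry is again $\bigvee_k\bigl(f_{ik}(t)\wedge g_{kj}(t)\bigr)$. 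Preservation of neutral elements is a direct comparison: the matrix with constant-$\top$ diagonal and constant-$\bot$ off-diagonal maps to the constant function at the identity Boolean matrix, and the all-constant-$\bot$ matrix to the constant function at the zero Boolean matrix.

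I expect the main obstacle to be purely organizational: three superficially identical-looking semi-rings occur in the argument, and the real work is to keep them apart and to observe that evaluation at a point commutes with the finite sums defining the matrix product---no convergence or cardinality issue arises since $n$ and $\Bool$ are finite. As a byproduct this argument also justifies the claim of \Cref{def:snapshot} that each $\snap_t$ is a semi-ring homomorphism, being the composite of $\Psi$ with evaluation at $t$. A more hands-on alternative would be to define the inverse of $\Psi$ directly as $f\mapsto M$ with $M_{ij}=\{t\mid f(t)_{ij}=\top\}$ and verify the semi-ring axioms for $\Psi$ by hand, the product axiom reducing to the equivalences $t\in\bigcup_k(M_{ik}\cap N_{kj})\iff\exists k\,(t\in M_{ik}\text{ and }t\in N_{kj})\iff(\snap_t(M)\,\snap_t(N))_{ij}=\top$.
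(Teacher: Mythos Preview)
Your proof is correct, but it is organized differently from the paper's. The paper takes precisely the hands-on route you sketch at the end as an ``alternative'': it fixes notation for $M\mapsto m$ via the defining property $t\in M_{ij}\iff m(t)(i,j)=\top$, then verifies preservation of neutral elements, addition, and multiplication by direct logical equivalences, the multiplicative case being the chain $t\in\bigcup_k(M_{ik}\cap N_{kj})\iff\bigvee_k m(t)(i,k)\wedge n(t)(k,j)=\top\iff(m\otimes n)(t)(i,j)=\top$.

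Your factorization $\Psi=\Theta\circ\Phi_*$ through the intermediate semi-ring $\Mat_n(\Fun(\R,\Bool))$ is genuinely different. It buys you modularity: the entrywise lift $\Phi_*$ is automatic from functoriality of $\Mat_n(-)$, and the interchange $\Theta$ isolates exactly where the finiteness of $n$ is used (so that evaluation at $t$ commutes with the finite Boolean sum defining the matrix product). It also makes bijectivity explicit, as a composite of bijections---something the paper leaves implicit in its characterizing biconditional. The paper's direct approach, by contrast, is shorter and avoids naming the intermediate object, at the cost of doing essentially the same unwinding twice (once for $\oplus$, once for $\otimes$). Either argument is complete; yours makes the structural reason for the isomorphism more visible.
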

\begin{proof}
$\Psi$ preseves neutral elements. The identity TVG matrix $\R I$, with diagonal entries $\R$, is sent to the function $1$ that assigns to each $t$ the identity Boolean matrix $\top I$. The TVG matrix of empty sets $\varnothing I$ is sent to the function $0$ with constant value $\bot I$.

To establish clear notation, we will write TVG matrices with capital letters like $M$ and $N$ and use $M+N$ and $MN$ for matrix addition and multiplication.
To emphasize that the entries of these matrices are subsets, we use union and intersection to emphasize the operations occuring in $\powR$, i.e.,
\[
(M+N)_{ij}=M_{ij}\cup N_{ij} \quad \text{and} \quad (MN)_{ij}=\bigcup_k M_{ik}\cap N_{kj}.
\]
In the semi-ring $\Fun(\R,\Mat_n(\Bool))$ we will use lower case letters like $m$ and $n$ to stand for functions. 
We write $m(t)$ for the matrix of Booleans at time $t$, whose truth value in entry $(i,j)$ is denoted $m(t)(i,j)$. 
Pointwise addition of functions is written $(m\oplus n)(t)=m(t)+n(t)$, where we re-use the $``+''$ operator for addition of Boolean matrices. The truth value of $(m\oplus n)(t)(i,j)$ is determined by $m(t)(i,j)\vee n(t)(i,j)$.
Multiplication in $\Fun(\R,\Mat_n(\Bool))$ is pointwise, i.e., $(m\otimes n)(t)=m(t)n(t)$, where again we use matrix mutliplication notation. The truth value of $(m\otimes n)(t)(i,j)$ is determined by $\vee_k m(t)(i,k)\wedge n(t)(k,j)$.
To alleviate notation slightly, we will write $\Psi(M)=m$, which is the unique function that satisfies the property 
\[
    t\in M_{ij} \iff m(t)(i,j)=\top.
\]

To see that the assignment $\Psi:M \mapsto m$ preserves addition, note that $t \in (M+N)_{ij}$ iff $t\in M_{ij}$ or $t\in N_{ij}$, which is true iff
\[
m(t)(i,j)\vee n(t)(i,j)=(m(t) + n(t))(i,j)=(m\oplus n)(t)(i,j)=\top.
\]
Similarly, to see that the assignment $\Psi:M \mapsto m$ preserves multiplication, note that
\begin{eqnarray*}
t\in (MN)_{ij} & \iff & t\in \bigcup_k M_{ik}\cap N_{kj} \\
& \iff & \top = \bigvee_k m(t)(i,k)\wedge n(t)(k,j) \\
& \iff & \top = (m(t)n(t))(i,j) \\
& \iff & \top = (m\otimes n)(t)(i,j).
\end{eqnarray*}
\end{proof}

We now state and prove Proposition \ref{prop:Kleene-star-diam}.

\begin{proposition}[Convergence at the Temporal Diameter]\label{prop:Kleene-star-diam-appendix}
Let $A$ be the adjacency matrix for a simple TVG $\cG = (G, \ell_M)$.
The $(i,j)^{th}$ entry of the $k$-cumulant $C_k(A)$ stabilizes after $d_{ij}$, where
\[
d_{ij}= \max_{t \mid \exists \gamma:i\rightsquigarrow j} \min_{\gamma} |\gamma|
\]
is the length of the longest shortest path from $i$ to $j$, disregarding times where no such path exists. 
We set $d_{ii}=0$, by convention.
We define the \define{temporal diameter} of a TVG $\cG$ to be
\[
\diam (\cG)=\max_{ij} d_{ij}.
\]
Consequently, the Kleene star $A^*$ convergences for $r\geq \diam (\cG)$.
\end{proposition}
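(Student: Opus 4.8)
The plan is to push the statement through the snapshot isomorphism of \Cref{thm:boolean-curry} and then invoke the classical transitive-closure dictionary for static directed graphs. Let $\Psi:\Mat_n(\powR)\xrightarrow{\ \sim\ }\Fun(\R,\Mat_n(\Bool))$ be that isomorphism, so that $\Psi(A)(t)=\snap_t(A)$ is the Boolean adjacency matrix of the snapshot digraph at time $t$. Since $\Psi$ is a homomorphism of semi-rings it commutes with finite sums and with powers, so evaluating $\Psi(C_k(A))$ at $t$ gives $C_k(\snap_t A)=I+\snap_t(A)+\cdots+\snap_t(A)^k$. The first step of the proof is therefore the tautology
\[
t\in \bigl(C_k(A)\bigr)_{ij}\iff \bigl(C_k(\snap_t A)\bigr)_{ij}=\top .
\]

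Second, I would record the elementary fact about a Boolean adjacency matrix $B$ of a digraph on $n$ nodes: $(B^{\ell})_{ij}=\top$ exactly when there is a walk of length $\ell$ from $i$ to $j$ (the reading recalled around \Cref{fig:graph-cartoons}), so $(C_k(B))_{ij}=\bigvee_{\ell\le k}(B^\ell)_{ij}$ is $\top$ iff there is a walk of length $\le k$ from $i$ to $j$, i.e. iff $k\ge \delta_{ij}(B)$, where $\delta_{ij}(B)$ denotes the length of a shortest $i\rightsquigarrow j$ walk (with $\delta_{ii}=0$ and $\delta_{ij}=\infty$ when $j$ is unreachable from $i$). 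Applying this with $B=\snap_t(A)$ and combining with the first step yields
\[
\bigl(C_k(A)\bigr)_{ij}=\{\,t\in\R:\ \delta_{ij}(\snap_t A)\le k\,\},
\]
a chain of subsets of $\R$ that is non-decreasing in $k$ and whose union over all $k$ is $\{t:\delta_{ij}(\snap_t A)<\infty\}=(A^*)_{ij}$.

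Third, I would identify $d_{ij}$. Unwinding \Cref{def:snapshot} and \Cref{defn:adjacency-matrix-TVG}, a walk $\gamma:i\rightsquigarrow j$ that is alive at time $t$ is precisely a walk from $i$ to $j$ in the snapshot digraph, so the quantity in the statement is $d_{ij}=\max\{\delta_{ij}(\snap_t A):t\in\R,\ \delta_{ij}(\snap_t A)<\infty\}$ (and $d_{ij}=0$ in the degenerate case that $j$ is never reachable from $i$, consistent with the stated convention and forcing $(C_k(A))_{ij}=\varnothing$ for all $k$). Because a shortest walk in an $n$-node digraph is a simple path, every finite $\delta_{ij}(\snap_t A)$ lies in $\{0,1,\dots,n-1\}$, so the maximum ranges over a finite set and is attained. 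Hence for all $k\ge d_{ij}$ the displayed set already equals its eventual value $(A^*)_{ij}$, so the $(i,j)$ entry of $C_k(A)$ is constant for $k\ge d_{ij}$; moreover, if $d_{ij}\ge 1$, any $t^\ast$ realizing the maximum witnesses $(C_{d_{ij}-1}(A))_{ij}\subsetneq (C_{d_{ij}}(A))_{ij}$, so the threshold is sharp.

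Finally, the consequence is immediate: since $\diam(\cG)=\max_{ij}d_{ij}\ge d_{ij}$ for every pair $(i,j)$, taking $r\ge\diam(\cG)$ makes every entry of $C_r(A)$ stable, so $C_r(A)=I+A+A^2+\cdots=A^*$, which is exactly the Kleene star converging at radius $r$. I do not expect a genuine obstacle; the step most prone to slips is the third one — correctly matching the combinatorial $d_{ij}$ of the statement with $\sup_t \delta_{ij}(\snap_t A)$, confirming the supremum is a maximum because shortest walks have length below $n$, and checking the diagonal and never-reachable cases against the stated conventions — but this is bookkeeping rather than real difficulty, since all the content is carried by the snapshot isomorphism together with the classical $C_k$-versus-shortest-walk correspondence.
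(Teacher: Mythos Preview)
Your proposal is correct and follows essentially the same approach as the paper: both push the statement through the snapshot isomorphism of \Cref{thm:boolean-curry}, reduce to the classical fact that $(C_k(B))_{ij}=\top$ in the Boolean semi-ring iff there is a walk of length $\le k$ from $i$ to $j$, and then observe that shortest walks in an $n$-node digraph have length at most $n-1$ so the defining maximum is attained. Your write-up is in fact slightly more careful than the paper's about the never-reachable case and the sharpness of the threshold $d_{ij}$, but the underlying argument is the same.
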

\begin{proof}
By Remark \ref{rmk:matrix-powers}, the question of whether or not an entry of $C_k(A)$ stabilizes at some value of $k$ is equivalent to asking if that entry of $M^k=(I+A)^k$ stabilizes at $k$.
Moreover, by Theorem \ref{thm:boolean-curry}, we can faithfully recover the values of $M^k$ by considering the associated Boolean function $m^k:V\times V\times \R \to \Bool$, when is $\Psi(M^k)=\Psi(M)^k$.
This is the logical \texttt{OR} (union) of $a^p$, which is $\Psi(A^p)=\Psi(A)^p$, across all $p=1,\dots,k$ along with the identity relation, thus it suffices to characterize what nodes are related at time $t$ using $a^p(t):V\times V \to \Bool$.
For $p=1$, this is simple: $a(i,j,t)=1$ if there is an edge from $i$ to $j$ at time $t$.
By direct inspection of matrix multiplication, one can see that $a^2(i,j,t)=1$ if and only if there is a length-2 walk at time $t$.
More generally, $a^p(i,j,t)=1$ if and only if there is a length-$p$ walk from $i$ to $j$ at time $t$. If $i$ and $j$ are connected by some walk, then $d_{ij}$ is defined to be the maximum over all $t\in\R$. 
Since the maximum diameter of any connected component of a graph on $|V|$-vertices is $|V|-1$, and $a^p(i,j,\bullet):\R \to \Bool$ is piece-wise constant on finitely many pieces (by virtue of $\subR$ rather than $\powR$), this maximum necessarily exists.
\end{proof}

We now prove Proposition \ref{prop:UCS-sub-quot}.

\begin{proposition}\label{prop:UCS-sub-quot-appendix}
Let $\mathcal C$ denote the Universal Contact Semi-Ring (Definition \ref{def:UCS}). $\mathcal C$ contains 
    \begin{enumerate}
        \renewcommand{\theenumi}{(\alph{enumi})}
        \item an injective image of the boolean semi-ring $S$;
        \item a sub-semi-ring which is isomorphic to the TVG semi-ring;
        \item a sub-semi-ring which surjects onto the tropical semi-ring $\mathbb T$;
        \item a sub-semi-ring which surjects onto the propagation delay semi-ring ; and
        \item a sub-semi-ring which surjects onto the function endomorphism semi-ring $\mathbb W$.
    \end{enumerate}
\end{proposition}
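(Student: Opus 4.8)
The plan is to realize each target semi-ring by an explicit ``delay-set--valued'' encoding of its elements as maps in $\mathcal C$, to check directly from \Cref{def:UCS} that the encoding is compatible with $\oplus$ and $\odot$, and --- when the encoding is not unital or not injective --- to pass to the sub-semi-ring it generates inside $\mathcal C$ together with a surjection back onto the target. Items (a) and (b) are immediate. For (a), send $\bot\mapsto 0_{\mathcal C}$ and $\top\mapsto 1_{\mathcal C}$ and observe that $\{0_{\mathcal C},1_{\mathcal C}\}$ is closed under both operations. For (b), send a subset $S\subseteq\R$ to $\Phi_0(S)\in\mathcal C$ with $\Phi_0(S)(t)=\{0\}$ for $t\in S$ and $\varnothing$ otherwise; a one-line computation gives $\Phi_0(S)\oplus\Phi_0(T)=\Phi_0(S\cup T)$ and $\Phi_0(S)\odot\Phi_0(T)=\Phi_0(S\cap T)$, with $\Phi_0(\varnothing)=0_{\mathcal C}$ and $\Phi_0(\R)=1_{\mathcal C}$, so $\{\Phi_0(S)\}$ is a sub-semi-ring of $\mathcal C$ isomorphic to $\powR$.

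\textbf{Items (c) and (d).} For (c), encode $a\in\mathbb T$ as the constant map $g_a(t)=[a,\infty)$ (with $g_\infty=0_{\mathcal C}$); since a union of rays reindexes the left endpoint by a minimum, $g_a\oplus g_b=g_{\min\{a,b\}}$, and by the Minkowski-sum formula together with monotonicity of $x\mapsto[b+x,\infty)$ one gets $g_a\odot g_b=g_{a+b}$, so $a\mapsto g_a$ is an isomorphism onto the sub-semi-ring $\{g_a\}$ (whose multiplicative unit is $g_0$; if one insists on containing $1_{\mathcal C}$, enlarge to the sub-semi-ring generated by $\{g_a\}\cup\{1_{\mathcal C}\}$ and extend the surjection by $1_{\mathcal C}\mapsto 0$). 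For (d), use the encoding $\Phi(I,s)(t)=\{s\}$ for $t\in I$ and $\varnothing$ otherwise from \Cref{ex:store-and-forward}, which satisfies $\Phi(I,s)\odot\Phi(J,t)=\Phi\big(I\cap(J-s),\,s+t\big)$ --- exactly the propagation-delay product --- and $\Phi(I,0)\odot\Phi(\R,s)=\Phi(I,s)$. The propagation-delay semi-ring is generated by the $(I,0)$ (a copy of $\powR$) together with the $(\R,s)$ (whose sub-semi-ring surjects onto $([0,\infty],\max,+)$ via $S\mapsto\max S$, once its image is represented by the constant maps $t\mapsto S$ with $S\subseteq[0,\infty)$ finite), so we take $R_{\mathrm{PD}}\subseteq\mathcal C$ to be the sub-semi-ring generated by $\{\Phi(I,0)\}\cup\{\Phi(\R,s)\}$ and must check that the assignment on generators extends to a surjective homomorphism $R_{\mathrm{PD}}\to\subR\times[0,\infty)$; equivalently, that the propagation-delay semi-ring is a quotient of $R_{\mathrm{PD}}$.

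\textbf{Item (e).} Encode $w\in\mathbb W$ first by its (right-continuous) generalized inverse $w^{\leftarrow}(s)=\inf\{t:w(t)\ge s\}$ and then by the ray-valued map $\Theta(w)(t)=(-\infty,\,w^{\leftarrow}(t)-t]$. The point is that the two order-reversals cancel: for the ray-encoding $\bar F_a(t)=(-\infty,a(t)-t]$ one has $\bar F_a\odot\bar F_b=\bar F_{b\circ a}$ and $\bar F_a\oplus\bar F_b=\bar F_{\max\{a,b\}}$, while $(w\otimes v)^{\leftarrow}=(w\circ v)^{\leftarrow}=v^{\leftarrow}\circ w^{\leftarrow}$ and $(w\oplus v)^{\leftarrow}=\max\{w^{\leftarrow},v^{\leftarrow}\}$, so $\Theta$ respects both $\oplus$ and $\otimes$ and sends the constant-$\infty$ element of $\mathbb W$ to $0_{\mathcal C}$. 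One then takes $R_{\mathbb W}$ to be the sub-semi-ring generated by $\Theta(\mathbb W)\cup\{1_{\mathcal C}\}$ and lets the surjection be $\Theta^{-1}$ on the image, extended over the unit --- after checking that $\Theta$ is injective, i.e., that a map in $\mathbb W$ is recovered from its generalized inverse (true for left-continuous $w$, and in general after quotienting by the routing-irrelevant identification of $w$ with its left-continuous regularization).

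\textbf{Main obstacle.} The hard part will be (d) and (e). The operations of $\mathcal C$ are pointwise and hence strictly finer than propagation-delay addition --- which ``uniformizes'' the delay across the union of two availability windows --- and than $\mathbb W$'s composition, so the surjections \emph{cannot} be the naive ``keep only the support and the extremal delay'' maps: those already fail to respect $\odot$ (for instance on sums $\Phi(I_1,s_1)\oplus\Phi(I_2,s_2)$ with $s_1\neq s_2$). The real content is therefore to describe the sub-semi-rings $R_{\mathrm{PD}}$ and $R_{\mathbb W}$ explicitly enough to pin down the quotient congruence and verify it is compatible with $\odot$, and, for (e), to establish the generalized-inverse identities and the injectivity of $\Theta$ under the correct continuity conventions.
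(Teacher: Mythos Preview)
Your treatment of (a) and (b) is exactly the paper's. For (c) the paper instead takes the sub-semi-ring of \emph{constant} maps $t\mapsto I$ with $I\subset\R$ finite and surjects via $I\mapsto\min I$ (with $\varnothing\mapsto\infty$); your ray encoding $g_a(t)=[a,\infty)$ also works, but note that your patch ``enlarge by $1_{\mathcal C}$'' is not closed under $\oplus$ when $a>0$ (you get $\{0\}\cup[a,\infty)$), whereas the paper's finite-set choice already contains $1_{\mathcal C}=t\mapsto\{0\}$.

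The real divergence is in (d) and (e). You try to \emph{generate} a sub-semi-ring from an embedding (the $\Phi(I,s)$, resp.\ the $\Theta(w)$ via generalized inverse) and then hunt for the induced congruence; you correctly flag that as the hard step and do not carry it out. The paper goes in the opposite direction: it \emph{starts} from a large explicit sub-semi-ring and writes the surjection in one line. For (e), take all $\varphi$ with $\varphi(t)\subset[0,\infty)$ finite satisfying (i) $t\mapsto\min\varphi(t)+t$ nondecreasing on $\{\varphi\ne\varnothing\}$ and (ii) $\varphi(t_1)=\varnothing\Rightarrow\varphi(t_2)=\varnothing$ for $t_2\ge t_1$; the surjection is $\varphi\mapsto\bigl(t\mapsto\min\varphi(t)+t\bigr)$. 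Condition (i) is exactly what forces the minimum in $(\varphi\odot\psi)(t)=\bigcup_{x\in\varphi(t)}(\psi(x+t)+x)$ to be attained at $x=\min\varphi(t)$, making the map multiplicative, and $1_{\mathcal C}$ lies in this sub-semi-ring automatically. This is considerably more direct than your generalized-inverse/left-ray route and avoids the continuity and unit issues you mention.

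For (d) the paper takes the sub-semi-ring of $\varphi$ with $\varphi(t)\subset\R^+$ finite and $\sup\bigcup_t\varphi(t)<\infty$, and sends $\varphi\mapsto(\mathrm{supp}\,\varphi,\ \sup\bigcup_t\varphi(t))$. Notably, the paper itself italicizes that this is only an \emph{additive} homomorphism --- so your diagnosis that the ``support plus extremal delay'' map fails to respect $\odot$ on sums with different delays is correct, and the multiplicative part of (d) is not established in the paper either.
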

    \begin{proof}

        \begin{enumerate}
        \renewcommand{\theenumi}{(\alph{enumi})}
            \item Consider the map $S \to \mathcal C$, that sends $0 \mapsto \left(t \mapsto \emptyset\right)$ and $1 \mapsto \left(t \mapsto \{0\}\right)$. It is clear that the map is a injective semi-ring homomorphism, hence the claim follows.
            \item On the UCS, consider the sub-semi-ring that consists of maps 
                \[ 
                    t \mapsto 
                    \begin{cases} 
                        \{0\} & \text{ for }  t \in I, \\ 
                        \emptyset & \text{ otherwise, } 
                    \end{cases}\]
                where $I$ is a finite union of closed, disjoint intervals of $\R$. 
                This sub-semi-ring is isomorphic the TVG semi-ring via the map which sends the above maps to the lifetime $I$.
            \item Let 
                $\left\{ t \mapsto I, I \text{ is finite} \right\}$ be a collection of constant maps in $\mathcal C$ that map to finite subset of $\R$. 
                This collection forms a sub-semi-ring of $\mathcal C$ which surjects onto the tropical semi-ring, via the semi-ring homomorphism:
                \[  (t \mapsto I)\mapsto \begin{cases}\min(I), &  I \ne \emptyset \\ \infty, & \text{ otherwise.}\end{cases} \]
            \item Consider a collection $\left\{\varphi:\R \to 2^\R \right\}\cup\left\{0_{\mathcal C}: t \mapsto \emptyset\right\} \subseteq \mathcal C$, where $\varphi(t)$ is a finite subset of $\R^+$ for all $t\in \R$ such that $\sup\left\{\bigcup_{t \in \R} \varphi(t)\right\} < \infty$.
                This collection is a sub-semi-ring of $\mathcal C$ and surjects onto the Propagation Delay semi-ring via the \textit{additive} homomorphism that sends:
                          \[ \left(\varphi: \R \to 2^\R \right) \mapsto (K,m),\]
                where $K = \left\{t : \varphi(t) \ne \emptyset\right\}$ and $m = \sup\left\{\bigcup_{t \in \R} \varphi(t)\right\}$.
            \item Similar to the tropical semi-ring case, consider the collection of maps $\left\{\varphi \right\}$ in $\mathcal C$ that satisfy
                    \begin{itemize}
                        \item $\varphi(t)$ is a finite subset of $[0, \infty)$ for each $t$,
                            \item If $t_1 \le t_2$ and $\varphi(t_1), \varphi(t_2)$ are not empty, then $\min(\varphi(t_2)) + t_2 \ge \min(\varphi(t_1)) + t_1$, and
                            \item If $t_1 \le t_2$ and $\varphi(t_1) = \emptyset$ then $\varphi(t_2) = \emptyset$.
                     \end{itemize}
    This collection is a sub-semi-ring of $\mathcal C$, and surjects onto $\mathbb W$ via the semi-ring homomorphism:
    \[ \varphi \mapsto \left(t \mapsto \begin{cases}\min(\varphi(t)) + t & \varphi(t) \ne \emptyset\\ \infty & \varphi(t) = \emptyset\end{cases}\right). \]
        \end{enumerate}
    \end{proof}

\section{Basic Category Theory}\label{sec:basic-category-theory}

A \define{category} $\mathcal{C}$ consists of a collection of \emph{objects}, $\text{Ob}, \C$ and a collection of \emph{morphisms}, $\hom (x, y)$, for every $x, y$ in $\text{Ob} \C$, such that
\begin{itemize}
        \item if $f$ in $\hom (x, y)$ and g $\hom (y, z)$, then $g \circ f$ in $\text{hom}(x, z)$,
        \item for each $x$ in $\text{Ob} \C$, there exists a morphism $\id_x$ in $\text{hom}(x, x)$, 
        \item for $f$ in $\hom (x, y)$, $f\circ \id_x = \id_x \circ f$, and
        \item composition is associative: $(f \circ g) \circ h = f \circ (g \circ h)$.
\end{itemize}
A \define{functor} $\mathcal F$ between categories $\C$ and $\D$ consists of
\begin{itemize}
    \item an object $F(x) \in \D$ for each object $x \in \C$, 
    \item a morphism $F(f) \in \hom(F(x),F(y))$ for each morphism $f\in \hom(x, y)$, such that
    \begin{itemize}
        \item $F$ respects the composition rule in $\C$ and $\D$:
        \[F(f\circ g) = F(f)\circ F(g), \]
        \item for each object $x$ in $\C$, $F(\id_x) = \id_{F(x)}$.
    \end{itemize}
\end{itemize}

\section{Barcodes and Distances on These}\label{sec:barcode-review}
A \define{barcode} $\mathcal B$ is a multiset of intervals $\{I\}$ in $\R$. 
A matching $\sigma$  between barcodes $\mathcal B$ and $\mathcal C$ is defined as a partial bijection between 
 $\mathcal{B}' \subset \mathcal{B}$,and $\mathcal{C}' \subset \mathcal{C}$:
 \[\sigma: \mathcal{B'} \to \mathcal{C'}.\]
Given a $p \in [1,\infty]$ we assign a\define{matching $p$-cost} of $\sigma$ as follows
\[
\cost{\sigma, p} = \left\{
\begin{aligned}
        \left( \sum\limits_{\substack{ I \in \mathcal B, \\ \sigma(I) = J }}{\norm{I - J}_p^p + }  \sum\limits_{I \in \Delta}{\norm{I - \Mid(I)}}_p^p \right)^{1/p}, & \text{ when } 1 \leq p < \infty \\
        \max\left\{ \max\limits_{\substack{ I \in \mathcal B, \\ \sigma(I) = J }}{\norm{I - J}_\infty, \max\limits_{I \in \Delta }{\norm{I - \Mid(I)}}_\infty } \right\}, & \text{ when } p = \infty,
\end{aligned}
\right \},
\]
where $\norm{I-J}_p$ is the $\ell^p$-norm between intervals $I = [a, b)$ and $J=[c,d)$ viewed as vectors $(a, b)$ and $(c,d)$ in $\R^2$, $\Mid(I) := \left[\tfrac{a+b}{2}, \tfrac{a+b}{2}\right)$ is the empty interval at the midpoint of $I$, and $\Delta$ denotes all the intervals unmatched by $\sigma$ in $\mathcal B \sqcup \mathcal C$.
The \define{Wasserstein $p$-distance} between barcodes $\mathcal B$ and $\mathcal C$ is then defined as the infimum of $p$-costs over all possible matchings, i.e.,
\[
d^p_{W}(\mathcal B, \mathcal C) = \inf\limits_{\sigma}{\cost{\sigma, p}}.
\]
The distance $d^\infty_{W}$ is called the \define{bottleneck distance}, denoted as $d_B$.

\section{Proof of Isometry Theorems}\label{sec:isometry-theorem-appendix}



We now prove Theorem \ref{thm:isometries}.

\begin{theorem}[Isometry Theorems]\label{thm:isometries-appendix}
Suppose $\cG=(G,\ell_M)$ and $\cG'=(G,\ell_N)$ are two TVGs with the same underlying graph.
We can view $\mathcal{E}_N$ and $\boldsymbol{\iota}\circ \mathcal{E}_N$ as functors to $\sub(G)$ and $\mon(G)$, respectively.
Any ordering of the nodes in $G$ then determines two isometries
    \[
    d_{H} (M, N) = d_{I} (\mathcal{E}_M, \mathcal{E}_N) \quad \text{and} \quad d_{\Sigma H} (M, N) = d_{I} (\boldsymbol{\iota}\circ\mathcal{E}_M, \boldsymbol{\iota}\circ\mathcal{E}_N),
    \] 
where $M$ and $N$ are the lifetime matrices for $\cG=(G,\ell_M)$ and $\cH=(H,\ell_N)$, respectively.

\begin{proof}
Suppose $G$ has $n$ vertices and fix a labeling for them $\{1, \ldots, n\}$.

We first show 
$d_{\Sigma H}(M, N) \leq d_I(\iota \cdot \mathcal E_M, \iota \cdot \mathcal E_N)$.
Suppose $(\varphi, \psi)$ is a $\varepsilon$-interleaving, by taking colimits we can
see that $\varphi$ corresponds to some permutation $\sigma \in \Pi_n$.
We want to show that for all $i, j$, we have 
$M_{ij} \subseteq N_{\sigma(i) \sigma(j)}^\varepsilon$ and 
$N_{\sigma(i) \sigma(j)} \subseteq M_{ij}^\varepsilon$.
If $[i j] \in \mathcal E_M(I)$ then $\varphi_I$ guaranteees 
$[{\sigma (i)} {\sigma (j)}] \in \mathcal E_N (I^\varepsilon)$.
In particular, there exists $t \in I$ such that $t \in M_{ij}$. 
Since $\varphi_I$ guarantees that $[{\sigma (i)} {\sigma (j)}] $ exists in $\mathcal E_N (I^\varepsilon)$,
there exists $t' \in N_{\sigma(i) \sigma(j)}$ with 
$t' \in (t - \varepsilon, t + \varepsilon)$. 
This shows $t \in N_{\sigma(i) \sigma(j)}^\varepsilon$.
A symmetric argument shows the other inclusion.

We now show 
$d_I(\iota \cdot \mathcal E_M, \iota \cdot \mathcal E_N) \leq d_{\Sigma H}(M, N)$.
Let $\sigma$ be a permutation in $\Pi_n$ and consider 
$d_H(M, \sigma(N) ) = \varepsilon$, that is, for all $i, j$, we have 
$M_{ij} \subseteq N_{\sigma(i) \sigma(j)}^\varepsilon$ and 
$N_{\sigma(i) \sigma(j)} \subseteq M_{ij}^\varepsilon$. 
For $I \in \Int$, define $\varphi_I : \mathcal E_M (I) \to \mathcal E_N (I^\varepsilon)$
by $i \mapsto {\sigma(i)}$ and $[i j] \mapsto [{\sigma(i)} {\sigma(j)}]$.
If $e = [i j] \in \mathcal E_M (I)$, then there exists $t \in I$ such that 
$t \in M_{ij}$. But since $M_{ij} \subseteq N_{\sigma(i) \sigma(j)}^\varepsilon$, 
then $t \in N_{\sigma(i) \sigma(j)}^\varepsilon$ which means there exists 
$t' \in N_{\sigma(i) \sigma(j)}$ such that $\vert t - t' \vert < \varepsilon$ which
means that $[{\sigma (i)} {\sigma (j)}] \in \mathcal E_N (I^\varepsilon)$, hence $\varphi_I$ is 
well-defined. A symmetric argument shows $\psi_J$ is also well-defined.

The first isometry follows from the second.
\end{proof}
\end{theorem}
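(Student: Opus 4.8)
The plan is to establish the second isometry, $d_H^{}(M,N)$ aside, namely $d_{\Sigma H}(M,N) = d_I(\boldsymbol{\iota}\circ\mathcal{E}_M,\boldsymbol{\iota}\circ\mathcal{E}_N)$, by a two-sided inequality, and then to recover the first isometry $d_H(M,N) = d_I(\mathcal{E}_M,\mathcal{E}_N)$ as the rigid special case in which the vertex correspondence is forced to be the identity. Throughout, fix a labeling $\{1,\dots,n\}$ of the vertices of $G$ (transported to $H$ through $\Phi$), so that $M$ and $N$ are genuine $n\times n$ lifetime matrices and every summary graph $\mathcal{E}_M(I)$, $\mathcal{E}_N(I)$ is literally a subgraph of the complete graph on these $n$ vertices, with the cosheaf structure maps being the corresponding subgraph inclusions. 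Since the off-diagonal entries of a lifetime matrix are finite unions of closed intervals, it costs nothing to intersect everything with a fixed compact window and assume all lifetimes are bounded.

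For the inequality $d_{\Sigma H}(M,N)\le d_I(\boldsymbol{\iota}\circ\mathcal{E}_M,\boldsymbol{\iota}\circ\mathcal{E}_N)$, suppose $(\varphi,\psi)$ is an $\varepsilon$-interleaving in $\mon(G)$. The first step is to extract a permutation: taking the colimit over $\Int$ of $\boldsymbol{\iota}\circ\mathcal{E}_M$ recovers the underlying graph $\mathcal{E}_M(\R)$ together with its anchor, and likewise for $\cH$, so $\varphi$ induces on colimits a morphism of $\mon(G)$ between the underlying graphs; because morphisms in $\mon(G)$ are monomorphisms (a map $\psi$ with $\varphi''\circ\psi=\varphi'$ and $\varphi'$ injective is itself injective) and the two vertex sets are finite of the same size, the induced vertex map is a bijection $\sigma\in\Pi(n)$, and $\varphi_I$ acts on every $\mathcal{E}_M(I)$ by $i\mapsto\sigma(i)$, $[ij]\mapsto[\sigma(i)\sigma(j)]$, while $\psi_I$ acts by $\sigma^{-1}$. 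The second step is routine: given $t\in M_{ij}$, apply $\varphi$ at the degenerate interval $I=[t,t]$ — the edge $[ij]$ lies in $\mathcal{E}_M([t,t])$, hence $[\sigma(i)\sigma(j)]\in\mathcal{E}_N([t-\varepsilon,t+\varepsilon])$, so $N_{\sigma(i)\sigma(j)}$ meets $[t-\varepsilon,t+\varepsilon]$ and $t\in N_{\sigma(i)\sigma(j)}^{\varepsilon}$; applying $\psi$ symmetrically gives $N_{\sigma(i)\sigma(j)}\subseteq M_{ij}^{\varepsilon}$. Thus $d_H(M,\sigma(N))\le\varepsilon$, hence $d_{\Sigma H}(M,N)\le\varepsilon$, and taking the infimum over interleavings closes this direction.

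For the reverse inequality, let $\sigma\in\Pi(n)$ realize $d_H(M,\sigma(N))\le\varepsilon$, which unwinds to $M_{ij}\subseteq N_{\sigma(i)\sigma(j)}^{\varepsilon}$ and $N_{ij}\subseteq M_{\sigma^{-1}(i)\sigma^{-1}(j)}^{\varepsilon}$ for all $i,j$. Define $\varphi_I\colon\mathcal{E}_M(I)\to\mathcal{E}_N(I^{\varepsilon})$ by $i\mapsto\sigma(i)$, $[ij]\mapsto[\sigma(i)\sigma(j)]$, and $\psi_I$ by $\sigma^{-1}$. Well-definedness is exactly the unwound bound: if $[ij]\in\mathcal{E}_M(I)$ there is $t\in I\cap M_{ij}\subseteq I\cap N_{\sigma(i)\sigma(j)}^{\varepsilon}$, forcing $N_{\sigma(i)\sigma(j)}$ to meet $I^{\varepsilon}$, i.e.\ $[\sigma(i)\sigma(j)]\in\mathcal{E}_N(I^{\varepsilon})$. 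Naturality in $I$ is immediate since the structure maps are inclusions and $\varphi,\psi$ are given by the same formula for all $I$; the two triangle identities — that $\psi_{I^\varepsilon}\circ\varphi_I$ and $\varphi_{I^\varepsilon}\circ\psi_I$ are the structure maps $\mathcal{E}_M(I)\to\mathcal{E}_M(I^{2\varepsilon})$ and $\mathcal{E}_N(I)\to\mathcal{E}_N(I^{2\varepsilon})$ — hold because $\sigma^{-1}\sigma=\sigma\sigma^{-1}=\mathrm{id}$. Minding the usual $\delta$-of-room between the closed-ball Hausdorff definition and the interleaving infimum (work at $\varepsilon+\delta$ and let $\delta\downarrow 0$) produces the desired $\varepsilon$-interleaving, giving $d_I(\boldsymbol{\iota}\circ\mathcal{E}_M,\boldsymbol{\iota}\circ\mathcal{E}_N)\le d_{\Sigma H}(M,N)$.

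The first isometry then follows by running the same two arguments with codomain $\sub(G)$ in place of $\mon(G)$: since $\sub(G)$ is a poset, an $\varepsilon$-interleaving morphism $\mathcal{E}_M(I)\to\mathcal{E}_N(I^{\varepsilon})$ is unique when it exists and must be the subgraph inclusion, so the permutation extracted in the first direction is forced to be the identity and one obtains $M_{ij}\subseteq N_{ij}^{\varepsilon}$; conversely, whenever $d_H(M,N)\le\varepsilon$ the inclusions $\mathcal{E}_M(I)\subseteq\mathcal{E}_N(I^{\varepsilon})$ and $\mathcal{E}_N(I)\subseteq\mathcal{E}_M(I^{\varepsilon})$ are themselves an interleaving. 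I expect the main obstacle to be the first half of the $\le$ direction in the $\mon(G)$ case: making rigorous, through the colimit computation, that an \emph{arbitrary} $\mon(G)$-interleaving is governed by a single permutation $\sigma$ applied uniformly across all intervals $I$, i.e.\ that the vertex correspondence neither drifts with $I$ nor fails to be bijective. Once the vertex map is pinned down, the passage between ``edge alive over $I^{\varepsilon}$'' and ``matrix entry within Hausdorff distance $\varepsilon$'' in both directions is purely mechanical.
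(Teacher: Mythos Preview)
Your proposal is correct and follows essentially the same two-inequality strategy as the paper: extract a permutation $\sigma$ from an $\varepsilon$-interleaving in $\mon(G)$ via colimits to get $d_{\Sigma H}\le d_I$, build the interleaving from $\sigma$ to get $d_I\le d_{\Sigma H}$, and then derive the first isometry as the rigid $\sub(G)$ case. Your version is in fact more thorough than the paper's---you make explicit the use of degenerate intervals $[t,t]$, the naturality and triangle checks, the $\delta$-of-room issue, and the reason the colimit yields a single global permutation---whereas the paper asserts the colimit step in one line and leaves the first isometry as ``follows from the second.''
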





\end{document}